\definecolor{viola}{rgb}{0.3,0,0.7}
\definecolor{ciclamino}{rgb}{0.5,0,0.5}
\definecolor{blu}{rgb}{0,0,0.7}
\definecolor{rosso}{rgb}{0.85,0,0}
\def\pier #1{#1}
\def\takeshi #1{#1}
\renewcommand{\d}{\, {\mathrm d}}
\newcommand{\dx}{\, {\mathrm d} x}
\newcommand{\dg}{\, {\mathrm d} \Gamma}
\newcommand{\ds}{\, {\mathrm d} s}
\newcommand{\dt}{\, {\mathrm d} t}
\def\ke{{\varepsilon,\kappa}}
\def\Gke{{\Gamma,\varepsilon,\kappa}}
\def\kek{{\varepsilon_k,\kappa_k}}
\def\Gkek{{\Gamma,\varepsilon_k,\kappa_k}}
\numberwithin{equation}{section}
\newtheorem{theorem}{Theorem}[section]
\newtheorem{corollary}[theorem]{Corollary}
\newtheorem{lemma}[theorem]{Lemma}
\newtheorem{proposition}[theorem]{Proposition}
\renewenvironment{proof}{\noindent {\bf Proof.}}{\hfill $\Box$}
\title[Asymptotic analysis of problems with dynamic boundary conditions]
{Asymptotic analysis of the Allen--Cahn equation with\\[1mm] dynamic boundary conditions of Cahn--Hilliard type}
\author[P.\ Colli]{Pierluigi Colli}
\address{Pierluigi Colli: Dipartimento di Matematica ``F. Casorati'', Universit\`a di Pavia
 and Research Associate at the IMATI -- C.N.R. Pavia, via Ferrata 5, 27100 Pavia, Italy}
\email{pierluigi.colli@unipv.it}
\author[T.\ Fukao]{Takeshi Fukao}
\address{Takeshi Fukao: Faculty of Advanced Science and Technology, Ryukoku University, 
1-5 Yokotani, Seta Oe-cho, Otsu-shi, Shiga 520-2194, Japan}
\email{fukao@math.ryukoku.ac.jp}
\dedicatory{}
\begin{document}

\thispagestyle{empty}

\begin{abstract} 
\pier{Problems for partial differential equations coupled with dynamic boundary conditions can be viewed as a type of transmission problem between the bulk and its boundary. For the heat equation and the Allen--Cahn equation, various forms of such problems with dynamic boundary conditions are studied in this paper.
In the case of the Cahn--Hilliard equation in the bulk, several models have been proposed in which the boundary equations and conditions differ. Recently, the vanishing surface diffusion limit has been investigated in more than one of these models. In such settings, the resulting dynamic boundary equation typically takes the form of a forward-backward parabolic equation.
In this paper, we focus on a different model, in which the Allen--Cahn equation governs the bulk dynamics, while the boundary condition is of Cahn--Hilliard type. We analyze the asymptotic behavior of the system, including the well-posedness of the limiting problems and corresponding error estimates for the differences between solutions. These aspects are discussed for three types of limiting systems.}
\smallskip

\noindent {\sc Key words:} \pier{Allen--Cahn equation, dynamic boundary conditions of Cahn--Hilliard type, asymptotic analyses, well-posedness, rates of convergence.}
\smallskip

\noindent {\sc Mathematics Subject Classification 2020:} \pier{35K61, 35K35, 35D30, 58J35, 74N20.}
\end{abstract}

\maketitle

\section{Introduction}
\label{intro}
\setcounter{equation}{0}

In the study of time-dependent partial differential equations (PDEs), we distinguish between two types of processes: the \emph{forward process}, which progresses in the positive time direction starting from \( t=0 $, and the \emph{backward process}, which corresponds to evolution in the negative time direction. To illustrate this distinction, consider the classical heat equation. Let \( T > 0 $ and \( \Omega \subset \mathbb{R}^d $ be a bounded domain with smooth boundary \( \Gamma := \partial \Omega $, where \( d \in \mathbb{N} $, \( d \geq 2 $. The heat equation takes the form
\begin{equation*}
	\partial_t u - \Delta u = f \quad \text{in } Q := \Omega \times (0, T),
\end{equation*}
supplemented with suitable boundary and initial conditions:
\begin{alignat*}{2}
	B u &= f_\Gamma && \quad \text{on } \Sigma := \Gamma \times (0,T), \\
	u(0) &= u_0 && \quad \text{in } \Omega,
\end{alignat*}
where \( f: Q \to \mathbb{R} $, \( f_\Gamma: \Sigma \to \mathbb{R} $, and \( u_0: \Omega \to \mathbb{R} $ are given, and \( B $ denotes a boundary operator.
It is well known that the sign of the Laplacian is crucial: reversing the sign renders the problem ill-posed in general. To clarify this, define the transformation \takeshi{$ U(x, t) := u(x,T - t) $}. Then the backward heat equation
\begin{equation*}
	\partial_t u + \Delta u = f \quad \text{in } Q ,
\end{equation*}
with boundary and initial conditions as above, can be reformulated  in terms of $U$ as
\begin{alignat*}{2}
	\partial_t U - \Delta U &= -f && \quad \text{in } Q, \\
	B U &= f_\Gamma && \quad \text{on } \Sigma, \\
	U(T) &= u_0 && \quad \text{in } \Omega.
\end{alignat*}
Here, the former initial condition is replaced with a final condition at time \( T $. This formulation demonstrates that the backward heat equation requires high regularity to data in order to obtain solutions, due to the inherent smoothing effect of the heat operator.

\smallskip

One of the goals of this paper is to investigate the well-posedness of PDE systems that exhibit a backward-like structure in the boundary condition. Specifically, we consider dynamic boundary conditions with a positive surface diffusion term. We focus on boundary conditions of the form
\begin{equation*}
	\partial_t u + \sum_{k=0}^{4} B_k u = f_\Gamma \quad \text{on } \Sigma,
\end{equation*}
where \( B_k $ represents a differential operator of order \( k $. These boundary conditions, which include time derivatives, are referred to as \emph{dynamic boundary conditions}.
In particular, we study a dynamic boundary condition of Cahn--Hilliard type:
\begin{equation*}
	\partial_t u - \Delta_\Gamma \left( \partial_{\boldsymbol{\nu}} u 
	- \kappa \Delta_\Gamma u + \mathcal{W}_\Gamma'(u) - f_\Gamma \right) = 0 
	\quad \text{on } \Sigma,
\end{equation*}
where \( \Delta_\Gamma $ denotes the Laplace--Beltrami operator on the boundary \( \Gamma $ (cf. \cite{Gri09, Heb96}), and \( \partial_{\boldsymbol{\nu}} $ is the outward normal derivative. The function \( \mathcal{W}_\Gamma' $ is the derivative of a double-well potential \( \mathcal{W}_\Gamma $, with typical examples including \( \mathcal{W}_\Gamma'(r) = r^3 - r $ or \( \mathcal{W}_\Gamma'(r) = -r $. In our analysis, we study the asymptotic behavior as the surface diffusion parameter \( \kappa \to 0 $, which corresponds to setting \( B_4 = 0 $ and leads to a boundary condition of forward-backward type. Our main result demonstrates that, despite the apparent ill-posedness of such a formulation, the problem remains well-posed in a weak sense due to the leading third-order term \( B_3 = -\Delta_\Gamma \partial_{\boldsymbol{\nu}} $.

\smallskip

This vanishing surface diffusion limit has been investigated in prior works, including \cite{CF20b, CFS22, CFW20, LW19, Sca19}. In \cite{CFS22}, asymptotic analysis was carried out starting from a Cahn--Hilliard equation with a dynamic boundary condition of the same type, as introduced in \cite{Gal06, GMS11}, leading to third-order boundary dynamics. Extensions of this idea have been pursued in \cite{CFS22b} and \cite{LW24}, based on models from \cite{CFW20, LW19, KLLM21}. All these works involve fourth-order PDEs in the bulk (i.e., the Cahn--Hilliard equation).
Here, we study a model involving the second-order Allen--Cahn equation in the bulk, paired with a Cahn--Hilliard-type dynamic boundary condition. While Allen--Cahn equations with dynamic or Wentzell-type boundary conditions have been investigated before (see, e.g., \cite{CC13, CF15a, GG08, FGGR02}), the combination considered here is, to our knowledge, novel.

\smallskip

The central goal of this paper is to clarify the relationships among four types of problems, beginning with the following system (cf. \cite{CF20}). Let \( \varepsilon, \kappa > 0 $ be asymptotic parameters and consider:
\begin{align*}
{(\text{\rm Laplace equation})} & \
 \begin{cases}
  - \varepsilon \Delta \mu =0 
  &  {\rm  in ~ } Q, \\
     \mu_{|_\Gamma} =\mu_\Gamma
  & {\rm on~} \Sigma, 
 \end{cases}
  \\ 
  {(\text{\rm Allen--Cahn equation})} & \
 \begin{cases}
   \partial _t u - \Delta u + \mathcal{W}'(u) = f
  &  {\rm in~} Q,\\
   u _{|_\Gamma}= u_\Gamma 
  & {\rm on~} \Sigma,\\ 
    u(0) = u_0 &{\rm in~} \Omega,\\
  \end{cases}
  \\
 {(\text{\rm Cahn--Hilliard boundary dynamics})} & \
 \begin{cases}
  \partial _t u_\Gamma + \varepsilon \partial_{\boldsymbol{\nu}} \mu - \Delta_\Gamma \mu_\Gamma = 0
  & {\rm on~} \Sigma, \\ 
  \mu_\Gamma = \partial_{\boldsymbol{\nu}} u - \kappa \Delta _\Gamma u_\Gamma 
  + {\mathcal W}'_\Gamma(u_\Gamma) - f_\Gamma
    & {\rm on~} \Sigma, \\
  u_\Gamma (0) = u_{0\Gamma} & {\rm on~} \Gamma,
  \end{cases}
\end{align*}
Here, $ \mu|_\Gamma $ and $ u|_\Gamma $ denote the traces of $ \mu $ and $ u $ on $ \Gamma $, respectively; $ u_{0\Gamma} : \Gamma \to \mathbb{R} $ is the boundary initial data. Different double-well potentials may be used in the bulk and on the boundary. For clarity, we distinguish between bulk variables $ u, \mu $ and boundary variables $ u_\Gamma, \mu_\Gamma $.

\smallskip

We study three asymptotic regimes: $ \kappa \to 0 $, $ \varepsilon \to 0 $, and the simultaneous limit $ \varepsilon, \kappa \to 0 $. In particular, we note that, when $ \varepsilon = 0 $, the system reduces to the Allen--Cahn equation with a dynamic boundary condition of Cahn--Hilliard type.
The analysis carried out in this paper relies on uniform \emph{a priori} estimates and rigorous 
limiting procedures. The structure of the work is outlined as follows.
Section~\ref{main} introduces the basic functional framework and provides a detailed discussion of 
the target problems. In Section~\ref{asy-ana}, we begin with Subsection~3.1, where we derive uniform 
estimates for the general problem, focusing initially on the limit $\kappa \to 0$ while keeping $
\varepsilon > 0$ fixed. These estimates are inspired by techniques developed in earlier studies of 
the Cahn--Hilliard equation with dynamic boundary conditions, such as~\cite{CF15}.
Subsection~3.2 is devoted to the convergence analysis as $\kappa \to 0$, employing weak formulations and demiclosedness arguments as introduced in~\cite{CF20b, Sca19} and further elaborated 
in~\cite{CFS22, CFS22b}. Subsequently, Subsections~3.3 and~3.4 address the limits $\varepsilon \to 0$ and the simultaneous limit $\varepsilon, \kappa \to 0$, respectively, using similar analytical techniques.
Section~\ref{cont-dep} is concerned with continuous dependence results for the limiting problems, which in turn yield uniqueness of the corresponding solutions. Finally, Section~5 provides error 
estimates for all three limiting regimes, based on higher-order regularity results. In each case, we 
establish convergence rates of order $1/2$ with respect to appropriate norms measuring the differences between solutions.

\section{Functional setting and problem statement}
\label{main}

In this section, \pier{we begin by introducing the functional spaces that will be used throughout the analysis. We also recall several useful tools, including a number of classical inequalities. Subsequently, we review a relevant existence result and provide a discussion of the three limiting problems that will be investigated in the later sections.}

\subsection{Notation and useful tools}

Let $T>0$ and $\Omega \subset \mathbb{R}^d$ be a bounded domain with smooth boundary $\Gamma:=\partial \Omega$, $d \in \mathbb{N}$ with $d \ge 2$. 
Hereafter we use the following notation for function spaces: 
$H:=L^2(\Omega)$, 
$V:=H^1(\Omega)$, 
$W:=H^2(\Omega)$, 
$H_\Gamma:=L^2(\Gamma)$, 
$Z_\Gamma:=H^{1/2}(\Gamma)$, 
$V_\Gamma:=H^1(\Gamma)$, and 
$W_\Gamma:=H^2(\Gamma)$. We denote the norm of a Hilbert space $X$ by 
$\|\cdot\|_X$. Moreover, $X'$ stands for the dual space of $X$ with their duality pair 
$\langle \cdot, \cdot \rangle_{X',X}$. \pier{By identification of $H$ with its dual space,} we have the Gelfand triple 
$V \hookrightarrow \hookrightarrow H  \hookrightarrow \hookrightarrow V'$, where the notation 
``$\hookrightarrow \hookrightarrow$'' stands for \pier{a dense and compact embedding}.
\smallskip 

Next, \pier{for $s >1/2$ we recall the standard trace operator} 
$\gamma_0 : H^s(\Omega) \to H^{s-1/2}(\Gamma)$ 
(see e.g., \cite{BG87, GR86, LM70, Nec67}), that is, $\gamma_0 v =v_{|_\Gamma}$ 
 for all $v \in C^{\infty}(\overline{\Omega}) \cap H^s(\Omega)$. Moreover, there exists a positive constant $C_{\rm tr}$ such that 
\begin{equation}
	\|\gamma _0 v \|_{H^{s-1/2}(\Gamma)} \le C_{\rm tr} \|v\|_{H^s(\Omega)}
		\quad 
	{\rm for~all~} v \in H^s(\Omega).
	\label{statre1}
\end{equation} 
Hereafter, 
the two notations of the trace 
$\gamma_0 v$ and $v_{|_\Gamma}$ are used interchangeably and 
 if there is no confusion. 
Analogously, for $s>3/2$ \pier{the operator
$\gamma_1 : H^s(\Omega) \to H^{s-3/2}(\Gamma)$, defined by
$\gamma_1 v =(\partial _{\boldsymbol{\nu}} v)_{|_\Gamma} $
 for all $v \in C^{\infty}(\overline{\Omega}) \cap H^s(\Omega)$, fulfills}  
\begin{equation} 
	\|\gamma _1 v \|_{H^{s-3/2}(\Gamma)} \le C_{\rm tr} \|v\|_{H^s(\Omega)}
	 \quad 
	{\rm for~all~} v \in H^s(\Omega),
	\label{statre2}
\end{equation}
where we use \pier{the same notation $C_{\rm tr}$ for the positive constant in \eqref{statre2}, for simplicity.} 
Following the convention, we \pier{adopt the notation $\partial_{\boldsymbol {\nu}} v$
for} the trace $\gamma_1 v $.  
\smallskip

\pier{In the sequel, we follow the convention that the symbol $C$ denotes a generic positive constant that may depend only on $\Omega$, $T$, and the data 
of the problems under consideration. The value of this constant may vary from one occurrence to another, and even within a single formula. Furthermore, we 
use the notation $C_\delta$ to indicate a positive constant that may also depend on the parameter $\delta$.}

\smallskip

Let $s \in (0,1)$ and recall the compact embedding 
$V \hookrightarrow \hookrightarrow H^{s}(\Omega)$. Applying the 
Ehrling--Lions lemma~\pier{(see, e.g., \cite[p.~58]{Lio69}) yields that 
for each $\delta> 0$ there exists a constant $C_\delta>0$} such that 
\begin{gather*}
	\|v\|_{H^s(\Omega)}^2 \le \delta \|v\|_V^2 + C_\delta \|v\|_H^2 
	\quad {\rm for~all~} v \in V.
\end{gather*}
Therefore, if $s \in (1/2,1)$, \pier{from \eqref{statre1}  it follows that} 
\begin{align}
	 \| \gamma_0 v \|_{H_\Gamma}^2 & \le \pier{C} \|\gamma _0 v\|_{H^{s-1/2}(\Gamma)} ^2
    \le \pier{C} \| v \|_{H^{s}(\Omega)}^2 
	\notag \\
	& \le \delta \|v\|_V^2 + 
	C_\delta \|v\|_H^2 
	\quad {\rm for~all~} v \in V , 
	\label{comp2}
\end{align}
for all $\delta >0$. On the other hand, \pier{the elliptic regularity theory (see~\cite[Theorem 3.2, p.~1.79]{BG87} or~\cite[Section 7.3, pp.~187--190]{LM70}) allows us to deduce that
\begin{align}
&\|v\|_{H^{3/2}(\Omega)} \le 
	C_{\rm e} \bigl( \|\Delta v\|_{H} + \|\gamma _0 v\|_{V_\Gamma} \bigr)
	\quad\hbox{ if $\gamma _0 v \in V_\Gamma$,} \label{LM1} \\
&\|v\|_{H^{3/2}(\Omega)} \le C_{\rm e} \bigl( \| v\|_{H} + \|\Delta v\|_{H} + \|\partial _{\boldsymbol{\nu}} v\|_{H_\Gamma} \bigr)
	\quad\hbox{ if $\partial _{\boldsymbol{\nu}} v \in H_\Gamma$,} \label{LM2}
\end{align}
for all  $v \in V$ with $\Delta v \in H$, where} $C_{\rm e}$ is a suitable positive constant. 
We also \pier{recall that the normal derivative can be interpreted in the following weak sense: for elements  $v \in V$ with $\Delta v \in H$ it holds that $\partial _{\boldsymbol{\nu}} v \in Z_\Gamma'$ and 
\begin{equation}
	\langle \partial _{\boldsymbol{\nu}} v, z_\Gamma \rangle_{Z_\Gamma',Z_\Gamma} 
	= (\Delta v, {\mathcal R}z_\Gamma )_H + (\nabla v, \nabla {\mathcal R}z_\Gamma)_H
	\label{normal}
\end{equation}
for all $z_\Gamma \in Z_\Gamma$ (see, e.g., \cite[Corollary 2.6]{GR86}),} where 
\begin{gather}
\pier{\hbox{${\mathcal R}$ is a recovering operator ${\mathcal R}:Z_\Gamma \to V$ 
such that}}
\notag\\
\pier{\hbox{$({\mathcal R} z_\Gamma)_{|_\Gamma}=
\gamma_0 {\mathcal R} z_\Gamma =z_\Gamma$ for all $z_\Gamma \in Z_\Gamma$.}}
\label{recov}
\end{gather} 
\pier{We fix the linear and bounded operator ${\mathcal R}$ once and for all throughout the paper.}
Notice that the relation~\eqref{normal} implies that 
\begin{equation}
	\|\partial _{\boldsymbol{\nu}} v\|_{Z_\Gamma'} \le \pier{C} \bigl( \|\Delta v\| _{H} + \|\nabla v\| _{H} \bigr). \label{normal1}
\end{equation}
\pier{Next, we recall another useful result~\cite[Theorem~\pier{2.27}, p.~1.64]{BG87} for the trace $\partial_{\boldsymbol{\nu}} v$: in fact,  
if $v \in H^{3/2}(\Omega)$ and additionally $\Delta v \in H$, 
then $\partial_{\boldsymbol{\nu}} v \in H_\Gamma$ and it turns out that}
\begin{equation}
	\pier{\|\partial _{\boldsymbol{\nu}} v\|_{H_\Gamma} \le C \bigl( \| v\| _{H^{3/2}(\Omega)} +\|\Delta v\| _{H}  \bigr).} \label{normal2}
\end{equation}
These facts are useful to complete 
the proof of main theorems. 
We also \pier{point out} the following \pier{inequalities of Poincar\'e and 
Poincar\'e--Wirtinger type (see, e.g.,~\cite{Heb96, Nec67})} 
\begin{align}
	&\|v\|_H^2 \le C_{\rm P} \biggl\{ \int_\Omega |\nabla v| ^2 \dx + 
	\biggl| \pier{\int_\Gamma v_{|_\Gamma} \dg }\biggr|^2
	\biggr\} 
	\quad \mbox{for all } v \in V, 
	\label{Poin0}
	\\
	&\|v\|_{H}^2 \le C_{\rm P} \left\{ 
	\int_\Omega |\nabla v| ^2 \dx + \int_\Gamma |\nabla_\Gamma v_\Gamma|^2 \dg
	\right\} \nonumber\\
	&\qquad\qquad \mbox{for all } (v,v_\Gamma) \in \boldsymbol{V} \ \mbox{with } \int_\Gamma v_\Gamma \dg=0,
	\label{Poin1}
	\\
	&\|z_\Gamma\|_{H_\Gamma}^2 \le C_{\rm P} \biggl\{ 
	\int_\Gamma |\nabla_\Gamma z_\Gamma|^2 \dg
	+
	\biggl| \int_\Gamma z_\Gamma \dg  \biggr|^2
	\biggr\} 
	\quad \mbox{for all }z_\Gamma \in V_\Gamma,
	\label{Poin2}
\end{align}
where $C_{\rm P}>0$ is a constant and  
\begin{equation}
\boldsymbol{V}:=\{ (z,z_\Gamma) \in V \times V_\Gamma : \pier{z_{|_\Gamma}} =z_\Gamma ~ {\rm a.e.~on~}\Gamma \}. \label{pier1}
\end{equation}

\subsection{Starting problem}

\pier{We begin our discussion with a known result concerning a 
quasistatic  Cahn--Hilliard equation on the boundary $\Gamma$, coupled with a bulk condition of Allen--Cahn type~\cite{CF20}. Let 
$\varepsilon, \kappa >0$ be two key parameters that play a crucial role in the asymptotic analysis presented in this paper. Referring 
to the well-posedness results from \cite[Theorems 2.3, 2.4]{CF20},
we are going to recall the existence of a weak solution and partial uniqueness -- specifically, the uniqueness of $u$ and $u_\Gamma$ -- for the 
following system}
\begin{align}
  - \varepsilon \Delta \mu =0 
  & \quad {\rm a.e.\ in ~ } Q, \label{ek1}\\
     \mu_{|_\Gamma} =\mu_\Gamma
  & \quad {\rm a.e.\ on~} \Sigma, \label{ek2}\\
   \partial _t u - \Delta u + \xi +\pi(u) = f, \quad \xi \in \beta (u) 
  & \quad {\rm a.e.\ in~} Q, \label{ek3}\\
   u_{|_\Gamma} = u_\Gamma 
  & \quad {\rm a.e.\ on~} \Sigma, \label{ek4}\\ 
    u(0) = u_0 & \quad {\rm a.e.\ in~} \Omega,
    \label{ek5}\\
  \partial _t u_\Gamma + \varepsilon \partial_{\boldsymbol{\nu}} \mu - \Delta_\Gamma \mu_\Gamma = 0
  & \quad {\rm a.e.\ on~} \Sigma, \label{ek6}\\ 
  \mu_\Gamma = \partial_{\boldsymbol{\nu}} u - \kappa \Delta _\Gamma u_\Gamma 
  + \xi_\Gamma + \pi_\Gamma(u_\Gamma) - f_\Gamma, \quad 
  \xi_\Gamma \in \beta_\Gamma(u_\Gamma)
  & \quad {\rm a.e.\ on~} \Sigma, \label{ek7}\\
  u_\Gamma (0) = u_{0\Gamma} & 
    \quad {\rm a.e.\ on~} \Gamma.
   \label{ek8}  
\end{align}
\pier{The terms $\beta+\pi$ and $\beta_\Gamma+\pi_\Gamma$ result from the derivatives or subdifferentials of the double-well potentials ${\mathcal W}$ and ${\mathcal W}_\Gamma$, respectively.
In particular,}
$\beta$, $\beta_\Gamma: \mathbb{R} \to 2^{\mathbb{R}}$ are maximal monotone graphs on $\mathbb{R} \times \mathbb{R}$, while
$\pi$, $\pi_\Gamma: \mathbb{R} \to \mathbb{R}$ are {L}ipschitz continuous functions. For example, \pier{as for  
$\beta_\Gamma$ and $\pi_\Gamma$ we may consider
\smallskip
\renewcommand{\labelitemi}{$\triangleright$}
\begin{itemize}
 \item $\beta_\Gamma(r)=r^3$, $\pi_\Gamma(r)=-r$ for $r \in \mathbb{R}$ (corresponding to the smooth double well potential);\smallskip
 \item $\beta_\Gamma(r)=\ln((1+r)/(1-r))$, $\pi_\Gamma(r)=-2cr$ for $r \in (-1,1)$ (derived from the singular potential of logarithmic type, where $c>0$ is a \pier{sufficiently} large constant which breaks monotonicity);\smallskip
\item $\beta_\Gamma(r)=\partial I_{[-1,1]}(r)$, $\pi_\Gamma(r)=-r$ for $r \in [-1,1]$ (for the non-smooth potential, where the symbol $\partial$ stands for the subdifferential in $\mathbb{R}$);
\smallskip  
 \item $\beta_\Gamma(r)=0$, $\pi_\Gamma (r)=-r$ for $r \in \mathbb{R}$ (for the backward dynamic boundary condition of the type of heat equation, in the case when $\kappa \to 0$). \smallskip
\end{itemize}
Concerning the last case, we note that a structure of second-order partial differential equation of forward-backward type can be found on the boundary equation: indeed, if we 
combine two equations \eqref{ek6} and \eqref{ek7} and let $\kappa \to 0$, then we find it. The choices for $\beta, \, \pi$ are similar 
to the ones for $\beta_\Gamma, \, \pi_\Gamma$, although we can take different selections for the bulk nonlinearities. In particular,
besides the Allen--Cahn equation we also mention the case of the standard heat equation, where $\beta =\pi \equiv 0$.}
\smallskip

The assumptions for 
$\beta$, $\beta_\Gamma$, $\pi$, $\pi_\Gamma$, and given data are set up as follows:
\begin{enumerate}
\item[(A1)] $\beta$, $\beta_\Gamma: \mathbb{R} \to 2^{\mathbb{R}}$ are maximal monotone graphs in $\mathbb{R} \times \mathbb{R}$, which
coincide with the subdifferentials $\beta =\partial \widehat{\beta}$, $\beta_\Gamma =\partial \widehat{\beta}_\Gamma$ of 
some proper, lower semicontinuous, and convex functions 
$\widehat{\beta}$, $\widehat{\beta}_\Gamma: \mathbb{R} \to [0,+\infty]$ such that 
$\widehat{\beta}(0)=\widehat{\beta}_\Gamma(0)=0$, with the 
corresponding effective domains denoted by $D(\beta)$ and $D(\beta_\Gamma)$, respectively; 
\item[(A2)] $D (\beta_\Gamma) \subseteq D(\beta)$ and there exist two constants 
$\varrho \geq 1$ and $c_0>0$ such that 
\begin{equation}
	\bigl| \beta^\circ(r) \bigr| \le \varrho\bigl| \beta ^\circ_\Gamma (r) \bigr|+c_0 \quad 
	\text{for all } r \in D(\beta_\Gamma);
	\label{ccond}
\end{equation}
\item[(A3)] $\pi$, $\pi_\Gamma: \mathbb{R} \to \mathbb{R}$ are Lipschitz continuous functions with their Lipschitz constants $L$ and $L_\Gamma$, respectively;  
\item[(A4)] $u_0 \in V$, $u_{0\Gamma} \in V_\Gamma$ satisfy 
$\widehat{\beta}(u_0) \in L^1(\Omega)$,
$\widehat{\beta}_\Gamma(u_{0\Gamma}) \in L^1(\Gamma)$,
 and $(u_0)_{|_\Gamma}=u_{0\Gamma}$ a.e.\ on $\Gamma$.  
Moreover, let
\begin{equation*}
	m_\Gamma:=\frac{1}{|\Gamma|} \int_{\Gamma} u_{0\Gamma} \dg \in {\rm int} \, D(\beta_\Gamma); 
\end{equation*}
\item[(A5)] $f \in L^2(0,T;H)$ and $f_\Gamma \in W^{1,1}(0,T;H_\Gamma)$.
\end{enumerate}
\smallskip

As a remark, \pier{we point out that the assumption (A1) allows a 
wide class of suitable monotone terms $\beta$ and 
$\beta_\Gamma$, including singular and nonsmooth graphs.
The assumption (A2) means that $\beta_\Gamma$ is dominant
over $\beta$. Of course, it 
automatically holds if we choose $\beta $ with the same growth behavior of $ \beta_\Gamma$. 
In \eqref{ccond} $\beta^\circ $ and $ \beta^\circ_\Gamma$ denote the minimal sections of $\beta $ and $\beta_\Gamma$, specified by (e.g.\ for $\beta$)}
$\beta^\circ(r):=\{ r^* \in \beta(r) : |r^*|=\min_{s \in \beta(r)} |s|\}$ for $r\in D(\beta)$. 
\smallskip

Under these setting, we now recall the result \pier{shown 
in~\cite[see Theorems~2.3, 2.4]{CF20} and stating the existence of a weak solution to}~\eqref{ek1}--\eqref{ek8}. 

\begin{proposition}
\label{ANA}
Under \pier{the} assumptions {\rm (A1)}--{\rm (A5)}, there exist 
\begin{gather*}
	u \in H^1(0,T;H) \cap C\bigl( [0,T]; V \bigr) \cap L^2(0,T;W), \\
	\mu \in L^2(0,T;V), \quad \xi \in L^2(0,T;H), \\
	u_{\Gamma} \in H^1(0,T;V_\Gamma') \cap L^\infty (0,T;V_\Gamma) \cap L^2(0,T;W_\Gamma), \\ 
	\mu_{\Gamma} \in L^2(0,T;V_\Gamma), \quad \xi_{\Gamma} \in L^2(0,T;H_\Gamma),
\end{gather*}
such that they satisfy \eqref{ek2}--\eqref{ek5}, \eqref{ek7}, \eqref{ek8}, and
\begin{equation}
	\bigl\langle \partial _t u_{\Gamma}(t),z_\Gamma \bigr\rangle_{V_\Gamma',V_\Gamma}
	+ \varepsilon \int_\Omega \nabla \mu (t) \cdot \nabla z \dx 
	+ \int_\Gamma \nabla _\Gamma \mu_{\Gamma}(t) \cdot \nabla_\Gamma z_\Gamma \dg
	=0 \label{ek1and6}
\end{equation}
for all test functions \pier{$(z,z_\Gamma) \in \boldsymbol{V}$
and for a.a.~ $t \in (0,T)$.}
\end{proposition}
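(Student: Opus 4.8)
Since the statement is recalled from \cite{CF20}, I outline the approximation scheme on which such an existence result rests, and which I would carry out myself. The plan is to construct the solution as a limit of solutions to a regularized problem. First I would replace the maximal monotone graphs $\beta,\beta_\Gamma$ by their Moreau--Yosida approximations $\beta_\lambda=\partial\widehat\beta_\lambda$ and $\beta_{\Gamma,\lambda}=\partial\widehat\beta_{\Gamma,\lambda}$ ($\lambda>0$), which are Lipschitz continuous; together with the Lipschitz maps $\pi,\pi_\Gamma$ of (A3) the nonlinearities become globally Lipschitz. With $\varepsilon,\kappa>0$ fixed, for each $\lambda$ I would solve the regularized system by a Faedo--Galerkin discretization in the product space $\boldsymbol{V}$, eliminating the quasistatic variable $\mu$ through $-\varepsilon\Delta\mu=0$, $\mu_{|_\Gamma}=\mu_\Gamma$ (so that $\mu$ is the harmonic extension of $\mu_\Gamma$) and reading \eqref{ek1and6} as the evolution law for $u_\Gamma$.

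Then I would derive two a priori estimates uniform in $\lambda$. The energy estimate is obtained by testing the regularized \eqref{ek3} with $\partial_t u$ and \eqref{ek1and6} with $(\mu,\mu_\Gamma)\in\boldsymbol{V}$, then eliminating the trace contribution $\int_\Gamma\partial_{\boldsymbol{\nu}}u\,\partial_t u_\Gamma$ by means of \eqref{ek7}; the term $-\int_\Gamma\mu_\Gamma\,\partial_t u_\Gamma$ so produced equals exactly $\varepsilon\|\nabla\mu\|_H^2+\|\nabla_\Gamma\mu_\Gamma\|_{H_\Gamma}^2$ after reinserting \eqref{ek1and6}. Rewriting $\int_\Omega\pi(u)\partial_t u$ and $\int_\Gamma\pi_\Gamma(u_\Gamma)\partial_t u_\Gamma$ as time derivatives of primitives, and integrating $\int_\Gamma f_\Gamma\,\partial_t u_\Gamma$ by parts in time (here (A5), i.e.\ $f_\Gamma\in W^{1,1}(0,T;H_\Gamma)$, is essential, since $\partial_t u_\Gamma$ is only a dual element), a Gronwall argument bounds $u$ in $H^1(0,T;H)\cap L^\infty(0,T;V)$, $u_\Gamma$ in $L^\infty(0,T;V_\Gamma)$ (using $\kappa>0$), $\nabla\mu$ and $\nabla_\Gamma\mu_\Gamma$ in $L^2(0,T;H)$ and $L^2(0,T;H_\Gamma)$, and $\widehat\beta_\lambda(u),\widehat\beta_{\Gamma,\lambda}(u_\Gamma)$ in $L^\infty(0,T;L^1(\Omega))$ and $L^\infty(0,T;L^1(\Gamma))$.

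The main obstacle is the second estimate, controlling $\xi=\beta_\lambda(u)$ and $\xi_\Gamma=\beta_{\Gamma,\lambda}(u_\Gamma)$. I would test the regularized \eqref{ek3} with $\beta_\lambda(u)$: the monotone term gives $\int_\Omega\beta_\lambda'(u)|\nabla u|^2\ge0$, while the trace contribution $-\int_\Gamma\partial_{\boldsymbol{\nu}}u\,\beta_\lambda(u_\Gamma)$, rewritten via \eqref{ek7}, generates the cross term $\int_\Gamma\beta_{\Gamma,\lambda}(u_\Gamma)\,\beta_\lambda(u_\Gamma)$. It is precisely the compatibility condition (A2), namely $|\beta^\circ(r)|\le\varrho|\beta_\Gamma^\circ(r)|+c_0$ from \eqref{ccond}, inherited by the Yosida approximations, that allows me to bound $\beta_\lambda(u)$ in $L^2(0,T;H)$ jointly with $\beta_{\Gamma,\lambda}(u_\Gamma)$ in $L^2(0,T;H_\Gamma)$; the leftover term $-\int_\Gamma\mu_\Gamma\,\beta_\lambda(u_\Gamma)$ is absorbed by Young's inequality and the bound on $\mu_\Gamma$ already established. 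Controlling the mean values of $\mu_\Gamma$ (through \eqref{ek7}, the term $\Delta_\Gamma u_\Gamma$ having zero average) and of $\mu$ (through its harmonicity) then upgrades the gradient bounds to full estimates of $\mu_\Gamma$ in $L^2(0,T;V_\Gamma)$ and $\mu$ in $L^2(0,T;V)$ via the Poincar\'e--Wirtinger inequalities \eqref{Poin1}--\eqref{Poin2}.

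Finally, the $W$- and $W_\Gamma$-regularity follows by elliptic estimates: reading \eqref{ek3} as $-\Delta u\in L^2(0,T;H)$ gives $u\in L^2(0,T;H^{3/2}(\Omega))$ by \eqref{LM1}, hence $\partial_{\boldsymbol{\nu}}u\in L^2(0,T;H_\Gamma)$ by \eqref{normal2}; then \eqref{ek7} yields $\Delta_\Gamma u_\Gamma\in L^2(0,T;H_\Gamma)$, so $u_\Gamma\in L^2(0,T;W_\Gamma)$ and, bootstrapping, $u\in L^2(0,T;W)$, while \eqref{ek1and6} gives $\partial_t u_\Gamma\in L^2(0,T;V_\Gamma')$. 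Passing to the limit $\lambda\to0$ is then routine: weak and weak-$*$ compactness supply the limits; the Aubin--Lions lemma gives strong convergence of $u$ in $C([0,T];H)$ and in $L^2(0,T;V)$, hence of $u_\Gamma$ on $\Gamma$, to pass to the limit in the Lipschitz terms $\pi(u),\pi_\Gamma(u_\Gamma)$; and the maximal monotonicity (demiclosedness) of $\beta,\beta_\Gamma$ identifies the weak limits of $\beta_\lambda(u),\beta_{\Gamma,\lambda}(u_\Gamma)$ as elements $\xi\in\beta(u)$ and $\xi_\Gamma\in\beta_\Gamma(u_\Gamma)$. This verifies \eqref{ek2}--\eqref{ek5}, \eqref{ek7}, \eqref{ek8}, and \eqref{ek1and6}.
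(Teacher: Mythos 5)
Your overall scheme (Yosida regularization, Galerkin/viscous approximation, the energy estimate obtained by testing \eqref{ek3} with $\partial_t u$ and \eqref{ek1and6} with $(\mu,\mu_\Gamma)$, the estimate of the nonlinearities by testing with $\beta_\lambda(u)$ and exploiting \eqref{ccond}, the elliptic bootstrap, and the demiclosedness argument in the limit $\lambda\to0$) is exactly the strategy of the cited reference and of the uniform estimates reproduced in Section~3 of this paper. However, there is one genuine gap in the chain of a priori estimates: a circular dependency in your control of $\mu_\Gamma$. The energy estimate only bounds $\nabla_\Gamma\mu_\Gamma$ in $L^2(0,T;H_\Gamma)$; to absorb the term $-\int_\Gamma\mu_\Gamma\,\beta_\lambda(u_\Gamma)$ in the second estimate you need the full norm $\|\mu_\Gamma\|_{L^2(0,T;H_\Gamma)}$, hence by \eqref{Poin2} the spatial mean of $\mu_\Gamma$. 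You propose to control this mean ``through \eqref{ek7}''; but integrating \eqref{ek7} over $\Gamma$ produces $\int_\Gamma\partial_{\boldsymbol{\nu}}u=\int_\Omega\Delta u=\int_\Omega(\partial_t u+\xi+\pi(u)-f)$ and $\int_\Gamma\xi_\Gamma$, so the mean of $\mu_\Gamma$ is bounded only in terms of the $L^1$-norms of $\xi$ and $\xi_\Gamma$ --- precisely the quantities you are still trying to estimate. Your sketch never breaks this loop.

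The standard resolution, used in the paper's Lemma~\ref{L4}, is an intermediate $L^2(0,T;L^1)$-estimate of $\beta_\lambda(u)$ and $\beta_{\Gamma,\lambda}(u_\Gamma)$ obtained \emph{before} any bound on $\mu_\Gamma$ is available: one tests \eqref{ek7} by $u_\Gamma-m_\Gamma$, uses the inequality $\beta_\lambda(r)(r-m_\Gamma)\ge\delta_0|\beta_\lambda(r)|-c_1$ (see \eqref{pier3}), which is where the hypothesis $m_\Gamma\in\mathrm{int}\,D(\beta_\Gamma)$ of {\rm (A4)} enters --- an assumption your argument never invokes, which is the telltale sign of the omission --- and rewrites the dangerous term $\int_\Gamma\mu_\Gamma(u_\Gamma-m_\Gamma)$ through the auxiliary elliptic problem \eqref{aux} as $-\langle\partial_t u_\Gamma,y_\Gamma\rangle_{V_\Gamma',V_\Gamma}$, so that only the already-available bound on $\partial_t u_\Gamma$ in $L^2(0,T;V_\Gamma')$ is needed. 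Only after this step can the mean of $\mu_\Gamma$, the full $V_\Gamma$-norm of $\mu_\Gamma$, and then your $\beta_\lambda(u)$-test be carried out. A further minor inaccuracy: the cross term $\int_\Gamma\beta_{\Gamma,\lambda}(u_\Gamma)\beta_\lambda(u_\Gamma)$ together with \eqref{ccond} controls $\beta_\lambda(u_\Gamma)$ in $H_\Gamma$, not $\beta_{\Gamma,\lambda}(u_\Gamma)$; the $L^2(0,T;H_\Gamma)$-bound on $\xi_\Gamma$ claimed in the proposition requires a separate test of \eqref{ek7} by $\beta_{\Gamma,\lambda}(u_\Gamma)$ (as in Lemma~\ref{L6}), which for fixed $\kappa>0$ is unproblematic once $\partial_{\boldsymbol{\nu}}u\in L^2(0,T;H_\Gamma)$ is known.
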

\smallskip

\pier{We recall that $\boldsymbol{V}$ is defined in \eqref{pier1}
and emphasize that \eqref{ek1and6} represents a weak formulation of \eqref{ek1} and \eqref{ek6}.
Hereafter, we use as well the space}\pier{
\begin{equation}
	\boldsymbol{Z}:=\bigl\{ (z,z_\Gamma) \in V \times Z_\Gamma : z_{|_\Gamma}=z_\Gamma~ {\rm a.e.~ on~} \Gamma \bigr\}
    \label{pier2}
\end{equation}
and remark that $\boldsymbol{Z}$ is exactly the set of pairs 
$(z,z_\Gamma)$, for all $z\in V$ along with their trace $z_{|_\Gamma}$: then, $\boldsymbol{Z}$ is actually isomorphic to $V$.}
\smallskip

We \pier{term $({\rm P})_{\varepsilon\kappa}$ the above problem, which is 
formally described by equations and conditions~\eqref{ek1}--
\eqref{ek8}. 
We deal with this problem, in the aim of performing 
three asymptotics: $\kappa \to 0$, $\varepsilon \to 0$, and both of them tending to $0$.   
Therefore, in several points it will be important to make clear the dependence of the components of the solution in terms of  
$\varepsilon$ and $\kappa$, so we will use $u_{\varepsilon, \kappa}$ in place of $u$, $\mu_{\varepsilon,\kappa}$ in place $\mu$, and so on. Both notations will be employed according to the context.}
\smallskip

\pier{In order to discuss higher regularities and other properties of the solutions, we need additional requirements for 
$\beta$ and $\beta _\Gamma$, related to the growth conditions. 
A similar framework has been considered in the contributions~\cite{CFS22, CFS22b} and reads 
\begin{enumerate}
\item[(A6)] $D(\beta)=D(\beta_\Gamma)$ and there exists a constant $C_\beta \ge 1$ such that 
\begin{equation*}
	\frac{1}{C_\beta}\bigl| \beta_{\Gamma} ^\circ (r) \bigr| - C_\beta \le \bigl| \beta^\circ (r) \bigr|
	\le C_\beta \bigl( \bigl| \beta_\Gamma ^\circ (r) \bigr| +1 \bigr) \quad {\rm for~all~}r \in D(\beta).
\end{equation*}
\end{enumerate} 
Of course, this is realized by choosing $\beta$ with the same domain and growth of $\beta_\Gamma$. 
As a remark, we anticipate that the error estimates can be obtained under this assumption.}
\smallskip

\subsection{Three target problems}

We set up three target problems which \pier{are obtained as follows: $\kappa \to 0$ with a fixed $\varepsilon>0$, 
$\varepsilon \to 0$ with a fixed $\kappa>0$, and 
both $\varepsilon, \kappa \to 0$. We name each problems by $({\rm P})_{\varepsilon}$, $({\rm P})_{\kappa}$, and $({\rm P})$, respectively (see~{\sc Figure 1}).}
\smallskip

\begin{figure}[h]
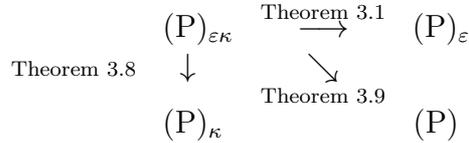

\begin{equation*}
\begin{matrix}
& ({\rm P})_{\varepsilon\kappa} 
& \overset{\text{\tiny{\rm Theorem~\ref{thm1}}}}{\longrightarrow} 
& ({\rm P})_{\varepsilon} \\
\text{\tiny{\rm Theorem~\ref{thm2}}} 
& \takeshi{~ \downarrow \quad} 
& \takeshi{
\underset{\text{\tiny{\rm Theorem~\ref{thm3}}}}{\searrow} } 
& \\ 
& \takeshi{({\rm P})_{\kappa}~~} 
& \qquad \qquad 
& \takeshi{({\rm P})~} \\
\end{matrix}
\end{equation*}
\caption{Asymptotics between $({\rm P})_{\varepsilon\kappa}$, $({\rm P})_{\varepsilon}$, $({\rm P})_{\kappa}$, and $({\rm P})$}
\end{figure}

The first problem $({\rm P})_\varepsilon$ \pier{contains} a sort of forward-backward dynamic boundary condition. More precisely, \pier{the resulting system couples an Allen--Cahn equation for $u$ with a possible forward-backward dynamic boundary condition for the trace $u_\Gamma$. The problem consists in finding a \pier{sextuple} $(u,\mu,\xi, u_\Gamma, \mu_\Gamma, \xi_\Gamma)$ of functions that satisfy
\begin{align}
  - \varepsilon \Delta \mu =0 
  & \quad {\rm a.e.\ in ~ } Q, 
  \label{e1} 
 \\
         \mu_{|_\Gamma} =\mu_\Gamma
  & \quad {\rm a.e.\ on~} \Sigma, 
  \label{e2}
  \\
   \partial _t u - \Delta u + \xi +\pi(u) = f, \quad \xi \in \beta (u) 
  & \quad {\rm a.e.\ in~} Q, 
  \label{e3}
  \\
   u_{|_\Gamma} = u_\Gamma 
  & \quad {\rm a.e.\ on~} \Sigma, 
  \label{e4}
  \\ 
  u(0) = u_0 & \quad {\rm a.e.\ in~} \Omega,
  \label{e5} \\
  \partial _t u_\Gamma + \varepsilon \partial_{\boldsymbol{\nu}} \mu - \Delta_\Gamma \mu_\Gamma = 0
  & \quad {\rm a.e.\ on~} \Sigma, \label{e6}
  \\ 
  \mu_\Gamma = \partial_{\boldsymbol{\nu}} u 
  + \xi_\Gamma + \pi_\Gamma(u_\Gamma) - f_\Gamma, \quad 
  \xi_\Gamma \in \beta_\Gamma(u_\Gamma)
  & \quad {\rm a.e.\ on~} \Sigma, \label{e7} \\
  u_\Gamma (0) = u_{0\Gamma} & 
    \quad {\rm a.e.\ on~} \Gamma, \label{e8}
\end{align}
that is, the system~\eqref{ek1}--\eqref{ek8} with $\kappa=0$.
Now, we emphasize that \eqref{e6} and \eqref{e7} provide a 
nonlinear diffusion equation in terms of $u_\Gamma$, which somehow works as a dynamic boundary condition for the equations in the bulk, where we have the Laplace equation \eqref{e1} for $\mu$ 
with non-homogeneous Dirichlet boundary condition \eqref{e2} and 
the Allen--Cahn equation~\eqref{e3} for $u$ 
with non-homogeneous Dirichlet boundary condition \eqref{ek4}. 
As a remark, bulk equations have two kinds of boundary conditions, respectively, but in terms of $\mu_\Gamma$ and $u_\Gamma$ that are 
the unknowns on the boundary. Thus, the full system \eqref{e1}--
\eqref{e8} actually yields a transmission problem in the bulk and on the boundary.}
\smallskip

The second problem $({\rm P})_\kappa$ \pier{is provided by the 
Allen--Cahn equation~\eqref{e3} with a dynamic boundary condition of Cahn--Hilliard type: indeed, one has to
find a \pier{quintuple} $(u,\xi, u_\Gamma, \mu_\Gamma, \xi_\Gamma)$ of functions satisfying 
\eqref{e3}, \eqref{e4}, 
\begin{align}
  \partial _t u_\Gamma  - \Delta_\Gamma \mu_\Gamma = 0
  & \quad {\rm a.e.\ on~} \Sigma, \label{k6}\\ 
  \mu_\Gamma = \partial_{\boldsymbol{\nu}} u - \kappa \Delta _\Gamma u_\Gamma 
  + \xi_\Gamma + \pi_\Gamma(u_\Gamma) - f_\Gamma, \quad 
  \xi_\Gamma \in \beta_\Gamma(u_\Gamma)
  & \quad {\rm a.e.\ on~} \Sigma, \label{k7}
\end{align}
and the initial conditions~\eqref{e5} and \eqref{e8}.} 
We point out that in the problem $({\rm P})_\kappa$ the chemical potential $\mu$ in the bulk completely disappears from the formulation. 
\pier{Problem~$({\rm P})_\kappa$ is also a sort of transmission problem via the Dirichlet boundary condition \eqref{e4}, where $u_\Gamma$ has to solve the Cahn--Hilliard equation specified by \eqref{k6} and \eqref{k7} and including  the normal derivative $\partial _{\boldsymbol{\nu}} u$ of $u$}. 
\smallskip

The last problem $({\rm P})$ \pier{reduces to the previous one, but with $\kappa=0 $ in \eqref{k7}, or it may be seen as the system \eqref{e3}--\eqref{e8} with $\varepsilon = 0 $ in \eqref{e6}. Thus, the 
solution we search is a \pier{quintuple} $(u,\xi, u_\Gamma, \mu_\Gamma, \xi_\Gamma)$ of functions fulfilling   
\eqref{e3}--\eqref{e5}, \eqref{k6}, \eqref{e7}, \eqref{e8}, that~is  
\begin{align}
   \partial _t u - \Delta u + \xi +\pi(u) = f, \quad \xi \in \beta (u) 
  & \quad {\rm a.e.\ in~} Q, \notag \\
   u_{|_\Gamma} = u_\Gamma 
  & \quad {\rm a.e.\ on~} \Sigma, \notag \\ 
  \partial _t u_\Gamma  - \Delta_\Gamma \mu_\Gamma = 0
  & \quad {\rm a.e.\ on~} \Sigma, \notag \\ 
  \mu_\Gamma = \partial_{\boldsymbol{\nu}} u
  + \xi_\Gamma + \pi_\Gamma(u_\Gamma) - f_\Gamma, \quad 
  \xi_\Gamma \in \beta_\Gamma(u_\Gamma)
  & \quad {\rm a.e.\ on~} \Sigma, \notag   
\end{align}
without rewriting initial conditions.} 
We point out that in the strong formulation of the problem ({\rm P}), 
the \pier{last two boundary equations can be merged as only one equation. 
A  striking example of $({\rm P})$ is represented by a heat equation in the bulk, coupled with a backward dynamic boundary condition on the boundary: 
\begin{align*}
   \partial _t u - \Delta u = f
  & \quad {\rm a.e.\ in~} Q, \\\
   u_{|_\Gamma} = u_\Gamma 
  & \quad {\rm a.e.\ on~} \Sigma, \\ 
  \partial _t u_\Gamma  
  +\Delta_\Gamma u_\Gamma = -\Delta_\Gamma (f_\Gamma - \partial_{\boldsymbol{\nu}} u) 
  & \quad {\rm a.e.\ on~} \Sigma,
\end{align*}
where the choices $\beta (r)=\pi(r)=\beta_\Gamma(r) =0$, $\pi_\Gamma (r)=-r$ for $r \in \mathbb{R}$, have been taken. 
As a remark, note that the sign in front of the Laplace--Beltrami operator in the left-hand side of the last equation is positive.}

\smallskip 

From the next section, we will discuss the relationship between $({\rm P})_{\varepsilon\kappa}$, $({\rm P})_{\kappa}$, $({\rm P})_{\varepsilon}$, and 
$({\rm P})$ by the limiting procedure. 
Under the assumptions (A1)--(A5), \pier{the well-posedness 
of $({\rm P})_{\varepsilon\kappa}$ is ensured by 
Proposition~\ref{ANA}. Additionally, the same kind of estimates 
obtained in the proof holds at the level of Yosida approximations 
of $\beta$ and $\beta_\Gamma$, see \cite[Lemma~A.1]{CF20b}. 
Based on the known result, we are now dealing with the uniform estimates.} 

\smallskip

Moreover, \pier{let us comment on the assumption (A6), which has been already used to derive the higher regularity of the solution in the previous works~\cite{CFS22, CFS22b}. In general, the regularity $L^2(0,T;H_\Gamma)$ for $\xi_\Gamma$, the element of $\beta_\Gamma(u_\Gamma)$, is related to the one of the normal derivative $\partial _{\boldsymbol{\nu}} u$. The assumption (A6) also helps to obtain the regularity in $ u\in L^2(0,T;H^{3/2}(\Omega))$ from the elliptic estimate~\eqref{LM2}, and from the standard trace theory \eqref{statre1} this ensures that~$u_\Gamma \in L^2(0,T;V_\Gamma)$.}

\section{Asymptotic analyses}
\label{asy-ana}

\pier{In this section, we analyze three asymptotic regimes: \takeshi{$\kappa \to 0$, 
$\varepsilon \to 0$,} and the simultaneous limit where both parameters tend to zero. Specifically, we aim to illustrate the following convergence framework:
\begin{equation*}
({\rm P})_{\varepsilon\kappa} \stackrel{\rm Theorem~\ref{thm1}}{\longrightarrow} ({\rm P})_{\varepsilon}, \quad ({\rm P})_{\varepsilon\kappa} \stackrel{\rm Theorem~\ref{thm2}}{\longrightarrow} ({\rm P})_{\kappa}, \quad ({\rm P})_{\varepsilon\kappa} \stackrel{\rm Theorem~\ref{thm3}}{\longrightarrow} ({\rm P}).
\end{equation*}
We begin with the asymptotic analysis
$({\rm P})_{\varepsilon\kappa} \to ({\rm P})_{\varepsilon}$ as $\kappa \to 0$ which is addressed in Subsections 3.1 and 3.2. In Subsection 3.1, we establish uniform estimates, and in Subsection 3.2, we complete the proof of Theorem~\ref{thm1}.
Next, in Subsection 3.3, we study the limit $({\rm P})_{\varepsilon\kappa} \to ({\rm P})_{\kappa}$ as $\varepsilon \to 0$. 
Finally, Subsection 3.4 is devoted to the joint asymptotic behavior $({\rm P})_{\varepsilon\kappa} \to ({\rm P})$ as both parameters tend to zero.}

\subsection{First asymptotic result and uniform estimates}
\pier{In this subsection, we consider the limit as $\kappa \to 0$ while keeping $\varepsilon>0$ fixed. 
This corresponds to the vanishing diffusion term $\kappa \Delta _\Gamma$ on the boundary. 
We now state our first theorem concerning the asymptotic behavior of $({\rm P})_{\varepsilon\kappa}$ as it converges to $({\rm P})_\varepsilon$.}

\begin{theorem}\label{thm1}
Assume {\rm (A1)}--{\rm (A5)}. Then there exists \pier{a sextuple} $(u,\mu, \xi, u_\Gamma, \mu_\Gamma, \xi_\Gamma)$ satisfying the following regularity properties:
\begin{gather*}
	u \in H^1(0,T;H) \cap L^\infty(0,T;V), \quad \Delta u \in L^2(0,T;H),\\
	\mu \in L^2(0,T;V), \quad \Delta \mu \in L^2(0,T;H), \quad \xi \in L^2(0,T;H), \\
	u_\Gamma \in H^1(0,T;V_\Gamma') \cap C\bigl([0,T];H_\Gamma \bigr) \cap L^\infty (0,T;Z_\Gamma), \\
	\mu_\Gamma \in L^2(0,T;V_\Gamma), \quad \xi_\Gamma \in L^2(0,T;Z_\Gamma')
\end{gather*}
\pier{and fulfilling \eqref{e1}--\eqref{e5}, \eqref{e8}, and 
the conditions \eqref{e6} and \eqref{e7} in the following weak sense: 
\begin{align} 
	\langle \partial_t u_{\Gamma},z_\Gamma \rangle_{V_\Gamma',V_\Gamma}
	+ \varepsilon \langle \partial_{\boldsymbol{\nu}} \mu, z_\Gamma \rangle_{Z_\Gamma',Z_\Gamma} 
	+ \int_\Gamma \nabla _\Gamma \mu_{\Gamma} \cdot \nabla_\Gamma z_\Gamma \dg
	=0 \notag \\
	\text{ for all } z_\Gamma \in V_\Gamma, 
	\text{ a.e.\ in } (0,T),
	\label{e6weak}\\
	\noalign{\smallskip}
	\int_\Gamma \mu_\Gamma z_\Gamma \dg =
	\langle \partial_{\boldsymbol{\nu}} u + \xi _{\Gamma},z_\Gamma \rangle_{Z_\Gamma',Z_\Gamma}
	+ \int_\Gamma \bigl( \pi_\Gamma(u_\Gamma) -f_\Gamma \bigr) z_\Gamma \dg \quad \hbox{and} 
	\notag 
	\\
	\langle \xi_\Gamma ,z_\Gamma-u_\Gamma \rangle_{Z_\Gamma',Z_\Gamma } 
	+ \int_\Gamma \widehat{\beta}_\Gamma (u_\Gamma) \dg 
	\le \int_\Gamma \widehat{\beta}_\Gamma(z_\Gamma)\, \dg \notag \\
	\mbox{for all } z_\Gamma \in Z_\Gamma, 
	\mbox{ a.e.\ in } (0,T).
	\label{e7weak}
\end{align}
Moreover,} the \pier{sextuple}
$(u,\mu, \xi, u_\Gamma, \mu_\Gamma, \xi_\Gamma)$ is obtained as limit of the family 
$\{(u_\kappa,\mu_\kappa, \xi_\kappa$, $u_{\Gamma,\kappa}, \mu_{\Gamma,\kappa}, \xi_{\Gamma,\kappa})\}_{\kappa \in (0,1]}$ \pier{of solutions to $({\rm P})_{\varepsilon\kappa}$ as $\kappa \searrow 0$ in the following sense:
there is a vanishing} subsequence $\{\kappa_k \}_{k \in \mathbb{N}}$ such that, as $k \to +\infty$, 
\begin{align}
	&u_{\kappa_k} \to u \quad \text{weakly~star~in~} H^1(0,T;H) \cap L^\infty (0,T;V),
	\label{cv1k} \\
	&u_{\kappa_k} \to u 
	\quad \text{strongly~in~} C\bigl([0,T];H \bigr),
	\label{cv2k} \\
	&\mu_{\kappa_k} \to \mu  \quad \text{weakly~in~} L^2 (0,T;V),
	\label{cv4k} \\
	&\xi_{\kappa_k} \to \xi \quad  \text{weakly~in~} L^2 (0,T;H),
	\label{cv5k} \\
	&u_{\Gamma,\kappa_k} \to u_\Gamma \quad \text{weakly~star~in~} H^1(0,T;V_\Gamma') \cap L^\infty (0,T;Z_\Gamma),
	\label{cv6k} \\
	&u_{\Gamma,\kappa_k} \to u_\Gamma \quad \text{strongly~in~} C\bigl([0,T];H_\Gamma \bigr),
	\label{cv7k} \\
	&{\kappa_k} u_{\Gamma,\kappa_k} \to 0 \quad \text{strongly~in~} L^\infty (0,T;V_\Gamma),
	\label{cv8k}\\
	&\mu_{\Gamma,\kappa_k} \to \mu_\Gamma \quad \text{weakly~in~} L^2 (0,T;V_\Gamma),
	\label{cv9k} \\
	&\xi_{\Gamma,\kappa_k} \to \xi_\Gamma \quad \text{weakly~in~} L^2 (0,T;V_\Gamma'),
	\label{cv10k} \\
	&(-{\kappa_k} \Delta_\Gamma u_{\Gamma,\kappa_k} +\xi_{\Gamma,\kappa_k}) \to \xi_\Gamma \quad \text{weakly~in~} 
	L^2 (0,T;Z_\Gamma').
	\label{cv11k}
\end{align}
\end{theorem}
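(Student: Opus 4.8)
The plan is to realize the sextuple as a limit, along a subsequence $\kappa_k \searrow 0$, of the solutions to $({\rm P})_{\varepsilon\kappa}$ furnished by Proposition~\ref{ANA}. To make the a~priori estimates rigorous I would first derive them on the Yosida regularization of $\beta,\beta_\Gamma$, invoking \cite[Lemma~A.1]{CF20b}, and then let the regularization parameter vanish, so that the bounds hold for $({\rm P})_{\varepsilon\kappa}$ \emph{uniformly in} $\kappa \in (0,1]$. The basic energy estimate comes from testing the weak equation \eqref{ek1and6} by $(\mu_\kappa,\mu_{\Gamma,\kappa}) \in \boldsymbol{V}$ --- which yields the dissipation $\varepsilon\|\nabla\mu_\kappa\|_H^2+\|\nabla_\Gamma\mu_{\Gamma,\kappa}\|_{H_\Gamma}^2$ and the coupling $\langle\partial_t u_{\Gamma,\kappa},\mu_{\Gamma,\kappa}\rangle$ --- combined with testing \eqref{ek3} by $\partial_t u_\kappa$ and eliminating $\partial_{\boldsymbol{\nu}}u_\kappa$ through \eqref{ek7}. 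After writing $\pi,\pi_\Gamma$ via their primitives and integrating the $f_\Gamma$-contribution by parts in time (using $f_\Gamma\in W^{1,1}(0,T;H_\Gamma)$), this produces a Gronwall-type bound for the energy $\tfrac12\|\nabla u_\kappa\|_H^2+\int_\Omega\widehat{\beta}(u_\kappa)\dx+\tfrac\kappa2\|\nabla_\Gamma u_{\Gamma,\kappa}\|_{H_\Gamma}^2+\int_\Gamma\widehat{\beta}_\Gamma(u_{\Gamma,\kappa})\dg$, giving the bounds behind \eqref{cv1k}, \eqref{cv4k}, \eqref{cv9k} and the crucial $\sqrt{\kappa}\,\|u_{\Gamma,\kappa}\|_{L^\infty(0,T;V_\Gamma)}\le C$, from which \eqref{cv8k} follows at once since $\kappa=\sqrt{\kappa}\cdot\sqrt{\kappa}$.

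Next I would fix the boundary mean. Testing \eqref{ek1and6} by the constant pair $(1,1)$ gives $\tfrac{d}{dt}\int_\Gamma u_{\Gamma,\kappa}\dg=0$, so the mean of $u_{\Gamma,\kappa}$ stays equal to $m_\Gamma \in {\rm int}\,D(\beta_\Gamma)$; the classical argument based on interiority of $m_\Gamma$ then bounds $\int_\Gamma|\xi_{\Gamma,\kappa}|\dg$ and the means of $\mu_{\Gamma,\kappa}$ and $\mu_\kappa$, upgrading the gradient bounds to full $L^2(0,T;V_\Gamma)$ and $L^2(0,T;V)$ bounds for $\mu_{\Gamma,\kappa}$ and $\mu_\kappa$ through \eqref{Poin0}--\eqref{Poin2}. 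Testing \eqref{ek3} by $\xi_\kappa$ and dominating the resulting boundary contribution by \eqref{ccond} (with the help of (A6), the elliptic estimate \eqref{LM2} and the trace inequality \eqref{statre1}) bounds $\xi_\kappa$ in $L^2(0,T;H)$; by comparison $\Delta u_\kappa$ is bounded in $L^2(0,T;H)$, whence $\partial_{\boldsymbol{\nu}}u_\kappa$ is bounded in $L^2(0,T;Z_\Gamma')$ by \eqref{normal1}, and rewriting \eqref{ek7} bounds $\xi_{\Gamma,\kappa}$ in $L^2(0,T;V_\Gamma')$ and the combination $-\kappa\Delta_\Gamma u_{\Gamma,\kappa}+\xi_{\Gamma,\kappa}$ in $L^2(0,T;Z_\Gamma')$.

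With these uniform bounds, Banach--Alaoglu yields a subsequence $\kappa_k$ realizing all the weak and weak-star convergences \eqref{cv1k}, \eqref{cv4k}, \eqref{cv5k}, \eqref{cv6k}, \eqref{cv9k}, \eqref{cv10k}, \eqref{cv11k} (and the claimed regularity of the limit then follows by weak lower semicontinuity of the norms), while the Aubin--Lions--Simon lemma gives the strong convergences \eqref{cv2k} (from $H^1(0,T;H)\cap L^\infty(0,T;V)$ and $V\hookrightarrow\hookrightarrow H$) and \eqref{cv7k} (from $H^1(0,T;V_\Gamma')\cap L^\infty(0,T;Z_\Gamma)$ and $Z_\Gamma\hookrightarrow\hookrightarrow H_\Gamma\hookrightarrow V_\Gamma'$). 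Uniqueness of limits identifies the traces, giving \eqref{e2} and \eqref{e4}, and the $C([0,T];\cdot)$ convergences transfer the initial data \eqref{e5} and \eqref{e8}. The linear equation \eqref{ek1} passes to \eqref{e1}; passing to the limit in \eqref{ek1and6} and rewriting its bulk term by Green's formula together with $\Delta\mu=0$ produces \eqref{e6weak}. In \eqref{ek3} the Lipschitz term converges strongly by \eqref{cv2k}, and the inclusion $\xi\in\beta(u)$ is recovered by the standard maximal-monotonicity (demiclosedness) argument, since $u_{\kappa_k}\to u$ strongly in $L^2(Q)$ and $\xi_{\kappa_k}\rightharpoonup\xi$ weakly in $L^2(0,T;H)$.

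The delicate point, and the one I expect to be the main obstacle, is the boundary inclusion. Because $\xi_{\Gamma,\kappa}$ converges only weakly in the negative-order space $L^2(0,T;V_\Gamma')$, there is no strong pairing with $u_{\Gamma,\kappa}$, so the inclusion must be captured by the weak form \eqref{e7weak} rather than by pointwise demiclosedness. The identity in \eqref{e7weak} follows by testing \eqref{ek7} against $z_\Gamma\in Z_\Gamma$, grouping $\partial_{\boldsymbol{\nu}}u_\kappa$ and the $Z_\Gamma'$-valued combination $-\kappa\Delta_\Gamma u_{\Gamma,\kappa}+\xi_{\Gamma,\kappa}$, and using $\partial_{\boldsymbol{\nu}}u_{\kappa_k}\rightharpoonup\partial_{\boldsymbol{\nu}}u$ in $L^2(0,T;Z_\Gamma')$ (via \eqref{normal}, $\Delta u_{\kappa_k}\rightharpoonup\Delta u$ and $\nabla u_{\kappa_k}\rightharpoonup\nabla u$) together with \eqref{cv9k}, \eqref{cv11k} and the strong convergence of $\pi_\Gamma(u_{\Gamma,\kappa_k})$. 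For the inequality I would start from the subdifferential inequality for the convex functional $\Phi_\kappa(v):=\tfrac\kappa2\int_\Gamma|\nabla_\Gamma v|^2\dg+\int_\Gamma\widehat{\beta}_\Gamma(v)\dg$, whose subgradient at $u_{\Gamma,\kappa}$ contains $-\kappa\Delta_\Gamma u_{\Gamma,\kappa}+\xi_{\Gamma,\kappa}$, test it in time against a nonnegative function, and let $\kappa_k\to0$: the $\tfrac\kappa2\|\nabla_\Gamma\cdot\|^2$ terms drop out by \eqref{cv8k} (taking test states first in $V_\Gamma$ and relaxing to $Z_\Gamma$ by density), and $\int_\Gamma\widehat{\beta}_\Gamma(u_{\Gamma,\kappa_k})\dg$ is handled by lower semicontinuity. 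The genuinely hard step is the reverse inequality $\liminf_k\langle-\kappa_k\Delta_\Gamma u_{\Gamma,\kappa_k}+\xi_{\Gamma,\kappa_k},u_{\Gamma,\kappa_k}\rangle_{Z_\Gamma',Z_\Gamma}\ge\langle\xi_\Gamma,u_\Gamma\rangle_{Z_\Gamma',Z_\Gamma}$: I would re-express this duality through the equations, using Green's formula $\langle\partial_{\boldsymbol{\nu}}u_\kappa,u_{\Gamma,\kappa}\rangle=(\Delta u_\kappa,u_\kappa)_H+\|\nabla u_\kappa\|_H^2$ and \eqref{ek7}, so that every term except $\|\nabla u_{\kappa_k}\|_H^2$ converges by weak-strong products; the remaining gradient term then forces one to prove strong convergence $u_{\kappa_k}\to u$ in $L^2(0,T;V)$, via a limiting energy balance matched against weak lower semicontinuity. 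This is precisely the demiclosedness and $\limsup$ technique of \cite{CF20b, Sca19, CFS22}, here adapted to the degeneration of the surface diffusion as $\kappa\to0$.
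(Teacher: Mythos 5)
Your route is essentially the paper's (Yosida regularization, the $\partial_t u_\kappa$/$\mu_\kappa$ energy estimate, conservation of the boundary mean together with $m_\Gamma\in{\rm int}\,D(\beta_\Gamma)$, $L^2$ bounds on the nonlinearities, Aubin--Lions compactness, bulk demiclosedness, and a weak-form limit for the boundary inclusion), but the decisive step of your last paragraph is stated in the wrong direction, and the fix you propose for it would not go through. In passing to the limit in the subdifferential inequality for $\Phi_{\kappa}$, the troublesome term is $-\langle -\kappa_k\Delta_\Gamma u_{\Gamma,\kappa_k}+\xi_{\Gamma,\kappa_k},\,u_{\Gamma,\kappa_k}\rangle_{Z_\Gamma',Z_\Gamma}$, so what is needed is $\limsup_k\langle -\kappa_k\Delta_\Gamma u_{\Gamma,\kappa_k}+\xi_{\Gamma,\kappa_k},\,u_{\Gamma,\kappa_k}\rangle_{Z_\Gamma',Z_\Gamma}\le\int_0^T\langle\xi_\Gamma,u_\Gamma\rangle_{Z_\Gamma',Z_\Gamma}\dt$, not the $\liminf\ge$ inequality you announce. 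This is not cosmetic: once the duality is rewritten through the equations as you indicate, the energies $\iint_Q|\nabla u_{\kappa_k}|^2\dx\dt$, $\kappa_k\iint_\Sigma|\nabla_\Gamma u_{\Gamma,\kappa_k}|^2\dg\dt$ and $\iint_Q\xi_{\kappa_k}u_{\kappa_k}\dx\dt$ enter with precisely the sign for which weak lower semicontinuity (plus weak--strong products) yields the $\limsup\le$ bound --- this is \eqref{last3}, matched against \eqref{main1-3} tested with $(u,u_\Gamma)$. Pursuing your $\liminf\ge$ version instead forces $\limsup_k\iint_Q|\nabla u_{\kappa_k}|^2\dx\dt\le\iint_Q|\nabla u|^2\dx\dt$, i.e.\ strong convergence of $u_{\kappa_k}$ in $L^2(0,T;V)$, which is not available at this stage: the limiting energy balance you would match it against presupposes the identification $\xi_\Gamma\in\partial\widehat{\beta}_\Gamma(u_\Gamma)$ that you are in the course of proving, so the argument is circular (and indeed the theorem claims no strong $L^2(0,T;V)$ convergence, cf.\ \eqref{cv2k}).

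Three further points of rigor. The theorem assumes only {\rm (A1)}--{\rm (A5)}, so you may not invoke {\rm (A6)} to bound $\xi_\kappa$ in $L^2(0,T;H)$; Lemma~\ref{L5} obtains this from {\rm (A2)} alone, transferring \eqref{ccond} to the Yosida approximations and exploiting the product $\beta_{\Gamma,\lambda}(u_{\Gamma,\kappa})\beta_\lambda(u_{\Gamma,\kappa})\ge\frac1{2\varrho}|\beta_\lambda(u_{\Gamma,\kappa})|^2-C$ on the boundary; {\rm (A6)} is reserved for Corollary~\ref{cor1}. Your first energy estimate pairs $\partial_{\boldsymbol{\nu}}u_\kappa$ with $\partial_t u_{\Gamma,\kappa}$, which requires $\partial_t u_{\Gamma,\kappa}\in L^2(0,T;H_\Gamma)$; Proposition~\ref{ANA} does not provide this, and the paper inserts the additional viscous regularization $\tau$ in \eqref{ek1lam}--\eqref{ek8lam} precisely to legitimize that computation before letting $\tau\to0$. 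Finally, the relaxation of the test functions from $V_\Gamma$ to $Z_\Gamma$ cannot be done by generic density: one needs approximations $\zeta_{\Gamma,n}$ with $\widehat{\beta}_\Gamma(\zeta_{\Gamma,n})\le\widehat{\beta}_\Gamma(\zeta_\Gamma)$ (resolvents of $-\Delta_\Gamma$, as in \cite[Lemma~A.1]{CF16}) so that the right-hand side of \eqref{sub} stays under control.
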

\smallskip

\pier{The proof of this theorem is presented in the following subsection, after establishing the basic estimates in the current one.}
\smallskip

\pier{Arguing as in previous works~\cite{CF15, CF20, CF20b, CFS22, CFS22b, CFW20}, we employ the Yosida approximations
$\beta_\lambda$ of 
$\beta$ and 
$\beta_{\Gamma,\lambda}$ of 
$\beta_\Gamma$, with parameter $\lambda>0$: 
$\beta_\lambda $ and $\beta_{\Gamma, \lambda}$ are defined by 
\begin{gather*}
	\beta_\lambda (r):=\frac{1}{\lambda} \bigl( r-J_\lambda (r) \bigr) :=\frac{1}{\lambda} 
	\bigl( r-(I+\lambda \beta) ^{-1}(r)\bigr), \\
	\beta_{\Gamma,\lambda} (r):=\frac{1}{\lambda} \bigl( r-J_{\Gamma,\lambda} (r) \bigr) :=\frac{1}{\lambda} 
	\bigl( r-(I+\lambda \beta_\Gamma) ^{-1}(r)\bigr) \quad {\rm for~} r \in \mathbb{R}.
\end{gather*}
From the theory of maximal monotone operators (see, e.g., \cite{Bar10, Bre73}), 
it follows that $\beta_\lambda$ and $\beta_{\Gamma,\lambda}$ are Lipschitz continuous 
functions with Lipschitz constant  $1/\lambda$. Moreover, it holds that
\begin{gather*}
	\bigl| \beta_\lambda (r) \bigr| \le \bigl|\beta^{\circ}(r) \bigr|,
	\quad 
	0 \le \widehat{\beta}_\lambda(r) \le \widehat{\beta}(r), 
		\quad {\rm for~all~} r \in D(\beta),\\
	\bigl| \beta_{\Gamma,\lambda} (r) \bigr| \le \bigl|\beta_\Gamma^{\circ}(r) \bigr|, 
	\quad 
	0 \le \widehat{\beta}_{\Gamma,\lambda} (r) \le \widehat{\beta}_\Gamma(r)
	\quad {\rm for~all~} r \in D(\beta_\Gamma). 
\end{gather*}
Moreover, in order to make rigorous the first estimate we are showing, let us 
consider an additional approximation based on viscous Cahn--Hilliard equations in the bulk and on the boundary. 
The reason is that for the proof of the first estimate
we need the regularity $\partial _t u_{\Gamma, \kappa}\in L^2(0,T;H_\Gamma)$, which is not ensured by Proposition~\ref{ANA}. Then, we use the same approximation employed in \cite{CF20} and,
applying \cite[Theorem 2.2]{CF15} and \cite[Proposition~3.1]{CF20}, we see that 
for each $\tau, \lambda, \varepsilon \in (0,1]$, and $\kappa \in (0,1]$,
there exists a \pier{sextuple} $(u_\kappa,\mu_\kappa,\xi_\kappa, u_{\Gamma,\kappa}, 
\mu_{\Gamma,\kappa}, \xi_{\Gamma,\kappa})$
fulfilling at least that
\begin{gather*}
	u_\kappa \in H^1(0,T;H) \cap C\bigl( [0,T]; V \bigr) \cap L^2(0,T;W), \\
	\mu_\kappa \in L^2(0,T;W), \quad \xi_\kappa =\beta_\lambda (u_\kappa) \in L^2(0,T;V), \\
	u_{\Gamma,\kappa} \in H^1(0,T;H_\Gamma) \cap C\bigl( [0,T];V_\Gamma\bigr) \cap L^2(0,T;W_\Gamma), \\ 
	\mu_{\Gamma, \kappa} \in L^2(0,T;W_\Gamma), \quad \xi_{\Gamma,\kappa}=\beta_{\Gamma,\lambda}(u_{\Gamma,\kappa}) \in L^2(0,T;V_\Gamma)
\end{gather*}
and solving 
\begin{align}
 \tau \partial_t u_\kappa - \varepsilon \Delta \mu_\kappa =0 
  & \quad {\rm a.e.\ in ~ } Q, \label{ek1lam}\\
   \tau \mu_\kappa = \partial_t u_\kappa - \Delta u_\kappa + \beta_\lambda (u_\kappa) +\pi(u_\kappa) - f 
  & \quad {\rm a.e.\ in~} Q, \label{ek2lam}\\
   (\mu_\kappa)_{|_\Gamma} =\mu_{\Gamma,\kappa}
  & \quad {\rm a.e.\ on~} \Sigma, \label{ek3lam}\\
   (u_\kappa)_{|_\Gamma} = u_{\Gamma,\kappa} 
  & \quad {\rm a.e.\ on~} \Sigma, \label{ek4lam}\\ 
  \partial _t u_{\Gamma,\kappa} + \varepsilon \partial_{\boldsymbol{\nu}} \mu_\kappa 
  - \Delta_\Gamma \mu_{\Gamma,\kappa} = 0
  & \quad {\rm a.e.\ on~} \Sigma, \label{ek5lam}\\ 
  \mu_{\Gamma,\kappa} = \tau \partial_t u_{\Gamma,\kappa} 
  +  \partial_{\boldsymbol{\nu}} u_\kappa - \kappa \Delta _\Gamma u_{\Gamma,\kappa} 
  + \beta_{\Gamma, \lambda} (u_{\Gamma,\kappa}) + \pi_\Gamma(u_{\Gamma,\kappa}) - f_\Gamma
  & \quad {\rm a.e.\ on~} \Sigma, \label{ek6lam}\\
     u_\kappa(0) = u_0 & \quad {\rm a.e.\ in~} \Omega,\label{ek7lam} \\
  u_{\Gamma,\kappa} (0) = u_{0\Gamma} & 
    \quad {\rm a.e.\ on~} \Gamma.\label{ek8lam}
\end{align}
Here, the new terms with the coefficient $\tau$ in \eqref{ek1lam}, \eqref{ek2lam}, and \eqref{ek6lam} 
actually play a role of regularizing terms.  
Moreover, recalling the discussion in~\cite[Section 4.3]{CF20}, we can 
consider the limiting procedure $\tau \to 0$ keeping $\lambda >0$. 
In order to make clear the structure, we can also write this approximate system by 
\begin{align*}
&
\begin{pmatrix}
\tau & 0 \\
0 & 1 
\end{pmatrix}
\frac{\partial}{\partial t}
\begin{pmatrix}
u_\kappa \\
u_{\Gamma ,\kappa} 
\end{pmatrix}
+  
\begin{pmatrix}
-\varepsilon \Delta & 0 \\
\varepsilon \partial _{\boldsymbol{\nu}} & -\Delta_\Gamma  
\end{pmatrix}
\begin{pmatrix}
\mu_\kappa \\ 
\mu_{\Gamma, \kappa}
\end{pmatrix}
 = 
\begin{pmatrix}
0 \\ 0
\end{pmatrix}, \\
&
\begin{pmatrix}
\tau & 0 \\
0 & 1 
\end{pmatrix}
\begin{pmatrix}
\mu_\kappa \\
\mu_{\Gamma ,\kappa} 
\end{pmatrix}
=
\begin{pmatrix}
1 & 0 \\
0 & \tau
\end{pmatrix}  
\frac{\partial}{\partial t}
\begin{pmatrix}
u_\kappa \\
u_{\Gamma ,\kappa} 
\end{pmatrix}
+
\begin{pmatrix}
- \Delta & 0 \\
\partial _{\boldsymbol{\nu}} & -\kappa \Delta_\Gamma  
\end{pmatrix}
\begin{pmatrix}
u_\kappa \\ 
u_{\Gamma, \kappa}
\end{pmatrix}
\\
&\quad {}
+ 
\begin{pmatrix}
\beta _\lambda (u_\kappa) + \pi (u_\kappa) - f \\ 
\beta_{\Gamma, \lambda} (u_{\Gamma,\kappa}) + \pi_\Gamma (u_{\Gamma,\kappa}) - f _\Gamma 
\end{pmatrix}, \\
&\text{ with the initial condition }
\begin{pmatrix}
u_{\kappa} (0) \\
u_{\Gamma,\kappa} (0) 
\end{pmatrix}
=\begin{pmatrix}
u_{0} \\
u_{0\Gamma} 
\end{pmatrix}.
\end{align*}
From this, we see that the system is nothing but a viscous Cahn--Hilliard equation for the pair $(u_\kappa, u_{\Gamma, \kappa})^{\sf T}$ that in the sequel will be written as $(u_\kappa, u_{\Gamma, \kappa})$. The same applies to other pairs.}

\begin{lemma} \label{FE}
There exists a positive constant $M_1$, independent of $\tau, \lambda, \varepsilon$, and $\kappa$, such that
\begin{align*}
	&\|\partial_t u_\kappa \|_{L^2(0,T;H)} 
	+
	\sqrt{\tau}\|\partial_t u_{\Gamma,\kappa} \|_{L^2(0,T;H_\Gamma)}
	+ \| u _\kappa \|_{L^\infty (0,T;V)}  \pier{{}+ \| u_{\Gamma,\kappa}\|_{L^\infty(0,T;Z_\Gamma)}}
	\notag \\
	&{}
	+ \sqrt{\kappa} \pier{\| u_{\Gamma,\kappa}\|_{L^\infty(0,T;V_\Gamma)}}
	+ \bigl\| \widehat{\beta}_\lambda (u_\kappa) \bigr\|_{L^\infty(0,T;L^1(\Omega))}
	+  \bigl\| \widehat{\beta}_{\Gamma,\lambda} (u_{\Gamma,\kappa}) \bigr\|_{L^\infty(0,T;L^1(\Gamma))}
	\notag \\
	&{}
	+ \sqrt{\varepsilon} \|\nabla \mu_{\kappa}\|_{L^2(0,T;H)} 
	+ \|\nabla_\Gamma \mu_{\Gamma, \kappa}\|_{L^2(0,T;H_\Gamma)} \le M_1. 
\end{align*}
\end{lemma}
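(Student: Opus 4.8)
The plan is to carry out the standard energy estimate directly on the regularized system \eqref{ek1lam}--\eqref{ek8lam}, where the additional regularity (in particular $\partial_t u_{\Gamma,\kappa} \in L^2(0,T;H_\Gamma)$) makes every manipulation rigorous. First I would test the bulk chemical-potential equation \eqref{ek2lam} by $\partial_t u_\kappa$ in $H$ and the boundary equation \eqref{ek6lam} by $\partial_t u_{\Gamma,\kappa}$ in $H_\Gamma$, and add the two. Integrating by parts the term $-\Delta u_\kappa$ and using $\gamma_0\partial_t u_\kappa = \partial_t u_{\Gamma,\kappa}$ from \eqref{ek4lam}, the two contributions $\mp\int_\Gamma \partial_{\boldsymbol{\nu}} u_\kappa\,\partial_t u_{\Gamma,\kappa}\dg$ cancel. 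On the left-hand side I would eliminate the products $\tau(\mu_\kappa,\partial_t u_\kappa)_H$ and $(\mu_{\Gamma,\kappa},\partial_t u_{\Gamma,\kappa})_{H_\Gamma}$ by inserting \eqref{ek1lam} (so that $\tau\partial_t u_\kappa = \varepsilon\Delta\mu_\kappa$) and \eqref{ek5lam}; integrating by parts in $\Omega$ and on $\Gamma$ and recalling $\gamma_0\mu_\kappa = \mu_{\Gamma,\kappa}$ from \eqref{ek3lam}, the mixed terms $\pm\varepsilon\langle\partial_{\boldsymbol{\nu}}\mu_\kappa,\mu_{\Gamma,\kappa}\rangle$ cancel as well. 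This yields the energy identity in which the dissipation $\|\partial_t u_\kappa\|_H^2 + \tau\|\partial_t u_{\Gamma,\kappa}\|_{H_\Gamma}^2 + \varepsilon\|\nabla\mu_\kappa\|_H^2 + \|\nabla_\Gamma\mu_{\Gamma,\kappa}\|_{H_\Gamma}^2$ and $\tfrac{d}{dt}\mathcal E(t)$, with
\[
\mathcal E(t) := \tfrac12\|\nabla u_\kappa\|_H^2 + \tfrac\kappa2\|\nabla_\Gamma u_{\Gamma,\kappa}\|_{H_\Gamma}^2 + \int_\Omega\widehat\beta_\lambda(u_\kappa)\dx + \int_\Gamma\widehat\beta_{\Gamma,\lambda}(u_{\Gamma,\kappa})\dg ,
\]
are balanced by the four forcing terms arising from $\pi$, $\pi_\Gamma$, $f$ and $f_\Gamma$.

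Next I would integrate over $(0,t)$. The initial energy $\mathcal E(0)$ is bounded independently of the parameters via $\widehat\beta_\lambda\le\widehat\beta$, $\kappa\le 1$ and (A4). The bulk forcing terms $-(\pi(u_\kappa),\partial_t u_\kappa)_H$ and $(f,\partial_t u_\kappa)_H$ are handled by Young's inequality, absorbing a fraction of $\|\partial_t u_\kappa\|_{L^2(0,t;H)}^2$ and leaving, by the Lipschitz continuity (A3), a term $C\int_0^t(1+\|u_\kappa\|_H^2)$ destined for Gronwall. The boundary terms are more delicate, since only $\sqrt\tau\,\partial_t u_{\Gamma,\kappa}$ is under control: here I would integrate by parts in time, writing $(\pi_\Gamma(u_{\Gamma,\kappa}),\partial_t u_{\Gamma,\kappa})_{H_\Gamma}$ as the time derivative of the primitive of $\pi_\Gamma$ and using $f_\Gamma\in W^{1,1}(0,T;H_\Gamma)\hookrightarrow C([0,T];H_\Gamma)$ from (A5) to transfer the derivative onto $f_\Gamma$. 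This produces pointwise-in-time quantities $\|u_{\Gamma,\kappa}(t)\|_{H_\Gamma}^2$ together with integral remainders $\int_0^t\|\partial_t f_\Gamma\|_{H_\Gamma}\|u_{\Gamma,\kappa}\|_{H_\Gamma}$.

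The crux is to control these low-order $L^2$ norms uniformly in $\tau$ (and in $\lambda,\varepsilon,\kappa$). To this end I would derive the mass balance obtained by integrating \eqref{ek5lam} over $\Gamma$ and \eqref{ek1lam} over $\Omega$ and combining them (using $\int_\Omega\Delta\mu_\kappa = \int_\Gamma\partial_{\boldsymbol{\nu}}\mu_\kappa$): the quantity $\int_\Gamma u_{\Gamma,\kappa}(t) + \tau\int_\Omega u_\kappa(t)$ is conserved and equals its datum $m_0$. Since the weight $|\Gamma|+\tau|\Omega|$ stays bounded below by $|\Gamma|>0$ regardless of $\tau$, decomposing $u_\kappa$ into its bulk mean $\overline{u_\kappa}(t):=|\Omega|^{-1}\int_\Omega u_\kappa(t)\dx$ and its fluctuation and invoking the trace bound \eqref{statre1} and the Poincaré--Wirtinger inequality gives $|\overline{u_\kappa}(t)|\le C(1+\|\nabla u_\kappa(t)\|_H)$, whence, with \eqref{Poin0}, \eqref{comp2} and \eqref{statre1}, the uniform bounds $\|u_\kappa(t)\|_H^2 + \|u_{\Gamma,\kappa}(t)\|_{H_\Gamma}^2 \le C(1+\|\nabla u_\kappa(t)\|_H^2)$. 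Inserting these into the boundary pointwise terms (with a small Young coefficient for the $f_\Gamma$ part, and—should the full potential fail to be bounded below—after rewriting the $\pi_\Gamma$ primitive so that its negative part is absorbed into $\int_\Gamma\widehat\beta_{\Gamma,\lambda}(u_{\Gamma,\kappa})$ or into the Gronwall term), only a small multiple of $\|\nabla u_\kappa(t)\|_H^2$ survives, which is absorbed into $\mathcal E(t)$, and I would close the argument with Gronwall's lemma applied to $\mathcal E(t)+\|u_\kappa(t)\|_H^2$. Finally, $\|u_{\Gamma,\kappa}\|_{L^\infty(0,T;Z_\Gamma)}$ follows from \eqref{statre1} applied to the resulting $\|u_\kappa\|_{L^\infty(0,T;V)}$, the quantity $\sqrt\kappa\,\|u_{\Gamma,\kappa}\|_{L^\infty(0,T;V_\Gamma)}$ from the $\kappa$-term in $\mathcal E$ together with the $Z_\Gamma$ bound, and the $L^\infty$-in-time $L^1$ bounds on $\widehat\beta_\lambda(u_\kappa)$ and $\widehat\beta_{\Gamma,\lambda}(u_{\Gamma,\kappa})$ directly from $\mathcal E(t)$.

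I expect the main obstacle to be precisely this $\tau$-uniform closure: the boundary nonlinearity $\pi_\Gamma$ and the boundary datum $f_\Gamma$ cannot be paired with $\partial_t u_{\Gamma,\kappa}$ in $L^2$ (the coefficient would blow up like $1/\tau$), forcing integration by parts in time and the subsequent reabsorption of pointwise boundary $L^2$ norms. It is the mass-balance identity—and the fact that its weight does not degenerate as $\tau\to 0$—that makes the mean-value control, and hence the whole estimate, uniform in all four parameters.
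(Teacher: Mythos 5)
Your energy-estimate skeleton coincides with the paper's: the same testing of \eqref{ek2lam} by $\partial_t u_\kappa$, of \eqref{ek6lam} by $\partial_t u_{\Gamma,\kappa}$, and of \eqref{ek1lam}, \eqref{ek5lam} by $(\mu_\kappa,\mu_{\Gamma,\kappa})$, with the same two cancellations, the same use of the primitives of $\pi_\Gamma$, and the same integration by parts in time for the $f_\Gamma$-term. The divergence is in how you close the estimate, and there the argument has a genuine gap. After the boundary potential is handled via its primitive, the right-hand side contains the pointwise-in-time quantity $L_\Gamma\|u_{\Gamma,\kappa}(t)\|_{H_\Gamma}^2$ with a \emph{fixed, non-small} coefficient; via \eqref{comp2} this becomes $\delta\|u_\kappa(t)\|_V^2+C_\delta\|u_\kappa(t)\|_H^2$ with $C_\delta$ large. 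Your mass-balance identity is correct and does give $|\overline{u_\kappa}(t)|\le C(1+\|\nabla u_\kappa(t)\|_H)$ uniformly in $\tau$, but combined with Poincar\'e--Wirtinger it only yields $\|u_\kappa(t)\|_H^2\le C\bigl(1+\|\nabla u_\kappa(t)\|_H^2\bigr)$ with a constant $C$ determined by the geometry and the data, not by your choice of $\delta$. Hence $C_\delta\,C\,\|\nabla u_\kappa(t)\|_H^2$ appears on the right with a large coefficient and cannot be absorbed into the $\tfrac12\|\nabla u_\kappa(t)\|_H^2$ on the left. Neither of your fallbacks repairs this: absorption into $\int_\Gamma\widehat\beta_{\Gamma,\lambda}(u_{\Gamma,\kappa})$ requires a quadratic lower bound on $\widehat\beta_{\Gamma,\lambda}$ that fails in general (the paper explicitly allows $\beta_\Gamma\equiv 0$, and $\widehat\beta_{\Gamma,\lambda}\le\widehat\beta_\Gamma$), and a quantity evaluated at time $t$ cannot be shipped into a Gronwall integrand.

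The paper closes the loop differently, and this is the step you are missing: it writes
\begin{equation*}
\bigl\|u_\kappa(t)\bigr\|_H^2=\|u_0\|_H^2+2\int_0^t(\partial_t u_\kappa,u_\kappa)_H\ds
\le \|u_0\|_H^2+\delta'\int_0^t\|\partial_t u_\kappa\|_H^2\ds+C_{\delta'}\int_0^t\|u_\kappa\|_H^2\ds ,
\end{equation*}
so that the large multiple of $\|u_\kappa(t)\|_H^2$ is converted into a \emph{small} multiple of the time-integrated dissipation $\int_0^t\|\partial_t u_\kappa\|_H^2\ds$ (absorbable on the left, since $\delta'$ may be chosen after $C_\delta$ is fixed) plus a genuine Gronwall term; see \eqref{5th3-1} and the lines following it. No mass conservation is needed for this lemma --- the $\tau$-uniformity comes solely from the good signs of the $\tau$-terms and the exact cancellation of the cross products. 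Your mean-value observation is not wrong, and a variant of it is indeed used later (in Lemma~\ref{L4}, via \eqref{nec}), but as the closing device for Lemma~\ref{FE} it does not work; you should replace it by the fundamental-theorem-of-calculus absorption above.
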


\begin{proof} 
\pier{We test \eqref{ek2lam} by $\partial_t u_\kappa$ and integrate the resultant over $(0,t)$ with respect to the time variable $s$, obtaining}
\begin{align}
	& \int_0^t \! \! \int_\Omega |\partial_t u_\kappa |^2 
	\dx \ds
	+ \frac{1}{2} \int_\Omega \bigl| \nabla u_\kappa (t) \bigr|^2 
	\dx
	+ \int_\Omega \widehat{\beta}_\lambda \bigl( u_\kappa (t)\bigr) 
	\dx
	\notag \\
	& \quad {}
	+ \int_\Omega \widehat{\pi} \bigl( u_\kappa (t)\bigr) 
	\dx
	- \int_0^t \! \! \int_\Gamma \partial_{\boldsymbol{\nu}} u_\kappa \partial_t u_{\Gamma,\kappa} 
	\dg\ds
	-
	\tau \int_0^t \! \! \int_\Omega \mu_\kappa \partial _t u_\kappa 
	\dx \ds
	\notag \\ 
	 &{} =
	 \frac{1}{2} \int_\Omega | \nabla u_0 |^2 
	\dx
	+ \int_\Omega \widehat{\beta}_\lambda ( u_0) 
	\dx 
	+ \int_\Omega \widehat{\pi}( u_0) 
	\dx
	+ \int_0^t \! \! \int_\Omega f \partial _t u_\kappa 
	\dx \ds
	\label{1st3-1}
\end{align}
for all $t \in [0,T]$. \pier{Applying the same procedure to equation~\eqref{ek6lam}, 
tested with $\partial_t u_{\Gamma,\kappa} \in L^2(0,T;H_\Gamma)$, yields the following:
\begin{align}
	& \tau  \int_0^t \! \! \int_\Gamma |\partial_t u_{\Gamma,\kappa}|^2 \dg \ds 
	+ \frac{\kappa}{2} \int_\Gamma \bigl| \nabla_\Gamma u_{\Gamma,\kappa} (t) \bigr|^2 
	\dg + \int_\Gamma \widehat{\beta}_{\Gamma,\lambda} \bigl( u_{\Gamma,\kappa} (t)\bigr) 
	\dg 
	\notag \\
	& \quad {}+ \int_\Gamma \widehat{\pi}_\Gamma \bigl( u_{\Gamma,\kappa} (t)\bigr) 
	\dg 
	+ \int_0^t \! \! \int_\Gamma \partial_{\boldsymbol{\nu}} u_\kappa \partial_t u_{\Gamma,\kappa} 
	\dg \ds 
	- \int_0^t \! \! \int_\Gamma \mu_{\Gamma,\kappa} \partial_t u_{\Gamma,\kappa} 
	\dg \ds 
	\notag \\
	&{}= \frac{\kappa}{2} \int_\Gamma | \nabla_\Gamma u_{0\Gamma}|^2 
	\dg
	+ \int_\Gamma \widehat{\beta}_{\Gamma,\lambda} ( u_{0\Gamma}) 
	\dg 
	+ \int_\Gamma \widehat{\pi}_\Gamma ( u_{0\Gamma}) 
	\dg 
	+
	\int_0^t \! \! \int_\Gamma f_\Gamma \partial _t u_{\Gamma,\kappa} 
	\dg \ds. 
	\label{2nd3-1}
\end{align}
About the terms involving $\widehat{\pi}$ and $\widehat{\pi}_\Gamma$, we remark that from the assumption (A3) it follows that 
\begin{align*}
	\bigl| \widehat{\pi}(r) \bigr| & \le 
	\int_0^r \bigl| \pi (s) - \pi (0) \bigr| \ds + \int_0^r \bigl| \pi(0) \bigr| \ds 
	\notag \\	
	& \le \frac{L}{2} r^2 + \left( \frac{L}{2} r^2 + \frac{1}{2L} \bigl| \pi(0) \bigr|^2\right)
	= L |r|^2 + \frac{1}{2L} \bigl| \pi(0) \bigr|^2
	\quad {\rm for~all~} r \in \mathbb{R}
\end{align*}
and similar inequalities hold for $\widehat{\pi}_\Gamma$. Then there exists a constant $C_{\rm L}>0$ such that 
\begin{equation}
	\int_\Omega \bigl| \widehat{\pi}(z) \bigr| \dx \le L \|z\|_{H}^2 +C_{\rm L}, 
	\quad 
	\int_\Gamma \bigl| \widehat{\pi}_\Gamma(z_\Gamma) \bigr| \dg \le L_\Gamma\|z_\Gamma\|_{H_\Gamma}^2 +C_{\rm L},
	\label{L}
\end{equation}
for all $z \in H$ and $z_\Gamma \in H_\Gamma$, respectively. On the other hand,
for the last term of \eqref{2nd3-1} we note that}
\begin{align}
	& \int_0^t \! \! \int_\Gamma f_\Gamma \partial _t u_{\Gamma,\kappa} 
	\dg \ds 
	\notag \\
	& =  - \int_0^t \! \! \int_\Gamma \partial _t f_\Gamma \,  u_{\Gamma,\kappa} 
	\dg \ds 
	+ \int_\Gamma  f_\Gamma(t) u_{\Gamma,\kappa} (t)
	\dg
	- 
	\int_\Gamma  f_\Gamma(0) u_{0\Gamma}
	\dg \notag \\
	& \le \int_0^t \| \partial _t f_\Gamma \|_{H_\Gamma} \|u_{\Gamma,\kappa} \|_{H_\Gamma}
	\ds 
	+ \ \bigl\| f_\Gamma(t) \bigr\|_{H_\Gamma} \bigl\| u_{\Gamma,\kappa} (t) \bigr\|_{H_\Gamma}
	+ \bigl\| f_\Gamma(0) \bigr\|_{H_\Gamma}\| u_{0\Gamma} \|_{H_\Gamma}.
	\label{3rd3-1}
\end{align}
Next, \pier{multiplying \eqref{ek1lam} by $\mu_\kappa$, \eqref{ek5lam} by $\mu_{\Gamma,\kappa}$ and using \eqref{ek3lam} we infer that} 
\begin{align}
	& -\tau  \int_\Omega \partial_t u_\kappa \mu_\kappa  \dx 
	-  \int_\Gamma \partial_t u_{\Gamma, \kappa} \mu_{\Gamma, \kappa}  \dg
	 = \varepsilon  \int_\Omega |\nabla \mu_\kappa |^2 \dx 
	+ \int_\Gamma |\nabla _\Gamma \mu_{\Gamma,\kappa }|^2 \dg. 
	\label{4th3-1}
\end{align}
\pier{Then, we integrate \eqref{4th3-1} over \pier{$(0,t)$} with respect to the time variable
and take advantage of \eqref{1st3-1}--\eqref{3rd3-1}. 
Then, summing and adding $(1/2) \int_\Omega |u_\kappa(t)|^2 \dx$ to both sides, thanks to the properties of the Moreau--Yosida regularizations and Young's inequality we deduce that
\begin{align}
	& \frac{1}{2}\int_0^t \! \! \int_\Omega |\partial_t u_\kappa |^2 
	\dx \ds 
	+ \frac{1}{2}\bigl\| u_\kappa (t) \bigr\|_{V}^2
	+ \int_\Omega \widehat{\beta}_\lambda \bigl( u_\kappa (t)\bigr) 
	\dx 
	\notag \\
	& \quad {}
	+\tau \int_0^t \! \! \int_\Gamma |\partial_t u_{\Gamma,\kappa}|^2 \dg \ds 
	+\frac{\kappa}{2} \int_\Gamma \bigl| \nabla_\Gamma u_{\Gamma,\kappa} (t) \bigr|^2 
	\dg 
	+\int_\Gamma \widehat{\beta}_{\Gamma,\lambda} \bigl( u_{\Gamma,\kappa} (t)\bigr) 
	\dg 
	\notag \\
	& \quad {}
	+
	\varepsilon \int_0^t \! \! \int_\Omega |\nabla \mu_\kappa |^2 \dx \ds 	
	+ \int_0^t \!\! \int_\Gamma |\nabla _\Gamma \mu_{\Gamma,\kappa }|^2 \dg \ds
	\notag \\ 
	 & \le 
	 \frac{1}{2} \int_\Omega \bigl| u_\kappa(t) \bigr|^2 \dx
	 + 
	\frac{1}{2} \int_\Omega | \nabla u_0 |^2 
	\dx 
	+ \int_\Omega \widehat{\beta} (u_0) 
	\dx 
	+ L \|u_0\|^2_H + L  \| u_\kappa (t)\|^2_H +2 C_L 
	\notag \\
	& \quad {} 
	+
	\frac{\kappa}{2} \int_\Gamma | \nabla_\Gamma u_{0\Gamma}|^2 
	\dg 
	+\int_\Gamma \widehat{\beta}_{\Gamma} ( u_{0\Gamma}) 
	\dg  
	+
	L_\Gamma\|u_{0\Gamma}\|_{H_\Gamma}^2
	+
	L_\Gamma\|u_{\Gamma,\kappa} (t)\|_{H_\Gamma}^2 + 2 C_{\rm L}
	\notag \\
	& \quad {}
	+\frac{1}{2}
	\int_0^t \! \! \int_\Omega \|f\|^2 
	\dx \ds 
	+
	\int_0^t \| \partial _t f_\Gamma \|_{H_\Gamma} \|u_{\Gamma,\kappa} \|_{H_\Gamma}
	\ds 
	\notag \\
	& \quad {}
	+ \| f_\Gamma \|_{L^\infty(0,T; H_\Gamma)}\bigl( \| u_{\Gamma,\kappa} (t) \bigr\|_{H_\Gamma} +
   \| u_{0\Gamma} \|_{H_\Gamma} \bigr)
	\notag \\
	& \le 
	C + C  \bigl\| u_\kappa(t) \bigr\|_H^2 
	+ C \bigl\| u_{\Gamma,\kappa}(t) \bigr\|_{H_\Gamma}^2 
	+ C_{\rm tr} \int_0^t \| \partial _t f_\Gamma \|_{H_\Gamma} \|u_{\kappa} \|_{V}
	\ds
	\label{5th3-1}
\end{align}
for all $t \in [0,T]$,  where in the last inequality we have used the assumptions (A4), (A5) and the inequality~\eqref{statre1}. 
Let us discuss the treatment of the terms in the right-hand side. Note that
\begin{align*}
	&C \bigl\| u_\kappa(t) \bigr\|_H^2 
     = C \biggl(\int_0^t 2 ( 
	\partial_t u_\kappa, u_\kappa)_H \ds + \|u_0\|_H^2 \biggr)
	\notag \\
	& \le {\delta} \int_0^t 
	\| \partial_t u_\kappa\|_H^2 \ds 
	+ C_\delta
	\int_0^t \| u_\kappa \|_H^2 \ds
	+ C
\end{align*}
for all $t \in [0,T]$ and some $\delta >0$. In addition, using \eqref{comp2} we can infer that 
\begin{align*}
	 &C \bigl\| u_{\Gamma,\kappa}(t) \bigr\|_{H_\Gamma}^2 \leq 
	\delta \bigl\| u_\kappa(t) \bigr\|_V^2 + C_\delta \bigl\| u_\kappa(t) \bigr\|_H^2 
     \\ 
	& \le \delta \bigl\| u_\kappa(t) \bigr\|_V^2 + {\delta} \int_0^t 
	\| \partial_t u_\kappa\|_H^2 \ds 
	+ C_\delta
	\int_0^t \| u_\kappa \|_H^2 \ds 
	+ C_\delta .
\end{align*}
Hence, choosing $\delta$ small enough, from \eqref{5th3-1} it is straightforward to obtain in particular 
that
\begin{equation*}
	\bigl\| u_\kappa (t) \bigr\|_{V}^2
	\le M_1' \left( 1+ \int_0^t \| u_\kappa \|_V^2 
	\ds + \int_0^t \| \partial _t f_\Gamma \|_{H_\Gamma} \|u_{\kappa} \|_{V}
	\ds
	\right)
\end{equation*}
for all $t \in [0,T]$, 
where $M_1'>0$ is a constant independent of $\tau, \lambda, \varepsilon$, and $\kappa$. 
Now, as from the assumption (A5) we have that $ \|\partial _t f_\Gamma(\cdot)\|_{H_\Gamma} \in L^1(0,T)$, by applying a combination of the two Gronwall lemmas reported in \cite[Appendix, \takeshi{pp.~156--157}]{Bre73}, we find that \takeshi{$\| u_\kappa \|_V$} is uniformly bounded in $L^\infty (0,T)$. Consequently, observing that (cf.~\eqref{ek4lam}) $\|u_{\Gamma,\kappa}\|_{L^\infty(0,T;Z_\Gamma)}$ is uniformly bounded as well and  using again \eqref{5th3-1}, we easily conclude the proof of the lemma.}
\end{proof}
\smallskip

The role of the approximation by $\tau>0$ \pier{was that of guaranteeing the regularity of solutions in order to prove the above lemma in a rigorous way. Now, based on the results of~\cite{CF20}, we know that letting $\tau \to 0$ and keeping  $\lambda, \varepsilon,\kappa \in (0,1]$ fixed, we obtain the limit problem on which we can perform the next estimates (cf.~Lemmas~\ref{L3}--\ref{L6}) directly. Let us recall the limit problem with  $\lambda, \varepsilon,\kappa \in (0,1]$:}
\begin{align}
   \partial_t u_\kappa - \Delta u_\kappa + \beta_\lambda (u_\kappa) +\pi(u_\kappa) = f 
   \quad {\rm a.e.\ in~} Q, \label{ek3lamb}\\
   (u_\kappa)_{|_\Gamma} = u_{\Gamma,\kappa} 
   \quad {\rm a.e.\ on~} \Sigma, \label{ek4lamb}\\ 
    (\mu_\kappa)_{|_\Gamma} =\mu_{\Gamma,\kappa}
   \quad {\rm a.e.\ on~} \Sigma, \label{ek2lamb}\\
\noalign{\smallskip}
	\bigl\langle \partial _t u_{\Gamma,\kappa}(t),z_\Gamma \bigr\rangle_{V_\Gamma',V_\Gamma}
	+ \varepsilon \int_\Omega \nabla \mu_\kappa (t) \cdot \nabla z \dx 
	+ \int_\Gamma \nabla _\Gamma \mu_{\Gamma,\kappa}(t) \cdot \nabla_\Gamma z_\Gamma \dg
	=0 
	\notag
	\\
	\text{for all } (z,z_\Gamma) \in \boldsymbol{V}, 
	\quad \text{for~a.a.\ } t \in (0,T),
	\label{ek1lamb}\\
\noalign{\smallskip}
  \mu_{\Gamma,\kappa} = 
  \partial_{\boldsymbol{\nu}} u_\kappa - \kappa \Delta _\Gamma u_{\Gamma,\kappa} 
  + \beta_{\Gamma, \lambda} (u_{\Gamma,\kappa}) + \pi_\Gamma(u_{\Gamma,\kappa}) - f_\Gamma
  \quad {\rm a.e.\ on~} \Sigma, \label{ek6lamb}
  \\
   u_\kappa(0) = u_0  \quad {\rm a.e.\ in~} \Omega,\label{ek7lamb}
   \\
  u_{\Gamma,\kappa} (0) = u_{0\Gamma} 
    \quad {\rm a.e.\ on~} \Gamma.\label{ek8lamb}
\end{align}
\pier{Of course, for the solution to~\eqref{ek3lamb}--\eqref{ek8lamb} the estimates stated in Lemma~\ref{FE} still hold. Note however that the regularity of $u_{\Gamma,\kappa}$ is here replaced 
by (cf.~Proposition~\ref{ANA})}
\begin{equation*}
	u_{\Gamma,\kappa} \in H^1(0,T;V_\Gamma') \cap L^\infty(0,T;V_\Gamma) \cap L^2(0,T;W_\Gamma).
\end{equation*}

\begin{lemma}
\label{L3}
There exists a positive constant $M_2$, independent of $\lambda, \varepsilon$, and $\kappa$, such that
\begin{align*}
	\|\partial_t u_{\Gamma, \kappa} \|_{L^2(0,T;V_\Gamma')} \le M_2. 
\end{align*}
\end{lemma}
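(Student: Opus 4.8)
The plan is to bound $\partial_t u_{\Gamma,\kappa}$ in $L^2(0,T;V_\Gamma')$ by testing the weak boundary equation \eqref{ek1lamb} against an arbitrary admissible pair and controlling the resulting terms using the uniform bounds already secured in Lemma~\ref{FE}. The key observation is that the duality $\langle \partial_t u_{\Gamma,\kappa}(t), z_\Gamma\rangle_{V_\Gamma',V_\Gamma}$ is completely determined by \eqref{ek1lamb}, so I would fix a test element $z_\Gamma \in V_\Gamma$ and select a suitable $z \in V$ with $z_{|_\Gamma}=z_\Gamma$, making $(z,z_\Gamma)\in\boldsymbol{V}$ admissible. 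A convenient choice is the harmonic (or otherwise bounded) lifting of $z_\Gamma$, so that $\|z\|_V \le C\|z_\Gamma\|_{V_\Gamma}$ by the trace and lifting estimates recalled in Section~\ref{main}.

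First I would rewrite \eqref{ek1lamb} as
\begin{equation*}
	\bigl\langle \partial_t u_{\Gamma,\kappa}(t),z_\Gamma \bigr\rangle_{V_\Gamma',V_\Gamma}
	= -\varepsilon \int_\Omega \nabla \mu_\kappa(t)\cdot \nabla z \dx
	- \int_\Gamma \nabla_\Gamma \mu_{\Gamma,\kappa}(t)\cdot \nabla_\Gamma z_\Gamma \dg,
\end{equation*}
and estimate the right-hand side by Cauchy--Schwarz:
\begin{equation*}
	\bigl| \langle \partial_t u_{\Gamma,\kappa}(t),z_\Gamma \rangle_{V_\Gamma',V_\Gamma} \bigr|
	\le \varepsilon \|\nabla \mu_\kappa(t)\|_H \, \|\nabla z\|_H
	+ \|\nabla_\Gamma \mu_{\Gamma,\kappa}(t)\|_{H_\Gamma}\, \|\nabla_\Gamma z_\Gamma\|_{H_\Gamma}.
\end{equation*}
Using $\|z\|_V \le C\|z_\Gamma\|_{V_\Gamma}$ and $\varepsilon \le 1$, the right-hand side is bounded by $C\bigl(\sqrt{\varepsilon}\|\nabla\mu_\kappa(t)\|_H + \|\nabla_\Gamma\mu_{\Gamma,\kappa}(t)\|_{H_\Gamma}\bigr)\|z_\Gamma\|_{V_\Gamma}$, so that by the definition of the dual norm
\begin{equation*}
	\|\partial_t u_{\Gamma,\kappa}(t)\|_{V_\Gamma'}
	\le C\bigl(\sqrt{\varepsilon}\,\|\nabla\mu_\kappa(t)\|_H + \|\nabla_\Gamma\mu_{\Gamma,\kappa}(t)\|_{H_\Gamma}\bigr).
\end{equation*}
Squaring and integrating over $(0,T)$ then yields
\begin{equation*}
	\|\partial_t u_{\Gamma,\kappa}\|_{L^2(0,T;V_\Gamma')}^2
	\le C\bigl(\varepsilon\|\nabla\mu_\kappa\|_{L^2(0,T;H)}^2 + \|\nabla_\Gamma\mu_{\Gamma,\kappa}\|_{L^2(0,T;H_\Gamma)}^2\bigr) \le C M_1^2,
\end{equation*}
where the last step invokes precisely the terms $\sqrt{\varepsilon}\|\nabla\mu_\kappa\|_{L^2(0,T;H)}$ and $\|\nabla_\Gamma\mu_{\Gamma,\kappa}\|_{L^2(0,T;H_\Gamma)}$ bounded by $M_1$ in Lemma~\ref{FE}. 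Setting $M_2 := \sqrt{C}\,M_1$ completes the argument, and the constant is manifestly independent of $\lambda,\varepsilon,\kappa$ since it depends only on the lifting constant and $M_1$.

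The only delicate point is ensuring the lifting $z$ of $z_\Gamma$ can be chosen with $\|z\|_V\le C\|z_\Gamma\|_{V_\Gamma}$ by a constant independent of all parameters; this is standard since the lifting depends solely on $\Omega$ (one may use the recovering operator $\mathcal{R}$ of \eqref{recov} extended appropriately to $V_\Gamma$, or a harmonic extension). It is worth emphasizing that the crucial structural feature making the estimate uniform is the factor $\sqrt{\varepsilon}$ accompanying $\|\nabla\mu_\kappa\|_{L^2(0,T;H)}$ in Lemma~\ref{FE}: it exactly matches the $\varepsilon$ weighting in \eqref{ek1lamb}, so the bulk contribution $\varepsilon\|\nabla\mu_\kappa\|^2$ remains controlled even as $\varepsilon\to 0$.
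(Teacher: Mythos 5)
Your proposal is correct and follows essentially the same route as the paper: test the weak identity \eqref{ek1lamb} with $(z,z_\Gamma)$ where $z$ is a bounded lifting of $z_\Gamma$ (the paper uses the recovering operator ${\mathcal R}$ of \eqref{recov} together with $V_\Gamma\hookrightarrow Z_\Gamma$), apply Cauchy--Schwarz, and absorb the bulk term via the factor $\sqrt{\varepsilon}\le 1$ using precisely the bounds $\sqrt{\varepsilon}\|\nabla\mu_\kappa\|_{L^2(0,T;H)}+\|\nabla_\Gamma\mu_{\Gamma,\kappa}\|_{L^2(0,T;H_\Gamma)}\le M_1$ from Lemma~\ref{FE}. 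The only cosmetic difference is that you derive a pointwise-in-time dual-norm bound and then integrate, while the paper works directly with time-dependent test functions $\zeta_\Gamma\in L^2(0,T;V_\Gamma)$; the two are equivalent.
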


\begin{proof}
Taking an \pier{arbitrary function $\zeta _\Gamma \in L^2(0,T;V_\Gamma)$, we choose
$(z,z_\Gamma)=({\mathcal R} \zeta_\Gamma(s), \zeta_\Gamma(s))$ as test function in \eqref{ek1lamb}, 
where ${\mathcal R}:Z_\Gamma \to V$ is the recovering operator specified by \eqref{recov} and it satisfies the estimate 
\begin{equation}
	\| {\mathcal R} z_\Gamma \|_{V} \le C_{\mathcal R} \| z_\Gamma \|_{Z_\Gamma} 
	\quad {\rm for~all~} z_\Gamma \in Z_\Gamma,
	\label{recover}
\end{equation} 
for some constant $C_{\mathcal R}>0$.
Now, integrating the resultant over $(0,T)$ with respect to the time variable $s$, and using Lemma~\eqref{FE} we obtain
\begin{align*}
	\left| \int_0^T \langle \partial _t u_{\Gamma, \kappa}, 
	\zeta_\Gamma \rangle_{V'_\Gamma,V_\Gamma} \ds 
	\right|
	& \le 
	\varepsilon \int_0^T \!\! \int_\Omega 
	|\nabla \mu_\kappa || \nabla {\mathcal R} \zeta_\Gamma| \dx \ds 
	+ \int_0^T \!\! \int_\Gamma 
	| \nabla _\Gamma \mu_{\Gamma,\kappa}| |\nabla _\Gamma \zeta_\Gamma| \dg \ds 
	\notag \\
	& \le \sqrt{\varepsilon} M_1 C_{\mathcal R} \|\zeta_\Gamma\|_{L^2(0,T;Z_\Gamma)}
	+  M_1 \|\zeta_\Gamma\|_{L^2(0,T;V_\Gamma)} 
	\notag \\
	& \le M_2 \|\zeta_\Gamma\|_{L^2(0,T;V_\Gamma)},
\end{align*}
where $M_2$ is a positive constant independent of $\lambda, \varepsilon$, and $\kappa$.
The proof is complete.} 
\end{proof}
\smallskip

\begin{lemma}
\label{L4}
There exist two positive constants $M_3$ and $M_4$, independent of $\lambda, \varepsilon$, and $\kappa$, such that
\begin{gather*}
	\bigl\| \beta_\lambda (u_\kappa) \bigr\|_{L^2(0,T;L^1(\Omega))} + 
	\bigl\| \beta_{\Gamma,\lambda} (u_{\Gamma,\kappa}) \bigr\|_{L^2(0,T;L^1(\Gamma))} \le M_3, 
	\notag \\
	\pier{\sqrt{\varepsilon} \|\mu_\kappa\| _{L^2(0,T;V)} + \|\mu_{\Gamma, \kappa} \|_{L^2(0,T;V_\Gamma)}}  \le M_4. 
\end{gather*}
\end{lemma}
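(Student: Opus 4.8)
The plan is to estimate the mean values of $\mu_{\Gamma,\kappa}$ and the $L^1$-norms of the nonlinearities by testing the equations with suitable functions, then recover the full $V$- and $V_\Gamma$-norms using the Poincaré--Wirtinger inequalities \eqref{Poin1} and \eqref{Poin2} together with the gradient bounds already granted by Lemma~\ref{FE}. First I would derive the mass conservation on the boundary: testing \eqref{ek1lamb} with the pair $(z,z_\Gamma)=(\mathbf{1},1)$ (the constant function, which belongs to $\boldsymbol{V}$) kills both gradient terms and shows that $\frac{d}{dt}\int_\Gamma u_{\Gamma,\kappa}\dg=0$, so the spatial mean of $u_{\Gamma,\kappa}(t)$ equals $m_\Gamma$ for all $t$. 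By assumption (A4), $m_\Gamma\in\mathrm{int}\,D(\beta_\Gamma)$, which is exactly what is needed to control the monotone term.

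The key step is the classical trick for bounding $\beta_{\Gamma,\lambda}(u_{\Gamma,\kappa})$ in $L^1(\Gamma)$. Since $m_\Gamma$ lies in the interior of $D(\beta_\Gamma)$, there exist constants $\delta_0>0$ and $C_0>0$ such that the monotonicity inequality
\begin{equation*}
	\beta_{\Gamma,\lambda}(r)(r-m_\Gamma)\ge \delta_0\bigl|\beta_{\Gamma,\lambda}(r)\bigr|-C_0
	\quad\text{for all }r\in\mathbb{R},
\end{equation*}
holds uniformly in $\lambda$ (see, e.g., the arguments in \cite{CF15, CFS22}). Testing \eqref{ek6lamb} with $(u_{\Gamma,\kappa}-m_\Gamma)$ on $\Gamma$, using this inequality and integrating over $(0,T)$, I expect to bound $\|\beta_{\Gamma,\lambda}(u_{\Gamma,\kappa})\|_{L^2(0,T;L^1(\Gamma))}$: the terms $\partial_{\boldsymbol{\nu}}u_\kappa$, $\kappa\Delta_\Gamma u_{\Gamma,\kappa}$, $\pi_\Gamma(u_{\Gamma,\kappa})$, $f_\Gamma$, and $\mu_{\Gamma,\kappa}$ are all controlled after integrating by parts in the $\kappa$-term (which produces $\kappa\int_\Gamma|\nabla_\Gamma u_{\Gamma,\kappa}|^2\ge0$) and invoking the $L^\infty(0,T;V)$ and $L^\infty(0,T;Z_\Gamma)$ bounds from Lemma~\ref{FE} together with the trace estimate and \eqref{normal2}. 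The bulk estimate for $\beta_\lambda(u_\kappa)$ follows analogously by testing \eqref{ek3lamb} with $u_\kappa-m_\Gamma$ (or an analogous shift), and the domination assumption (A2), encoded in \eqref{ccond}, transfers the boundary control to the bulk.

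To obtain the mean value of $\mu_{\Gamma,\kappa}$, I would integrate \eqref{ek6lamb} over $\Gamma$, so that the mean of $\mu_{\Gamma,\kappa}$ is expressed through $\int_\Gamma\beta_{\Gamma,\lambda}(u_{\Gamma,\kappa})\dg$, $\int_\Gamma\partial_{\boldsymbol{\nu}}u_\kappa\dg$, and the remaining lower-order terms, all now under control from the previous step. Having the mean of $\mu_{\Gamma,\kappa}$ in $L^2(0,T)$ and its gradient in $L^2(0,T;H_\Gamma)$ from Lemma~\ref{FE}, the Poincaré--Wirtinger inequality \eqref{Poin2} yields the full $L^2(0,T;V_\Gamma)$ bound on $\mu_{\Gamma,\kappa}$. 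For the bulk potential $\mu_\kappa$, the factor $\sqrt\varepsilon$ appears because only $\sqrt\varepsilon\|\nabla\mu_\kappa\|_{L^2(0,T;H)}$ is uniformly bounded; combining this gradient bound with a mean-value estimate for $\mu_\kappa$ (obtained from the trace relation \eqref{ek2lamb} linking $\mu_\kappa|_\Gamma$ to $\mu_{\Gamma,\kappa}$ and inequality \eqref{Poin0}) gives the stated bound on $\sqrt\varepsilon\|\mu_\kappa\|_{L^2(0,T;V)}$.

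The main obstacle I anticipate is making the $L^1$ estimate of the nonlinearities genuinely uniform in $\varepsilon$ and $\kappa$ while keeping the $\varepsilon$-weights consistent: the coupling through $\partial_{\boldsymbol{\nu}}u_\kappa$ between bulk and boundary means the normal derivative term must be estimated carefully (via \eqref{normal} or \eqref{normal2}) without inadvertently picking up a negative power of $\varepsilon$, and one must ensure that the $\kappa$-dependent surface diffusion contributes with the correct sign rather than deteriorating the bound as $\kappa\searrow0$.
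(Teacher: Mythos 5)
Your overall strategy (test the boundary equation by $u_{\Gamma,\kappa}-m_\Gamma$, use the interior-point inequality for the Yosida approximations, then recover full norms by Poincar\'e--Wirtinger) is the right one, but two steps are glossed over in a way that hides the actual difficulties, and the specific tools you name for them would not work. First, the normal derivative term: you propose to control $\int_\Gamma \partial_{\boldsymbol{\nu}} u_\kappa\,(u_{\Gamma,\kappa}-m_\Gamma)\dg$ ``via the trace estimate and \eqref{normal2}'', but \eqref{normal2} (and likewise \eqref{normal1}) requires a bound on $\|\Delta u_\kappa\|_{H}$, which at this stage is \emph{not} yet available uniformly --- that is precisely the content of Lemma~\ref{L5}, which is proved afterwards and uses the present lemma; the $H^{3/2}$ bound only comes with a factor $\sqrt{\kappa}$ (Lemma~\ref{L6}). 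The proof in the paper instead converts this term by Green's formula into $\int_\Omega \Delta u_\kappa (u_\kappa - m_\Gamma)\dx + \int_\Omega |\nabla u_\kappa|^2\dx$ and then substitutes $\Delta u_\kappa$ from the bulk equation \eqref{ek3lamb}; equivalently, one adds the bulk equation tested by $u_\kappa - m_\Gamma$ so that the normal derivative cancels and, as a bonus, the term $\int_\Omega \beta_\lambda(u_\kappa)(u_\kappa-m_\Gamma)\dx$ appears and yields the bulk $L^1$ bound in the same stroke (no appeal to (A2) is needed here). The same substitution is needed later for $\int_\Gamma \partial_{\boldsymbol{\nu}} u_\kappa \dg = \int_\Omega \Delta u_\kappa \dx$ when you estimate the mean of $\mu_{\Gamma,\kappa}$; as written, that integral is not ``under control from the previous step''.

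Second, the term $\int_\Gamma \mu_{\Gamma,\kappa}(u_{\Gamma,\kappa}-m_\Gamma)\dg$ on the right-hand side of the tested boundary equation is the crux of the argument, and you list $\mu_{\Gamma,\kappa}$ among the quantities ``controlled by Lemma~\ref{FE}'' --- but Lemma~\ref{FE} only bounds $\nabla_\Gamma \mu_{\Gamma,\kappa}$, and the mean of $\mu_{\Gamma,\kappa}$ is exactly what this lemma is trying to establish, so a direct appeal is circular. One must exploit the zero-mean property $\int_\Gamma (u_{\Gamma,\kappa}-m_\Gamma)\dg = 0$: either observe that the constant part of $\mu_{\Gamma,\kappa}$ pairs to zero and apply Poincar\'e--Wirtinger to the remainder, or, as the paper does, introduce the auxiliary elliptic problem \eqref{aux} and rewrite the pairing as $-\langle \partial_t u_{\Gamma,\kappa}, y_{\Gamma,\varepsilon}\rangle_{V_\Gamma',V_\Gamma}$, controlled via Lemma~\ref{L3}. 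Your closing paragraph correctly identifies the normal derivative as the danger point, but the remedy you propose there is the one that fails; the fix is cancellation/substitution through the bulk equation, not a direct trace estimate.
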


\begin{proof}
We test \eqref{ek6lamb} by $u_{\Gamma,\kappa}-m_\Gamma$\pier{, where $m_\Gamma$ is defined in (A4),} and recover 
\begin{align}
	& \int_\Gamma \partial _{\boldsymbol{\nu}} u_\kappa (u_{\Gamma,\kappa} -m_\Gamma) \dg
	+ \kappa \int_\Gamma |\nabla _\Gamma u_{\Gamma,\kappa} |^2 \dg + 
	\int_\Gamma \beta_{\Gamma,\kappa}(u_{\Gamma,\kappa}) (u_{\Gamma,\kappa}-m_\Gamma) \dg 
	\notag \\
	& \quad {}+\int_\Gamma \bigl( \pi_\Gamma (u_{\Gamma,\kappa} )
	-f_\Gamma \bigr) (u_{\Gamma,\kappa}-m_\Gamma) \dg  
	= \int _\Gamma \mu_{\Gamma,\kappa} (u_{\Gamma,\kappa}-m_\Gamma) \dg  
	\label{m1}
\end{align}
a.e.\ in $(0,T)$. 
Now, thanks to \eqref{ek3lamb} and \eqref{ek4lamb}, we have that
\begin{align}
	& \int_\Gamma \partial _{\boldsymbol{\nu}} u_\kappa (u_{\Gamma,\kappa} -m_\Gamma ) \dg \notag \\ 
	& = \int_\Omega \Delta u_\kappa (u_\kappa-m_\Gamma) \dx 
	+ \int _\Omega |\nabla u_\kappa |^2 \dx 
	\notag \\
	& = \int_\Omega \bigl( \partial_t u_\kappa +\beta_\lambda (u_\kappa) + \pi(u_\kappa)-f \bigr) (u_\kappa -m_\Gamma ) \dx
	+ \int _\Omega |\nabla u_\kappa |^2 \dx 
	\label{m2}
\end{align}
a.e.\ in $(0,T)$. From \eqref{ek1lamb} and the assumption (A4) it is easy check that 
\begin{equation}
	\langle u_{\Gamma, \kappa}-m_\Gamma, 1 \rangle_{V_\Gamma',V_\Gamma} =\int_\Gamma (u_{\Gamma, \kappa}-m_\Gamma)\dg = 0
	\label{nec}
\end{equation}
in $(0,T)$. 
Here, we denote by $(y_\varepsilon, y_{\Gamma,\varepsilon}) \in \pier{L^2(0,T;\boldsymbol{V})}$, the solution to \pier{the variational equality
\begin{equation}
	\varepsilon \int_\Omega \nabla y_\varepsilon \cdot \nabla z \dx 
	+ \int_\Gamma \nabla _\Gamma y_{\Gamma, \varepsilon} \cdot \nabla _\Gamma z_\Gamma \dg
	= 
	\int_\Gamma (u_{\Gamma,\kappa} - m_\Gamma ) z_\Gamma \dg
	\label{aux}
\end{equation}
for all $(z,z_\Gamma) \in \boldsymbol{V}$, complemented with
$\int_\Gamma y_{\Gamma,\varepsilon} \dg=0$, almost everywhere in $(0,T)$. We underline that
the condition \eqref{nec} is necessary to solve \eqref{aux}. 
Taking $(z,z_\Gamma)=(y_\varepsilon, y_{\Gamma,\varepsilon})$ in \eqref{aux}, and 
using Poincar\'e inequalities~\eqref{Poin1} and~\eqref{Poin2}
we find out that} 
\begin{align*}
	\varepsilon \|\nabla y_\varepsilon\|_H^2 + \|\nabla _\Gamma y_{\Gamma,\varepsilon}\|_{H_\Gamma}^2 
	& \le 
	\|y_{\Gamma,\varepsilon}\|_{H_\Gamma}\|u_{\Gamma,\kappa}-m_\Gamma \|_{H_\Gamma} 
	\notag \\
	& \le 
	\sqrt{C_{\rm P}} \|\nabla _\Gamma y_{\Gamma,\varepsilon}\|_{H_\Gamma}
	\|u_{\Gamma,\kappa}-m_\Gamma \|_{H_\Gamma} 
	\notag \\
	& \le \frac{\delta}{2} \|\nabla _\Gamma y_{\Gamma,\varepsilon}\|_{H_\Gamma}^2 
	+ \frac{C_{\rm P}}{2\delta} \|u_{\Gamma,\kappa}-m_\Gamma \|_{H_\Gamma} ^2,
\end{align*}
for all $\delta>0$, 
that is, there exists a positive constant $C_{\rm P}'$ depends only on $C_{\rm P}$ such that 
\begin{align*}
	\varepsilon \|y_\varepsilon\|_V^2 + \|y_{\Gamma,\varepsilon}\|_{V_\Gamma}^2 
	& \le C_{\rm P}' \|u_{\Gamma,\kappa}-m_\Gamma \|_{H_\Gamma} ^2.
\end{align*}
Now, we take $(z,z_\Gamma):=(\mu_\kappa, \mu_{\Gamma,\kappa})$ in \eqref{aux} and use \eqref{ek1lamb} 
\begin{align*}
	 \int_\Gamma (u_{\Gamma,\kappa} - m_\Gamma ) \mu_{\Gamma,\kappa} \dg
	&= 
	\varepsilon \int_\Omega \nabla y_\varepsilon \cdot \nabla \mu_\kappa \dx 
	+ \int_\Gamma \nabla _\Gamma y_{\Gamma, \varepsilon} \cdot \nabla _\Gamma \mu_{\Gamma,\kappa} \dg
	\\
	&= - \pier{\bigl\langle \partial _t u_{\Gamma,\kappa}(t), y_{\Gamma, \varepsilon}\bigr\rangle_{V_\Gamma',V_\Gamma}}
\end{align*}
and last term is under control by 
\begin{equation*}
	\|\partial _t u_{\Gamma,\kappa} \|_{V_\Gamma'} \| y_{\Gamma,\varepsilon}\|_{V_\Gamma} \le  
	\sqrt{ C_{\rm P}' } \|\partial _t u_{\Gamma,\kappa} \|_{V_\Gamma'} \|u_{\Gamma,\kappa}-m_\Gamma \|_{H_\Gamma}.
\end{equation*}
Merging \eqref{m1} and \eqref{m2}, \pier{and using the above inequality, 
it turns out that there exist
some positive constants $\delta_0$ and $M_3'$, independent of $\lambda, \varepsilon$, and $\kappa$, such that
\begin{align}
	& \int_\Omega |\nabla u_\kappa |^2 \dx 
	+ \delta_0 \int_\Omega \bigl| \beta(u_\kappa) \bigr| \dx + 
	\kappa \int_\Gamma |\nabla _\Gamma u_{\Gamma,\kappa} |^2 \dg
	+ \delta_0 \int_\Gamma \bigl| \beta_\Gamma (u_{\Gamma,\kappa}) \bigr| \dg
	\notag \\
	& \le M_3' 
	+ \bigl\| \partial _t u_\kappa +\pi(u_\kappa) -f \bigr\|_H 
	\|u_\kappa -m_\Gamma\|_H
	+ \bigl\| \pi_\Gamma (u_{\Gamma,\kappa}) -f_\Gamma \bigr\|_{H_\Gamma} 
	\|u_{\Gamma,\kappa} -m_\Gamma\|_{H_\Gamma}
	\notag \\
	& \quad {}+
	\sqrt{ C_{\rm P}' } \|\partial _t u_{\Gamma,\kappa} \|_{V_\Gamma'} 
	\|u_{\Gamma,\kappa}-m_\Gamma \|_{H_\Gamma}
	\label{gms}
\end{align}
a.e.\ in $(0,T)$. 
In the above computation, we exploited a useful inequality, whose proof can be found e.g.\ 
in~\cite[p.~908]{GMS09}, asserting that there are two positive constants $\delta_0$ and $c_1$ such that 
\begin{equation}
	\beta_\lambda (r) (r-m_\Gamma ) \ge \delta_0 \bigl| \beta_\lambda (r) \bigr|-c_1, \quad 
	\beta_{\Gamma,\lambda} (r) (r-m_\Gamma ) \ge \delta_0 \bigl| \beta_{\Gamma,\lambda} (r) \bigr|-c_1
\label{pier3}
\end{equation}
for all $r \in \mathbb{R}$. For the validity of \eqref{pier3} one needs that the value 
$m_\Gamma $ belongs to the interior of both domains 
$D(\beta_\Gamma)$ and $D(\beta)$ (see (A2) and (A4)).}

\smallskip

\pier{About \eqref{gms}, we notice that the right-hand side is uniformly bounded in $L^2(0,T)$ due to Lemmas~\ref{FE} and~\ref{L3}. Then we square both sides of \eqref{gms} and, in view of the estimates already proved, we deduce that
\begin{equation*}
	\bigl\| \beta_\lambda (u_\kappa) \bigr\|_{L^2(0,T;L^1(\Omega))} + 
	\bigl\| \beta_{\Gamma,\lambda} (u_{\Gamma,\kappa}) \bigr\|_{L^2(0,T;L^1(\Gamma))} \le M_3
\end{equation*} 
for some positive constant $M_3$.  
Next, we observe that combining \eqref{ek3lamb} and \eqref{ek6lamb} tested by the constant function $1$ 
and squaring lead to  
\begin{align*}
	\left| \int_\Gamma \mu_{\Gamma,\kappa} \dg \right|^2
	& \le C \|\partial_t u_\kappa \|_{L^1(\Omega)}^2
	+ C\bigl\| \beta_\lambda (u_\kappa)\bigr\|_{L^1(\Omega)}^2
	+ C\bigl\| \pi(u_\kappa)-f \bigr\|_{L^1(\Omega)}^2
	\notag \\
	& \quad {}
	+ C\bigl\| \beta_{\Gamma,\lambda} (u_{\Gamma,\kappa})\bigr\|_{L^1(\Gamma)}^2
	+ C\bigl\| \pi_\Gamma (u_{\Gamma,\kappa})-f_\Gamma \bigr\|_{L^1(\Gamma)}^2.
\end{align*}
Thus, in view of Lemma~\ref{FE} and the Poincar\'e type inequalities~\eqref{Poin0} and~\eqref{Poin2}, we easily deduce that also the second estimate in the statement of the lemma holds.}
\end{proof}
\smallskip

\begin{lemma}
\label{L5}
There exists a positive constant $M_5$, independent of $\lambda, \varepsilon$, and $\kappa$, such that
\begin{gather}
	\bigl\| \beta_\lambda (u_\kappa) \bigr\|_{L^2(0,T;H)} 
	+ 
	\bigl\| \beta_\lambda (u_{\Gamma,\kappa}) \bigr\|_{L^2(0,T;H_\Gamma)} \le M_5, 
	\notag \\
	\|\Delta u_\kappa\|_{L^2(0,T;H)} + 
	\|\partial_{\boldsymbol{\nu}} u_\kappa \|_{L^2(0,T;Z_\Gamma')} \le M_5.
	\notag 
\end{gather}
\end{lemma}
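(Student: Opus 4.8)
The plan is to derive all four bounds by testing the bulk equation~\eqref{ek3lamb} of the approximating problem (where $u_\kappa \in L^2(0,T;W)$ makes the following integrations by parts and the chain rule rigorous) with the bulk nonlinearity $\beta_\lambda(u_\kappa)$, and then using the boundary equation~\eqref{ek6lamb} to rewrite the resulting normal-derivative trace term. First I would multiply \eqref{ek3lamb} by $\beta_\lambda(u_\kappa)$, integrate over $\Omega$ and by parts, and integrate over $(0,T)$. Since $\beta_\lambda$ is monotone and Lipschitz continuous, so that $\nabla\beta_\lambda(u_\kappa)=\beta_\lambda'(u_\kappa)\nabla u_\kappa$ with $\beta_\lambda'\ge 0$, the term $\int_0^T\!\!\int_\Omega \beta_\lambda'(u_\kappa)|\nabla u_\kappa|^2\,\dx\ds$ is nonnegative, while $\int_\Omega \partial_t u_\kappa\,\beta_\lambda(u_\kappa)\,\dx$ integrates in time to $\int_\Omega\widehat\beta_\lambda(u_\kappa(T))\,\dx-\int_\Omega\widehat\beta_\lambda(u_0)\,\dx$, the first being nonnegative and the second bounded by $\int_\Omega\widehat\beta(u_0)\,\dx<\infty$ via~(A4). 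This keeps the desired quantity $\|\beta_\lambda(u_\kappa)\|_{L^2(0,T;H)}^2$ on the left, together with the boundary contribution $-\int_0^T\!\!\int_\Gamma \partial_{\boldsymbol{\nu}}u_\kappa\,\beta_\lambda(u_{\Gamma,\kappa})\,\dg\ds$, where the trace of $\beta_\lambda(u_\kappa)$ equals $\beta_\lambda(u_{\Gamma,\kappa})$ by~\eqref{ek4lamb} and the continuity of $\beta_\lambda$.

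Next I would substitute $\partial_{\boldsymbol{\nu}}u_\kappa$ from~\eqref{ek6lamb}, replacing it by $\mu_{\Gamma,\kappa}+\kappa\Delta_\Gamma u_{\Gamma,\kappa}-\beta_{\Gamma,\lambda}(u_{\Gamma,\kappa})-\pi_\Gamma(u_{\Gamma,\kappa})+f_\Gamma$. The surface-diffusion contribution $-\kappa\int_0^T\!\!\int_\Gamma \Delta_\Gamma u_{\Gamma,\kappa}\,\beta_\lambda(u_{\Gamma,\kappa})\,\dg\ds=\kappa\int_0^T\!\!\int_\Gamma \beta_\lambda'(u_{\Gamma,\kappa})|\nabla_\Gamma u_{\Gamma,\kappa}|^2\,\dg\ds$ is again nonnegative, hence $\kappa$-uniformly harmless. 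The terms carrying $\mu_{\Gamma,\kappa}$, $\pi_\Gamma(u_{\Gamma,\kappa})$ and $f_\Gamma$ are controlled by the Cauchy--Schwarz and Young inequalities, using $\|\mu_{\Gamma,\kappa}\|_{L^2(0,T;V_\Gamma)}\le M_4$ from Lemma~\ref{L4}, the Lipschitz continuity of $\pi_\Gamma$ in~(A3) together with the $L^\infty(0,T;Z_\Gamma)$ bound on $u_{\Gamma,\kappa}$ from Lemma~\ref{FE}, and the datum $f_\Gamma$ from~(A5); each is split so that a small multiple of $\|\beta_\lambda(u_{\Gamma,\kappa})\|_{L^2(0,T;H_\Gamma)}^2$ is absorbed into the left-hand side. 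The bulk forcing terms $\int_0^T\!\!\int_\Omega (f-\pi(u_\kappa))\beta_\lambda(u_\kappa)\,\dx\ds$ are handled analogously and absorbed into $\|\beta_\lambda(u_\kappa)\|_{L^2(0,T;H)}^2$.

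The decisive step, which I expect to be the main obstacle, is the cross term $\int_0^T\!\!\int_\Gamma \beta_{\Gamma,\lambda}(u_{\Gamma,\kappa})\,\beta_\lambda(u_{\Gamma,\kappa})\,\dg\ds$, which remains on the left with the favorable sign. Here I would invoke the compatibility of the Yosida approximations inherited from the domination condition~(A2): since $\beta_\lambda(r)$ and $\beta_{\Gamma,\lambda}(r)$ both share the sign of $r$ and satisfy $|\beta_\lambda(r)|\le \varrho|\beta_{\Gamma,\lambda}(r)|+c_0$ uniformly in $\lambda$, one gets the pointwise bound $\beta_{\Gamma,\lambda}(r)\beta_\lambda(r)\ge \frac{1}{2\varrho}|\beta_\lambda(r)|^2-\frac{c_0^2}{2\varrho}$. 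This turns the cross term into a positive multiple of $\|\beta_\lambda(u_{\Gamma,\kappa})\|_{L^2(0,T;H_\Gamma)}^2$ up to a constant, exactly what is required to close the estimate and, after the absorptions above, to bound both $\|\beta_\lambda(u_\kappa)\|_{L^2(0,T;H)}$ and $\|\beta_\lambda(u_{\Gamma,\kappa})\|_{L^2(0,T;H_\Gamma)}$ by a constant independent of $\lambda,\varepsilon,\kappa$. Establishing this domination at the level of the Yosida regularizations, rather than merely for the graphs, is the technical heart of the proof; note that only~(A2) (not the stronger~(A6)) is needed, consistently with the fact that $\xi_\Gamma$ is sought only in a dual space.

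Finally, the remaining two bounds follow immediately. Rewriting~\eqref{ek3lamb} as $\Delta u_\kappa=\partial_t u_\kappa+\beta_\lambda(u_\kappa)+\pi(u_\kappa)-f$ and invoking Lemma~\ref{FE} for $\partial_t u_\kappa$, the bound just obtained for $\beta_\lambda(u_\kappa)$, the Lipschitz continuity of $\pi$ with the $L^\infty(0,T;V)$ estimate on $u_\kappa$, and~(A5) for $f$, yields $\|\Delta u_\kappa\|_{L^2(0,T;H)}\le M_5$. Then the weak normal-derivative inequality~\eqref{normal1} gives $\|\partial_{\boldsymbol{\nu}}u_\kappa\|_{Z_\Gamma'}\le C(\|\Delta u_\kappa\|_H+\|\nabla u_\kappa\|_H)$, so the $L^\infty(0,T;V)$ bound of Lemma~\ref{FE} together with the $\Delta u_\kappa$ estimate produces $\|\partial_{\boldsymbol{\nu}}u_\kappa\|_{L^2(0,T;Z_\Gamma')}\le M_5$, completing the proof.
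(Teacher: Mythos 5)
Your proposal is correct and follows essentially the same route as the paper: test \eqref{ek3lamb} by $\beta_\lambda(u_\kappa)$, substitute $\partial_{\boldsymbol{\nu}}u_\kappa$ via \eqref{ek6lamb}, exploit the sign of the gradient terms, handle the cross term $\beta_{\Gamma,\lambda}(u_{\Gamma,\kappa})\beta_\lambda(u_{\Gamma,\kappa})$ through the Yosida-level version of (A2) (which the paper takes from \cite[Lemma~A.1]{CF20b}), and then conclude by comparison in \eqref{ek3lamb} and the inequality~\eqref{normal1}. The only cosmetic difference is that you absorb the $\partial_t u_\kappa\,\beta_\lambda(u_\kappa)$ term via the chain rule into nonnegative $\widehat{\beta}_\lambda$ boundary-in-time contributions, whereas the paper simply keeps $\partial_t u_\kappa$ on the right-hand side and bounds it in $L^2(0,T;H)$ by Lemma~\ref{FE}; both are valid.
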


\begin{proof}
We test \eqref{ek3lamb} by $\beta_\lambda(u_\kappa(t)) \in V$ and obtain, with the help of \eqref{ek6lamb},
\begin{align}
	& \int_\Omega \bigl| \beta_\lambda (u_\kappa) \bigr|^2 \dx
	+ \int_\Gamma \beta_{\Gamma,\lambda} 
	(u_{\Gamma, \kappa}) \beta_\lambda (u_{\Gamma, \kappa}) \dg
	\notag \\
	& \quad {}+ \int_\Omega \beta'_\lambda(u_\kappa) |\nabla u_\kappa|^2 \dx
	+ \kappa \int_\Gamma \beta_\lambda'(u_{\Gamma,\kappa})
	| \nabla _\Gamma u_{\Gamma, \kappa} |^2 \dg
	\notag \\
	& \le \bigl\| f - \partial _t u_\kappa -\pi(u_\kappa) \bigr\|_H 
	\bigl\| \beta_\lambda (u_\kappa) \bigr\|_H 
	+ \bigl\| f_\Gamma -\pi_\Gamma (u_{\Gamma,\kappa}) -\mu_{\Gamma,\kappa} \bigr\|_{H_\Gamma} 
	\bigl\| \beta_\lambda (u_{\Gamma,\kappa}) \bigr\|_{H_\Gamma} \label{pier4}
\end{align}
a.e.\ in $(0,T)$, 
where we take care of the fact $(\beta_\lambda (u_\kappa))_{|_\Gamma}=\beta_\lambda (u_{\Gamma,\kappa}) \ne \beta_{\Gamma,\lambda} (u_{\Gamma,\kappa}) $ a.e.\ on $\Gamma$. \pier{Now, let us recall assumption (A2) and, in particular, the condition \eqref{ccond}: in view of \cite[Lemma~A.1]{CF20b}, we have that the same estimate holds for the Yosida approximations:
\begin{equation}
	\bigl| \beta_\lambda (r) \bigr| \le \varrho \bigl| \beta_{\Gamma, \lambda} (r) \bigr|+ c_0
	\quad \hbox{for all $r \in \mathbb{R}$ and $\lambda \in (0,1]$}.
	\label{same}
\end{equation}
Hence, from \eqref{same} and Young's inequality it follows that}
\begin{align*}
	\int _\Gamma \beta_{\Gamma,\lambda}(u_{\Gamma,\kappa}) \beta_\lambda (u_{\Gamma, \kappa}) \dg
	& = \int_\Gamma \bigl|  \beta_{\Gamma,\lambda}(u_{\Gamma,\kappa}) \bigr| 
	\bigl| \beta_\lambda (u_{\Gamma, \kappa}) \bigr| \dg
	\notag \\
	& \ge \frac{1}{\varrho} 
	\int_\Gamma
	\bigl| \beta_\lambda(u_{\Gamma,\kappa}) \bigr|^2 \dg
	- \frac{c_0}{\varrho} 
	\int_\Gamma
	\bigl| \beta_{\lambda}(u_{\Gamma,\kappa}) \bigr| \dg
	\notag \\
	& \ge 
	\frac{1}{2\varrho} 
	\int_\Gamma
	\bigl| \beta_\lambda(u_{\Gamma,\kappa}) \bigr|^2 \dg
	- \frac{c_0^2}{2\varrho}|\Gamma|
\end{align*}
a.e.\ in $(0,T)$\pier{, where $|\Gamma|$ denotes the surface measure of $\Gamma $.
We can use this inequality in the left-hand side of~\eqref{pier4}, observe that the third and fourth terms
in~\eqref{pier4} are nonnegative by monotonicity, and estimate the terms on the right-hand side of~\eqref{pier4} by the Young inequality. Then, on account of Lemmas~\ref{FE} and~\ref{L4}, it is straightforward to conclude that
\begin{equation*}
	\bigl\| \beta_\lambda (u_\kappa) \bigr\|_{L^2(0,T;H)} 
	+ 
	\bigl\| \beta_\lambda (u_{\Gamma,\kappa}) \bigr\|_{L^2(0,T;H_\Gamma)} \le C
\end{equation*}
for some positive constant independent of $\lambda, \varepsilon$, and $\kappa$. 
Now, from a comparison of terms in \eqref{ek3lamb} it turns out that
\begin{equation}
	\| \Delta u_\kappa \|_{L^2(0,T;H)} \le C.
	\label{estlap}
\end{equation}
Combining this with the estimate of $\|u_\kappa\|_{L^\infty(0,T;V)}$ obtained in Lemma~\ref{FE}, 
and thanks to~\eqref{normal1}, we deduce that
\begin{equation}
	\|\partial _{\boldsymbol{\nu}} u_\kappa\|_{L^2(0,T;Z_\Gamma')} \le C. 
	\label{pier7} 
\end{equation}
Therefore, the lemma is completely proved.}
\end{proof}
\smallskip

\begin{lemma}
\label{L6}
There exists a positive constant $M_6$, independent of $\lambda, \varepsilon$, and $\kappa$, such that
\begin{gather*}
	\pier{\bigl\| 
	-\kappa \Delta _\Gamma u_{\Gamma,\kappa} + \beta_{\Gamma,\lambda}(u_{\Gamma,\kappa})
	\bigr\| _{L^2(0,T;Z_\Gamma')} \le M_6,}
	\\
	\pier{\sqrt{\kappa} \|u_\kappa\| _{L^2(0,T;H^{3/2}(\Omega))} +
	\sqrt{\kappa} \|\partial_{\boldsymbol{\nu}} u_\kappa\| _{L^2(0,T;H_\Gamma)} \le M_6,}
	\\
	\pier{\sqrt{\kappa} \|\beta_{\Gamma, \lambda} (u_{\Gamma,\kappa}) \|_{L^2(0,T;H_\Gamma)} 
	+ {\kappa}^{3/2} \|\Delta_\Gamma u_{\Gamma,\kappa} \|_{L^2(0,T;H_\Gamma)} \le M_6,}
	\\
	\sqrt{\kappa} \| 
	\Delta _\Gamma u_{\Gamma,\kappa}
	\|_{L^\infty(0,T;V_\Gamma')} + 
	\bigl\| \beta_{\Gamma,\lambda}(u_{\Gamma,\kappa})
	\bigr\| _{L^2(0,T;V_\Gamma')} \le M_6.
\end{gather*}%
\end{lemma}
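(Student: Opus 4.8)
The plan is to derive the four groups of bounds in the listed order, exploiting the boundary relation \eqref{ek6lamb} together with the estimates already secured in Lemmas~\ref{FE}--\ref{L5}. The first bound is the most direct: rewriting \eqref{ek6lamb} as
\[
-\kappa \Delta_\Gamma u_{\Gamma,\kappa} + \beta_{\Gamma,\lambda}(u_{\Gamma,\kappa}) = \mu_{\Gamma,\kappa} - \partial_{\boldsymbol{\nu}} u_\kappa - \pi_\Gamma(u_{\Gamma,\kappa}) + f_\Gamma \quad \text{a.e.\ on } \Sigma,
\]
I would estimate the right-hand side in $L^2(0,T;Z_\Gamma')$. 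Here $\mu_{\Gamma,\kappa}$ is controlled in $L^2(0,T;V_\Gamma)\hookrightarrow L^2(0,T;H_\Gamma)\hookrightarrow L^2(0,T;Z_\Gamma')$ by Lemma~\ref{L4}, the term $\partial_{\boldsymbol{\nu}} u_\kappa$ is bounded in $L^2(0,T;Z_\Gamma')$ by Lemma~\ref{L5}, while $\pi_\Gamma(u_{\Gamma,\kappa})$ and $f_\Gamma$ are bounded in $L^2(0,T;H_\Gamma)$ by (A3), the $L^\infty(0,T;Z_\Gamma)$ bound of Lemma~\ref{FE}, and (A5). This yields the first estimate.

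For the second group I would invoke the elliptic regularity inequality \eqref{LM1} with $\gamma_0 u_\kappa = u_{\Gamma,\kappa}$, multiply by $\sqrt{\kappa}$, and use $\sqrt{\kappa}\|u_{\Gamma,\kappa}\|_{L^\infty(0,T;V_\Gamma)}\le M_1$ from Lemma~\ref{FE} together with the bound $\|\Delta u_\kappa\|_{L^2(0,T;H)}\le C$ from \eqref{estlap} (recall $\kappa\le 1$); this controls $\sqrt{\kappa}\|u_\kappa\|_{L^2(0,T;H^{3/2}(\Omega))}$. Feeding this into the trace inequality \eqref{normal2} and again weighting by $\sqrt{\kappa}$ then gives $\sqrt{\kappa}\|\partial_{\boldsymbol{\nu}} u_\kappa\|_{L^2(0,T;H_\Gamma)}\le M_6$.

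The heart of the argument, and the step I expect to be the main obstacle, is the third group. I would test \eqref{ek6lamb} by $\beta_{\Gamma,\lambda}(u_{\Gamma,\kappa})$, which is admissible thanks to the regularity $u_{\Gamma,\kappa}\in L^2(0,T;W_\Gamma)$. The surface-diffusion contribution $-\kappa\int_\Gamma \Delta_\Gamma u_{\Gamma,\kappa}\,\beta_{\Gamma,\lambda}(u_{\Gamma,\kappa})\dg = \kappa\int_\Gamma \beta_{\Gamma,\lambda}'(u_{\Gamma,\kappa})|\nabla_\Gamma u_{\Gamma,\kappa}|^2\dg$ is nonnegative by monotonicity and can be discarded, leaving $\|\beta_{\Gamma,\lambda}(u_{\Gamma,\kappa})\|_{H_\Gamma}^2$ on the left. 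The difficulty is the coupling term $\int_\Gamma \partial_{\boldsymbol{\nu}} u_\kappa\,\beta_{\Gamma,\lambda}(u_{\Gamma,\kappa})\dg$: since $\partial_{\boldsymbol{\nu}} u_\kappa$ is only uniformly bounded in $Z_\Gamma'$, one cannot pair it with $\beta_{\Gamma,\lambda}(u_{\Gamma,\kappa})$ in $H_\Gamma$ without a loss. The remedy is to keep the $H_\Gamma$-pairing but absorb it at the cost of a factor $\kappa$: after a Young inequality one obtains $\|\beta_{\Gamma,\lambda}(u_{\Gamma,\kappa})\|_{H_\Gamma}^2 \le C\|\partial_{\boldsymbol{\nu}} u_\kappa\|_{H_\Gamma}^2 + (\text{terms bounded in } L^2(0,T))$; multiplying by $\kappa$, integrating in time, and using precisely the weighted bound $\sqrt{\kappa}\|\partial_{\boldsymbol{\nu}} u_\kappa\|_{L^2(0,T;H_\Gamma)}\le M_6$ from the second group produces $\sqrt{\kappa}\|\beta_{\Gamma,\lambda}(u_{\Gamma,\kappa})\|_{L^2(0,T;H_\Gamma)}\le C$. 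A comparison of the terms in \eqref{ek6lamb} then recovers $\kappa^{3/2}\|\Delta_\Gamma u_{\Gamma,\kappa}\|_{L^2(0,T;H_\Gamma)}\le M_6$.

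Finally, for the fourth group I would use that $-\Delta_\Gamma$ maps $V_\Gamma$ into $V_\Gamma'$ with $\|\Delta_\Gamma u_{\Gamma,\kappa}\|_{V_\Gamma'}\le \|u_{\Gamma,\kappa}\|_{V_\Gamma}$, so the $L^\infty(0,T;V_\Gamma)$ bound of Lemma~\ref{FE} weighted by $\sqrt{\kappa}$ immediately gives the estimate for $\sqrt{\kappa}\,\Delta_\Gamma u_{\Gamma,\kappa}$. For $\beta_{\Gamma,\lambda}(u_{\Gamma,\kappa})$ in $L^2(0,T;V_\Gamma')$ I would write it as $\bigl(-\kappa\Delta_\Gamma u_{\Gamma,\kappa}+\beta_{\Gamma,\lambda}(u_{\Gamma,\kappa})\bigr)+\kappa\Delta_\Gamma u_{\Gamma,\kappa}$: the first summand is bounded in $L^2(0,T;Z_\Gamma')\hookrightarrow L^2(0,T;V_\Gamma')$ by the first group, while $\kappa\Delta_\Gamma u_{\Gamma,\kappa}=\sqrt{\kappa}\,\bigl(\sqrt{\kappa}\,\Delta_\Gamma u_{\Gamma,\kappa}\bigr)$ is bounded in $L^\infty(0,T;V_\Gamma')\hookrightarrow L^2(0,T;V_\Gamma')$ by what we just proved. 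Collecting all constants into a single $M_6$ completes the proof.
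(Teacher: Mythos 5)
Your proposal is correct and follows essentially the same route as the paper: comparison of terms in \eqref{ek6lamb} for the $Z_\Gamma'$ bound, the elliptic estimates \eqref{LM1} and \eqref{normal2} weighted by $\sqrt{\kappa}$ for the second group, testing \eqref{ek6lamb} by ($\kappa$ times) $\beta_{\Gamma,\lambda}(u_{\Gamma,\kappa})$ with the monotonicity and Young arguments for the third, and the boundedness of $-\Delta_\Gamma:V_\Gamma\to V_\Gamma'$ plus the decomposition $\beta_{\Gamma,\lambda}(u_{\Gamma,\kappa})=(-\kappa\Delta_\Gamma u_{\Gamma,\kappa}+\beta_{\Gamma,\lambda}(u_{\Gamma,\kappa}))+\kappa\Delta_\Gamma u_{\Gamma,\kappa}$ for the fourth. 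In particular, your identification of the weighted normal-derivative bound $\sqrt{\kappa}\|\partial_{\boldsymbol{\nu}}u_\kappa\|_{L^2(0,T;H_\Gamma)}\le C$ as the key ingredient absorbing the coupling term is exactly the paper's mechanism.
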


\begin{proof}
\pier{In view of Lemmas~\ref{L4} and~\ref{L5}, a comparison of terms in~\eqref{ek6lamb} yields
\begin{equation}
\bigl\| -\kappa \Delta _\Gamma u_{\Gamma,\kappa} + \beta_{\Gamma,\lambda}(u_{\Gamma,\kappa})
	\bigr\| _{L^2(0,T;Z_\Gamma')} \le C.
\label{lastest}
\end{equation}
Next, owing to \eqref{estlap} and to the estimate of 
$\sqrt{\kappa}\|u_{\Gamma,\kappa}\|_{L^\infty(0,T;V_\Gamma)}$ (see Lemma~\ref{FE}), we can invoke the embedding inequalities~\eqref{LM1} and~\eqref{normal2} to deduce that 
\begin{equation*}
	\sqrt{\kappa} \|u_\kappa\|_{L^2(0,T;H^{3/2}(\Omega))} + 
	\sqrt{\kappa} \|\partial _{\boldsymbol{\nu}} u_\kappa\|_{L^2(0,T;H_\Gamma)} \le C. 
\end{equation*}
Moreover, we can test \eqref{ek6lamb} by $\kappa \beta_{\Gamma, \lambda} (u_{\Gamma,\kappa})$ 
and integrate by parts to find that 
\begin{align}
&\kappa^2 \int _\Gamma  \beta_{\Gamma, \lambda}' (u_{\Gamma,\kappa})
  |\nabla_\Gamma u_{\Gamma,\kappa}|^2 \dg 
    + \kappa \takeshi{\bigl \|} \beta_{\Gamma, \lambda} (u_{\Gamma,\kappa}) \takeshi{ \bigr \|}_{H_\Gamma}^2
\notag \\
&   =
\kappa \int _\Gamma \takeshi{ \bigl(} 
\mu_{\Gamma,\kappa} - \partial_{\boldsymbol{\nu}} u_\kappa - \pi_\Gamma(u_{\Gamma,\kappa}) - f_\Gamma
\takeshi{ \bigr)} \beta_{\Gamma, \lambda} (u_{\Gamma,\kappa}) \dg 
\notag \\
& \le \frac \kappa 2 \takeshi{\bigl\|} \beta_{\Gamma, \lambda} (u_{\Gamma,\kappa}) 
\takeshi{\bigr \|}_{H_\Gamma} +
C \kappa \| \partial _{\boldsymbol{\nu}} u_\kappa \|_{H_\Gamma}^2 
+ C \takeshi{\bigl \|} \mu_{\Gamma,\kappa}  - \pi_\Gamma(u_{\Gamma,\kappa}) - f_\Gamma \takeshi{\bigr \|}^2_{H_\Gamma}  
\label{pier6}
\end{align}
a.e.\ on~$(0,T)$. Then, integrating the resultant of \eqref{pier6} over (0,T), and 
accounting for Lemmas~\ref{FE} and~\ref{L4}, we easily infer that
\begin{equation*}
	\sqrt{\kappa} \takeshi{\bigl \|} 
	\beta_{\Gamma, \lambda} (u_{\Gamma,\kappa}) 
	\takeshi{\bigr \|}_{L^2(0,T;H_\Gamma)} \le C,
\end{equation*}
which also implies, by comparison of terms in \eqref{ek6lamb}, that
\begin{equation*}
	{\kappa}^{3/2} \|\Delta_\Gamma u_{\Gamma,\kappa} \|_{L^2(0,T;H_\Gamma)} \le C. 
\end{equation*}
At this point, note that (the natural extension of) the Laplace--Beltrami operator $- \Delta_\Gamma$ is linear and bounded from $V_\Gamma$ to $V_\Gamma'$. Hence, recalling Lemma~\ref{FE} as well, there exists a constant $C_{\rm D}>0$ such that 
\begin{equation*}
	\sqrt{\kappa} \|\Delta _\Gamma u_{\Gamma,\kappa} \|_{L^\infty(0,T;V_\Gamma')} \le 
	\sqrt{\kappa} C_{\rm D}
	 \| u_{\Gamma,\kappa}\|_{L^\infty(0,T;V_\Gamma)} 
	 \le C_{\rm D} M_1.
\end{equation*}
Therefore, in view of~\eqref{lastest} we deduce that 
\begin{align*}
	&\bigl\| \beta_{\Gamma,\lambda}(u_{\Gamma,\kappa})
	\bigr\| _{L^2(0,T;V_\Gamma')}
	\le \bigl\| \beta_{\Gamma,\lambda}(u_{\Gamma,\kappa})
	-\kappa \Delta_\Gamma u_{\Gamma,\kappa}
	\bigr\| _{L^2(0,T;V_\Gamma')}
	+\kappa\,\|  \Delta_\Gamma u_{\Gamma,\kappa}
	\| _{L^2(0,T;V_\Gamma')}
	\notag \\
	& \le C\bigl\| \beta_{\Gamma,\lambda}(u_{\Gamma,\kappa})
	-\kappa \Delta_\Gamma u_{\Gamma,\kappa}
	\bigr\| _{L^2(0,T;Z_\Gamma')}
	+ C \sqrt{\kappa} \| 
	\Delta _\Gamma u_{\Gamma,\kappa}\|_{L^\infty(0,T;V_\Gamma')}
	\le C.
\end{align*}
Thus, we arrive at the conclusion.}
\end{proof}

\subsection{Proof of the Theorem 3.1.}
\pier{Let us recall the previously established well-posedness result~\cite[Theorems 2.3, 2.4]{CF20}, which 
pertains to the limiting case as $\lambda \to 0$ while keeping \takeshi{$\varepsilon, \kappa>0$} fixed. 
The well-posedness of $({\rm P})_{\varepsilon\kappa}$ is already known. Accordingly, we interpret the 
family $\{(u_\kappa,\mu_\kappa, \xi_\kappa, u_{\Gamma,\kappa},$ $\mu_{\Gamma,\kappa}, \xi_{\Gamma,\kappa})
\}_{\kappa \in (0,1]}$ as solutions to $({\rm P})_{\varepsilon\kappa}$. In the light of Lemmas~\ref{FE}--\ref{L6} and accounting for the weak or weak star lower semicontinuity of norms, this family of solutions 
satisfies the estimate
\begin{align}
	&\|u_\kappa \|_{H^1(0,T;H)\cap L^\infty (0,T;V)} 
	+	\|\Delta u_\kappa\|_{L^2(0,T;H)} 
	+ 	\|\partial_{\boldsymbol{\nu}} u_\kappa \|_{L^2(0,T;Z_\Gamma')}
	\notag \\
	&{} 	
		+ \sqrt{\varepsilon} \|\mu_{\kappa}\|_{L^2(0,T;V)}
		+ \takeshi{ \|} \xi_\kappa \takeshi{\|}_{L^2(0,T;H)} 
	+ \| u_{\Gamma,\kappa}\|_{H^1(0,T;V_\Gamma')\cap L^\infty(0,T;Z_\Gamma)}
	\notag \\
	&{} 
		+ \sqrt{\kappa} \| u_{\Gamma,\kappa}\|_{L^\infty(0,T;V_\Gamma)}
		+ \| \mu_{\Gamma, \kappa}\|_{L^2(0,T;V_\Gamma)}
	+ \takeshi{\|} \xi_{\Gamma,\kappa} \takeshi{\|}_{L^2(0,T;V_\Gamma')}
	\notag \\
	&{}
	+ \sqrt{\kappa} \takeshi{\|} \xi_{\Gamma,\kappa} \takeshi{\|}_{L^2(0,T;H_\Gamma)}	
	+\takeshi{\|} 
	-\kappa \Delta _\Gamma u_{\Gamma,\kappa} + \xi_{\Gamma,\kappa}
	\takeshi{\|} _{L^2(0,T;Z_\Gamma')}
	\le C .
	\label{pier5}
\end{align}
Hereafter, we  consider the limiting procedure $\kappa \to 0$ keeping $\varepsilon>0$ fixed. 
Then we claim that there exists a \pier{sextuple} $(u, \mu, \xi, u_\Gamma, \mu_\Gamma, \xi_\Gamma)$ and 
 a subsequence $\{\kappa_k \}_{k \in \mathbb{N}}$ such that, as $k \to +\infty$, the convergences  $\kappa_k \to 0$ and 
\begin{align*}
	&u_{\kappa_k} \to u \quad \text{weakly~star~in~} H^1(0,T;H) \cap L^\infty (0,T;V),
	\\
	&\Delta u_{\kappa_k} \to \Delta u \quad \text{weakly~in~} L^2 (0,T;H),
	\\
	& \partial _{\boldsymbol{\nu}} u_{\kappa _k} \to \partial _{\boldsymbol{\nu}} u 
	\quad \text{weakly~in~} L^2 (0,T;Z_\Gamma'), 
	\\
	&\mu_{\kappa_k} \to \mu  \quad \text{weakly~in~} L^2 (0,T;V),
	\\
	&\xi_{\kappa_k} \to \xi \quad  \text{weakly~in~} L^2 (0,T;H),
	\\
	&u_{\Gamma,\kappa_k} \to u_\Gamma \quad \text{weakly~star~in~} H^1(0,T;V_\Gamma') \cap L^\infty (0,T;Z_\Gamma),
	\\
	&{\kappa_k} u_{\Gamma,\kappa_k} \to 0 \quad \text{strongly~in~} L^\infty (0,T;V_\Gamma),
	\\
	&\mu_{\Gamma,\kappa_k} \to \mu_\Gamma \quad \text{weakly~in~} L^2 (0,T;V_\Gamma),
	\\
	&\xi_{\Gamma,\kappa_k}\to \xi_\Gamma \quad \text{weakly~in~} L^2 (0,T;V_\Gamma'),
	\\
	&(-{\kappa_k} \Delta_\Gamma u_{\Gamma,\kappa_k} +\xi_{\Gamma,\kappa_k}) \to \xi_\Gamma \quad \text{weakly~in~} 
	L^2 (0,T;Z_\Gamma')
\end{align*}
hold.} 
\pier{Moreover, applying the compactness theorem in~\cite[Sect.~8, Cor.~4]{Sim87} and recalling the 
compact embeddings $V \hookrightarrow \hookrightarrow H$, $Z_\Gamma \hookrightarrow \hookrightarrow H_\Gamma$ and assumption (A3), we have that 
\begin{align*}
	&u_{\kappa_k} \to u, \quad \pi(u_{\kappa_k}) \to \pi(u)  \quad \text{strongly~in~} C\bigl([0,T];H \bigr),
	\\
	&u_{\Gamma,\kappa_k} \to u_\Gamma, \quad \pi_\Gamma(u_{\Gamma,\kappa_k}) \to \pi_\Gamma(u) \quad \text{strongly~in~} C\bigl([0,T];H_\Gamma \bigr)
\end{align*}
as $k \to \infty$. Note that now we have all the convergences stated in~\eqref{cv1k}--\eqref{cv11k}. 
It remains to prove that  $(u, \mu, \xi, u_\Gamma, \mu_\Gamma, \xi_\Gamma)$ solves $({\rm P})_{\varepsilon}$.
By the strong convergences above it is straightforward to pass to the limit in the initial conditions and obtain~\eqref{e5} and \eqref{e8}. In addition, the boundary conditions in \eqref{e2}, \eqref{e4} and the equation in \eqref{e3} follow from the weak and weak star convergences previously recalled. 
Thanks to the standard maximal monotone property of demi-closedness~\cite{Bar10}, from \eqref{cv2k} and 
\eqref{cv5k} we easily infer that 
\begin{equation*}
	\xi \in \beta (u) \quad \text{\rm a.e.\ in } Q
\end{equation*}
and this allows us to fully show~\eqref{e3}. 
Now, we can take the limit $k \to \infty$ in \eqref{ek1lamb} to deduce that 
\begin{equation}
	\langle \partial _t u_{\Gamma},z_\Gamma \rangle_{V_\Gamma',V_\Gamma}
	+ \varepsilon \int_\Omega \nabla \mu \cdot \nabla z \dx 
	+ \int_\Gamma \nabla _\Gamma \mu_{\Gamma} \cdot \nabla_\Gamma z_\Gamma \dg
	=0 
	\label{weak1}
\end{equation}
for all $(z,z_\Gamma) \in \boldsymbol{V}$, a.e.\ in $(0,T)$.
By taking $(z,0) \in \boldsymbol{V}$ with $z \in {\mathcal D}(\Omega)$ in \eqref{weak1}, 
we obtain $-\varepsilon\Delta \mu=0$ in ${\mathcal D}'(\Omega)$,  a.e.\ in $(0,T)$, with
the right-hand side $0$ that is clearly in $H$. Hence, $ \Delta \mu \in L^2(0,T;H)$ and 
\eqref{e1} follow.} 

\smallskip

Next, using the \pier{characterization of the normal derivative in~\eqref{normal} and \eqref{weak1}, we obtain a.e.\ in $(0,T)$ that
\begin{align*}
	 \varepsilon \langle \partial _{\boldsymbol{\nu}} \mu, z_\Gamma \rangle_{Z_\Gamma',Z_\Gamma} 
	& = \int_\Omega \varepsilon \Delta \mu \, {\mathcal R}z_\Gamma \dx 
	+ \varepsilon \int_\Omega \nabla \mu \cdot \nabla {\mathcal R}z_\Gamma \dx \\ 
	& = -\langle \partial _t u_\Gamma, z_\Gamma \rangle_{V_\Gamma',V_\Gamma} 
	- \int_\Gamma \nabla _\Gamma \mu_\Gamma \cdot \nabla_\Gamma z_\Gamma \dg
\end{align*}
for all $z _\Gamma \in V_\Gamma \subset Z_\Gamma$ because $({\mathcal R}z_\Gamma, z_\Gamma ) \in \boldsymbol{V}$. It is evident that the final equality directly implies~\eqref{e6weak}.}

\smallskip

\pier{At this point, we take an arbitrary pair $(z,z_\Gamma) \in \boldsymbol{Z}$ and test
\eqref{ek3} by $z$, then integrate by parts using the boundary equation~\eqref{ek6}. 
Then, letting $k \to +\infty$ and exploiting the convergence in~\eqref{cv11k}, we arrive at}
\begin{align}
	&\int_\Omega \partial_t u z \dx 
	+ 
	\int_\Omega \nabla u \cdot \nabla z\dx 
	+ 
	\int_\Omega \bigl( \xi + \pi(u) \bigr) z \dx 
	+
	\langle \xi_\Gamma,z_\Gamma \rangle_{Z_\Gamma',Z_\Gamma}
	+
	\int_\Gamma 
	\pi_\Gamma(u_\Gamma) z_\Gamma \dg
	\notag \\
	&{}
	= 
	\int_\Omega f  z \dx 
	+
	\int_\Gamma ( 
	f_\Gamma + \mu_\Gamma )z_\Gamma \dg
	\quad 
	\text{for all } (z,z_\Gamma) \in \boldsymbol{Z}, 
	\text{ a.e.\ in } (0,T).
	\label{main1-3}
\end{align}
\pier{Therefore, in view of the equation in \eqref{e3} and using~\eqref{normal} again, by a 
cancellation of the corresponding terms we infer that}
\begin{align*}
	\langle \partial _{\boldsymbol{\nu}} u, z_\Gamma \rangle_{Z_\Gamma',Z_\Gamma} 
	& = \int_\Omega \Delta u {\mathcal R}z_\Gamma \dx 
	+ \int_\Omega \nabla u \cdot \nabla {\mathcal R}z_\Gamma \dx \\ 
	& = -\langle \xi_\Gamma, z_\Gamma \rangle_{Z_\Gamma',Z_\Gamma} 
	- \int_\Gamma \bigl( \pi_\Gamma(u_\Gamma) - f_\Gamma - \mu_\Gamma \bigr) z_\Gamma \dg
\end{align*}
for all $z _\Gamma \in Z_\Gamma$, a.e.\ in $(0,T)$\pier{, which is nothing but the equality in~\eqref{e7weak}. In order to complete the proof of~\eqref{e7weak}, we multiply \eqref{ek3} by $u_{\kappa_k}$, integrating the resultant over $Q = \Omega \times (0,T)$ 
with respect to space and time variables. With the help of~\eqref{ek6} we have~that}
\begin{align*}
	&\iint _Q |\nabla u_{\kappa_k} |^2 \dx \dt
	+ {\kappa_k} \iint_\Sigma |\nabla_\Gamma u_{\kappa_k} |^2 \dg \dt
	+
	\iint _Q \xi_{\kappa_k} u_{\kappa_k} \dx \dt 
	+ 
	\iint_\Sigma \xi_{\Gamma, \kappa_k} u_{\Gamma,\kappa_k} \dg \dt
	\nonumber \\
	{}
	&=\iint _Q \bigl( f -\partial_t u_{\kappa_k} -\pi(u_{\kappa_k} )\bigr) u_{\kappa_k} \dx \dt
	+ \iint_\Sigma \bigl(f_\Gamma + \mu_{\Gamma, \kappa_k}  -\pi_\Gamma(u_{\Gamma, \kappa_k} )\bigr) u_{\Gamma,\kappa_k} \dg \dt,\notag
\end{align*}
\pier{where $\Sigma=\Gamma\times (0,T).$
Then, using the lower semicontinuity and the weak and strong convergence results obtained above, we deduce that}
\begin{align}
	& \limsup_{k \to +\infty} 
	\iint_\Sigma \xi_{\Gamma, \kappa_k} u_{\Gamma, \kappa_k} \dg \dt \notag \\
	& \le \limsup_{k \to +\infty} 
	 \iint _Q \bigl( f-\partial_t u_{\kappa_k}-\pi(u_{\kappa_k} ) \bigr) u_{\kappa_k} \dx \dt 
	\notag \\	 
	& {} + 
	\limsup_{k \to +\infty} 
	\iint_\Sigma \bigl(  f_\Gamma+\mu_{\Gamma,\kappa_k}  -\pi_\Gamma(u_{\Gamma,\kappa_k}) \bigr) u_{\Gamma,\kappa_k} \dg \dt 
 - \liminf_{k \to +\infty} \iint _Q |\nabla u_{\kappa_k} |^2 \dx \dt  
 	\notag \\
	& \quad {}
	- \liminf_{k \to +\infty} {\kappa_k} \iint_\Sigma |\nabla_\Gamma u_{\Gamma,\kappa_k} |^2 \dg \dt 
	- \liminf_{k \to +\infty} \iint _Q \xi_{\kappa_k} u_{\kappa_k} \dx \dt 
	\notag \\
	& \le \iint _Q \bigl(f -\partial_t u -\pi(u) \bigr) u \dx \dt + 
	\iint_\Sigma \bigl(f_\Gamma+ \mu_\Gamma  -\pi_\Gamma(u_\Gamma) \bigr) u_\Gamma \dg \dt 
	\notag \\
	& \quad {} - \iint _Q |\nabla u |^2 \dx \dt  
	- \iint _Q \xi u \dx \dt
	= \int_0^T \langle \xi_\Gamma , u_{\Gamma} \rangle _{Z_\Gamma',Z_\Gamma} \dt,
	\label{last3}
\end{align}
where the last equality \pier{is a consequence of~\eqref{main1-3} when taking $(z,z_\Gamma)= (u,u_\Gamma)$. Now, on account of  the definition of subdifferential for $\beta_{\Gamma} $ in $L^2(0,T;H_\Gamma)\equiv L^2(\Sigma)$, we claim that 
\begin{equation}
	\iint_\Sigma \xi_{\kappa_k} (\zeta_\Gamma - u_{\Gamma,\kappa_k}) \dg \dt
+ \iint_\Sigma \widehat{\beta}_{\Gamma} (u_{\Gamma, \kappa_k}) \dg \dt 
	\le 
	\iint_\Sigma \widehat{\beta}_{\Gamma} (\zeta_\Gamma) \dg \dt 
	\label{liminf}
\end{equation} 
for all $\zeta_\Gamma \in L^2(0,T;H_\Gamma)$.
For  a while, let us take $\zeta_\Gamma \in L^2(0,T;V_\Gamma)$. 
In this case, from the weak convergence~\eqref{cv10k} we infer that} 
\begin{equation*}
	\lim _{k \to +\infty} \iint_\Sigma \xi_{\kappa_k} \zeta_\Gamma \dg \dt
	=\int_0^T \langle \xi_\Gamma, \zeta_\Gamma \rangle_{V_\Gamma',V_\Gamma} \dt
	= \int_0^T \langle \xi_\Gamma, \zeta_\Gamma \rangle_{Z_\Gamma',Z_\Gamma} \dt
\end{equation*}
\pier{since $V_\Gamma \subset Z_\Gamma$ and $\xi_\Gamma \in L^2(0,T;Z_\Gamma')$.
Moreover, from \eqref{last3} it follows that}
\begin{align*}
	\liminf_{k \to +\infty} \left( - \iint_\Sigma \xi_{\kappa_k} u_{\Gamma, \kappa_k} \dg 
\dt \right)
	&= - \limsup_{k \to +\infty} \iint_\Sigma \xi_{\kappa_k} u_{\Gamma, \kappa_k} \dg \dt \\
	&\ge 
	- \int_0^T \langle \xi_\Gamma, u_\Gamma \rangle_{Z_\Gamma',Z_\Gamma} \dt.
\end{align*}
\pier{Finally, using the lower semicontinuity of the extension of the convex function $ \widehat{\beta}_{\Gamma}$ to $L^2(\Sigma)$, we have that}
\begin{equation*}
	\iint_\Sigma \widehat{\beta}_{\Gamma}
	(u_{\Gamma})\dg \dt \le \liminf_{k \to +\infty} 
	\iint_\Sigma \widehat{\beta}_{\Gamma}
	(u_{\Gamma, \kappa_k}) \dg \dt. 
\end{equation*}
Therefore, taking the infimum limit in \eqref{liminf}, we deduce that
\begin{equation}
	\int_0^T \langle \xi_\Gamma, \zeta_\Gamma-u_\Gamma \rangle_{Z_\Gamma',Z_\Gamma } \dt 
	\pier{{}+	\iint_\Sigma \widehat{\beta}_\Gamma (u_\Gamma) \dg \dt }
	\le \iint_\Sigma \widehat{\beta}_\Gamma (\zeta_\Gamma) \dg \dt
	\label{sub}
\end{equation}
for all $\zeta_\Gamma \in L^2(0,T;V_\Gamma)$. Next, as $\xi_\Gamma \in L^2 (0,T;Z_\Gamma')$, 
by a density argument we can prove that \eqref{sub} holds also for all 
$\zeta_\Gamma \in L^2(0,T;Z_\Gamma)$. Indeed,
for a given arbitrary $\zeta_\Gamma \in L^2(0,T;Z_\Gamma)$, 
we can take the approximations $\{ \zeta_{\Gamma,n} \}_{n \in \mathbb{N}}$ in $L^2(0,T;V_\Gamma)$ defined as the solutions to  
\begin{equation*}
	\zeta_{\Gamma, n} - \frac{1}{n} \Delta_\Gamma \zeta_{\Gamma,n} =\zeta_\Gamma 
	\quad {\rm a.e.\ on~} \Sigma.  
\end{equation*}
In fact, thanks to \cite[Lemma~A.1]{CF16} we have that
\begin{gather*}
	\zeta_{\Gamma,n} \to \zeta_\Gamma \quad {\rm in~} L^2(0,T;Z_\Gamma) \quad {\rm as~} n \to +\infty, 
	\\
	\widehat{\beta}_\Gamma (\zeta_{\Gamma,n}) \le \widehat{\beta}_\Gamma (\zeta_\Gamma) \quad 
	{\rm a.e.\ on~} \Sigma, \hbox{ for~all~} n \in \mathbb{N}.
\end{gather*}
Thus, replacing $\zeta_\Gamma$ by $\zeta_{\Gamma,n}$ in \eqref{sub} and letting $n \to +\infty$, 
we obtain the validity of \eqref{sub} for all $\zeta_\Gamma \in L^2(0,T;Z_\Gamma)$, \pier{which is equivalent to the formulation in~\eqref{e7weak}.}
\hfill $\Box$ 

\begin{corollary}\label{cor1}
\pier{In the same framework of Theorem~\ref{thm1} and under the further assumption~{\rm (A6)}, 
the found \pier{sextuple} $(u,\mu, \xi, u_\Gamma, \mu_\Gamma, \xi_\Gamma)$ additionally fulfils 
\begin{equation*}
	u \in L^2\bigl( 0,T;H^{3/2}(\Omega) \bigr), \quad 
	\partial_{\boldsymbol{\nu}} u \in L^2(0,T;H_\Gamma), \quad 
	u_\Gamma \in L^2(0,T;V_\Gamma), \quad
	\xi_\Gamma \in L^2(0,T;H_\Gamma).
\end{equation*}
Moreover, the further convergence properties 
\begin{align}
	&\xi_{\Gamma,\kappa_k} \to \xi_\Gamma \quad \text{weakly~in~} L^2 (0,T;H_\Gamma), \label{pier8}
	\\
	&\partial _{\boldsymbol{\nu}} u_{\kappa _k} - {\kappa_k} \Delta_\Gamma u_{\Gamma,\kappa_k}  \to \partial _{\boldsymbol{\nu}} u \quad \text{weakly~in~} 
	L^2 (0,T;H_\Gamma)  \label{pier9}
\end{align} 
hold and the conditions in~\eqref{e7weak} can be equivalently formulated as~\eqref{e7}.}
\end{corollary}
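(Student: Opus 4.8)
The plan is to upgrade the regularity of the limiting sextuple by first promoting the boundary selection $\xi_\Gamma$ from $L^2(0,T;Z_\Gamma')$ to $L^2(0,T;H_\Gamma)$, exploiting the two-sided growth condition {\rm (A6)}, and then reading off the remaining regularities by comparison in the boundary equation and by elliptic regularity. First I would observe that, exactly as the Yosida version \eqref{same} of {\rm (A2)} was deduced from \cite[Lemma~A.1]{CF20b}, the symmetric counterpart of that argument (applicable because {\rm (A6)} forces $D(\beta)=D(\beta_\Gamma)$) transfers the lower inequality in {\rm (A6)} to the approximations, providing a constant $C$ with $|\beta_{\Gamma,\lambda}(r)| \le C(|\beta_\lambda(r)|+1)$ for all $r \in \mathbb{R}$ and $\lambda \in (0,1]$. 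The crucial point is that Lemma~\ref{L5} already controls $\beta_\lambda(u_{\Gamma,\kappa})$, i.e.\ the \emph{bulk} nonlinearity evaluated on the trace, uniformly in $L^2(0,T;H_\Gamma)$, whereas the \emph{boundary} nonlinearity $\beta_{\Gamma,\lambda}(u_{\Gamma,\kappa})$ was only controlled with the degenerate weight $\sqrt{\kappa}$ in Lemma~\ref{L6}. Combining the transferred inequality with Lemma~\ref{L5}, I obtain $\|\beta_{\Gamma,\lambda}(u_{\Gamma,\kappa})\|_{L^2(0,T;H_\Gamma)} \le C$ uniformly in $\lambda,\varepsilon,\kappa$. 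Letting $\lambda \to 0$ (by weak lower semicontinuity of the norm along the $\lambda$-limit of \cite{CF20}) this bound survives for the solutions of $({\rm P})_{\varepsilon\kappa}$, i.e.\ $\|\xi_{\Gamma,\kappa}\|_{L^2(0,T;H_\Gamma)}\le C$ uniformly in $\kappa$. Hence, along the subsequence of Theorem~\ref{thm1}, I can extract a weak $L^2(0,T;H_\Gamma)$ limit, which must coincide with $\xi_\Gamma$ by uniqueness of weak limits against the already established convergence \eqref{cv10k}; this gives $\xi_\Gamma \in L^2(0,T;H_\Gamma)$ and \eqref{pier8}.

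Once $\xi_\Gamma \in L^2(0,T;H_\Gamma)$, the remaining regularities follow by comparison. The equality in \eqref{e7weak} reads $\partial_{\boldsymbol{\nu}} u + \xi_\Gamma = \mu_\Gamma - \pi_\Gamma(u_\Gamma) + f_\Gamma$ in $Z_\Gamma'$, a.e.\ in $(0,T)$; since the right-hand side belongs to $L^2(0,T;H_\Gamma)$ (recall $\mu_\Gamma \in L^2(0,T;V_\Gamma)$ together with {\rm (A3)} and {\rm (A5)}) and $\xi_\Gamma \in L^2(0,T;H_\Gamma)$, I deduce $\partial_{\boldsymbol{\nu}} u \in L^2(0,T;H_\Gamma)$, so that the boundary equation holds in the strong sense of \eqref{e7}. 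Feeding $u \in L^\infty(0,T;V)$, $\Delta u \in L^2(0,T;H)$ and $\partial_{\boldsymbol{\nu}} u \in L^2(0,T;H_\Gamma)$ into the elliptic estimate \eqref{LM2} yields $u \in L^2(0,T;H^{3/2}(\Omega))$, and then the trace inequality \eqref{statre1} with $s=3/2$ gives $u_\Gamma = \gamma_0 u \in L^2(0,T;V_\Gamma)$. For \eqref{pier9}, I rewrite the boundary equation \eqref{ek7} of $({\rm P})_{\varepsilon\kappa}$ as $\partial_{\boldsymbol{\nu}} u_\kappa - \kappa \Delta_\Gamma u_{\Gamma,\kappa} = \mu_{\Gamma,\kappa} - \xi_{\Gamma,\kappa} - \pi_\Gamma(u_{\Gamma,\kappa}) + f_\Gamma$; its right-hand side is bounded in $L^2(0,T;H_\Gamma)$ and, by \eqref{cv9k}, \eqref{pier8} and the strong convergence of $\pi_\Gamma(u_{\Gamma,\kappa_k})$, converges weakly in $L^2(0,T;H_\Gamma)$ to $\mu_\Gamma - \xi_\Gamma - \pi_\Gamma(u_\Gamma) + f_\Gamma = \partial_{\boldsymbol{\nu}} u$, which is precisely \eqref{pier9}.

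Finally, the variational inequality in \eqref{e7weak} now involves the duality $\langle \xi_\Gamma, z_\Gamma - u_\Gamma\rangle_{Z_\Gamma',Z_\Gamma}$ with $\xi_\Gamma \in H_\Gamma$, hence it reduces to $\int_\Gamma \xi_\Gamma(z_\Gamma - u_\Gamma)\dg + \int_\Gamma \widehat{\beta}_\Gamma(u_\Gamma)\dg \le \int_\Gamma \widehat{\beta}_\Gamma(z_\Gamma)\dg$ for all $z_\Gamma \in Z_\Gamma$, and, by density of $Z_\Gamma$ in $H_\Gamma$ together with the lower semicontinuity of the integral functional, for all $z_\Gamma \in H_\Gamma$. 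By the standard characterization of the subdifferential of $v \mapsto \int_\Gamma \widehat{\beta}_\Gamma(v)\dg$ on $L^2(\Gamma)$, this is equivalent to the pointwise inclusion $\xi_\Gamma \in \beta_\Gamma(u_\Gamma)$ a.e.\ on $\Sigma$; combined with the strong boundary equation already obtained, it shows that \eqref{e7weak} is equivalent to \eqref{e7} (the converse implication being immediate by testing \eqref{e7} and using the definition of $\beta_\Gamma = \partial\widehat{\beta}_\Gamma$).

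I expect the genuine difficulty to be concentrated in the first paragraph: the only way to overcome the degeneracy of the surface-diffusion bound of Lemma~\ref{L6} as $\kappa \to 0$ is to transfer the $\kappa$-uniform control of $\beta_\lambda$ on the trace, supplied by Lemma~\ref{L5}, to the boundary graph $\beta_{\Gamma,\lambda}$, and this transfer is exactly what the two-sided comparison {\rm (A6)}, in its Yosida form, provides. The subsequent steps — comparison of terms in the boundary equation, the elliptic estimate \eqref{LM2}, and the trace theorem — are then routine.
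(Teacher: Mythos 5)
Your proposal is correct and follows essentially the same route as the paper: the Yosida form of the left-hand inequality in (A6) combined with the bound on $\beta_\lambda(u_{\Gamma,\kappa})$ from Lemma~\ref{L5} yields the uniform $L^2(0,T;H_\Gamma)$ estimate for $\xi_{\Gamma,\kappa}$, after which comparison in the boundary equation, the elliptic estimate \eqref{LM2}, and the trace inequality \eqref{statre1} give the remaining regularities and convergences. The only (immaterial) difference is that you deduce $\partial_{\boldsymbol{\nu}} u \in L^2(0,T;H_\Gamma)$ by comparison in the limit identity \eqref{e7weak}, whereas the paper first establishes the uniform bound on $\partial_{\boldsymbol{\nu}} u_\kappa - \kappa\Delta_\Gamma u_{\Gamma,\kappa}$ at the approximate level and passes to the limit.
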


\begin{proof}
The idea of the proof is essentially \pier{the same} as in \cite[Theorem 2.10]{CFS22} or \cite[Theorem 2.6]{CFS22b}.
\pier{Let us now briefly return to the derivation of uniform estimates for the approximating problem \eqref{ek3lamb}--\eqref{ek8lamb}. In light of the results presented in~\cite[Appendix]{CF20b}, it follows that the left-hand side inequality in assumption (A6) also holds for the Yosida approximations. Therefore,} 
\begin{equation*}
	\frac{1}{2C_{\beta}^2} \int_\Gamma \bigl| \beta_{\Gamma, \lambda} (u_{\Gamma, \kappa}) \bigr|^2 \dg
	\le \int_\Gamma \Bigl\{ \bigl| \beta _\lambda (u_{\Gamma,\kappa}) \bigr|^2+C_{\beta}^2  \Bigr\} \dg
\end{equation*}
a.e.\ in $(0,T)$. This implies with Lemma~\ref{L5} that 
\begin{equation*}
	\bigl\| \beta_{\Gamma,\lambda} (u_{\Gamma,\kappa}) \bigr\|_{L^2(0,T;H_\Gamma)}^2 \le 2C_{\beta}^2 \bigl( M_5^2+TC_{\beta}^2 |\Gamma| \bigr),
\end{equation*}
\pier{whence, taking $\lambda \to 0$, we deduce that} 
\begin{equation*}
	\| \xi_{\Gamma,\kappa} \|_{L^2(0,T;H_\Gamma)} \le \liminf _{\lambda \to 0} 
	\bigl\| \beta_{\Gamma,\lambda} (u_{\Gamma,\kappa}) \bigr\|_{L^2(0,T;H_\Gamma)}
	 \le \pier{C.} 
\end{equation*}
From \pier{a comparison of terms in the equation~\eqref{ek6lamb}, and thanks to~\eqref{pier5},} 
we see that
\begin{equation}
	\| \partial _{\boldsymbol{\nu}} u_{\kappa} - \kappa \Delta_\Gamma u_{\Gamma,\kappa}
	\|_{L^2(0,T;H_\Gamma)} +  
	\| 
	-\kappa \Delta _\Gamma u_{\Gamma,\kappa} 
	\| _{L^2(0,T;Z_\Gamma')} \le C.
	\label{uniest01}
\end{equation}
Then, \pier{with respect to~\eqref{cv1k}--\eqref{cv11k}, we can infer the additional weak convergences \eqref{pier8}--\eqref{pier9}
as $k \to +\infty$. At this point, it suffices to use the regularity estimate~\eqref{LM2} to deduce that $u \in L^2 \bigl( 0,T;H^{3/2}(\Omega) \bigr)$ and the trace inequality~\eqref{statre1} to conclude that $u_\Gamma \in L^2(0,T;V_\Gamma)$.
Thanks to them, we obtain the equation in \eqref{e7} directly from \eqref{e7weak} and, due to the density of $Z_\Gamma$  in
$H_\Gamma$, we recover the inclusion in~\eqref{e7} as well.}
\end{proof}

\subsection{Second asymptotic result}

\pier{The argument concerning the limiting procedure as $\varepsilon \to 0$ with $\kappa >0$ is similar to that presented in the previous section. We remark that the target problem  $({\rm P})_{\kappa}$ corresponds to the Allen--Cahn equation with a dynamic boundary 
condition of Cahn--Hilliard type. This represents a rather novel model for dynamic boundary conditions. Indeed, while the Allen--Cahn equation with dynamic boundary conditions of heat or Allen--Cahn type has been extensively studied (see, e.g., \cite{CC13, GG08, 
Isr12}), and similarly, the Cahn--Hilliard equation with dynamic boundary conditions of heat, Allen--Cahn, or Cahn--Hilliard type has been addressed in the literature (see, e.g., \cite{CF15, CFW20, Gal06, GK20, GMS09, GMS11, KLLM21, LW19}), to the best of our knowledge, 
the Allen–Cahn equation in the bulk combined with a Cahn–Hilliard type dynamic boundary condition has not yet been investigated. A key point of interest is the difference in the order of the partial differential equations: the bulk equation is second order, whereas the boundary 
equation is fourth order with respect to the spatial variables.
In what follows, we address also the asymptotic analysis linking $({\rm P})_{\varepsilon\kappa}$ and $({\rm P})_{\kappa}$.}

\begin{theorem}\label{thm2}
\pier{Assume {\rm (A1)}--{\rm (A5)}. Then there exists a quintuple} $(u, \xi, u_\Gamma, \mu_\Gamma, \xi_\Gamma)$ fulfilling the regularity properties
\begin{gather*}
	u \in H^1(0,T;H) \cap L^\infty(0,T;V) \cap L^2(0,T;W),
	\quad 
	\xi \in L^2(0,T;H), \\
	u_\Gamma \in H^1(0,T;V_\Gamma') \cap C\bigl([0,T];H_\Gamma \bigr) \cap L^\infty (0,T;V_\Gamma) \cap L^2(0,T;W_\Gamma), \\
	\mu_\Gamma \in L^2(0,T;V_\Gamma), \quad \xi_\Gamma \in L^2(0,T;H_\Gamma)
\end{gather*}
and satisfying 
\pier{\eqref{e3}--\eqref{e5}, \eqref{e8}, \eqref{k7} and the equation \eqref{k6} in the following weak sense:}
\begin{equation}  
	\langle \partial_t u_{\Gamma},z_\Gamma \rangle_{V_\Gamma',V_\Gamma}
	+ \int_\Gamma \nabla _\Gamma \mu_{\Gamma} \cdot \nabla_\Gamma z_\Gamma  \dg
	=0 \quad 
	\text{ for all } z_\Gamma \in V_\Gamma, 
	\text{ a.e.\ in } (0,T).
	\label{main2-1}
\end{equation}
\pier{Moreover,} the \pier{quintuple} $(u, \xi, u_\Gamma, \mu_\Gamma, \xi_\Gamma)$ is obtained as limit of the family 
$\{(u_\varepsilon,\mu_\varepsilon, \xi_\varepsilon$, $u_{\Gamma,\varepsilon}$, $\mu_{\Gamma,\varepsilon}, \xi_{\Gamma,\varepsilon})\}_{\varepsilon \in (0,1]}$ 
\pier{of solutions to $({\rm P})_{\varepsilon\kappa}$ as $\varepsilon \to 0 $ in the following sense: 
there is a vanishing} subsequence $\{\varepsilon_k \}_{k \in \mathbb{N}}$ such that, as $k \to +\infty$, 
\begin{align}
	&u_{\varepsilon_k} \to u \quad \text{weakly~star~in~} H^1(0,T;H) \cap L^\infty (0,T;V) \cap L^2(0,T;W),
	\label{cv1e} \\
	&u_{\varepsilon_k} \to u \quad \text{strongly~in~} C\bigl([0,T];H \bigr) \cap L^2(0,T;V),
	\label{cv2e} \\
	&\pier{ \partial _{\boldsymbol{\nu}} u_{\kappa _k} \to \partial _{\boldsymbol{\nu}} u 
	\quad \text{weakly~in~} L^2 (0,T;H_\Gamma),} \label{cv2e-bis}
	\\	
	&\varepsilon_k \mu_{\varepsilon_k} \to 0  \quad \text{strongly~in~} L^2 (0,T;V),
	\label{cv3e} \\
	&\xi_{\varepsilon_k} \to \xi \quad  \text{weakly~in~} L^2 (0,T;H),
	\label{cv4e} \\
	&u_{\Gamma,\varepsilon_k} \to u_\Gamma \quad \text{weakly~star~in~} H^1(0,T;V_\Gamma') \cap L^\infty (0,T;V_\Gamma) \cap L^2(0,T;W_\Gamma),
	\label{cv5e} \\
	&u_{\Gamma,\varepsilon_k} \to u_\Gamma \quad \text{strongly~in~} C\bigl([0,T];H_\Gamma \bigr) \cap L^2(0,T;V_\Gamma),
	\label{cv6e} \\
	&\mu_{\Gamma,\varepsilon_k} \to \mu_\Gamma \quad \text{weakly~in~} L^2 (0,T;V_\Gamma),
	\label{cv7e} \\
	&\xi_{\Gamma,\varepsilon_k} \to \xi_\Gamma \quad \text{weakly~in~} \pier{L^2 (0,T;H_\Gamma)}.
	\label{cv8e}
\end{align}
\end{theorem}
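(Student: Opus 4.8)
The plan is to follow the same scheme as in the proof of Theorem~\ref{thm1}, now letting $\varepsilon \searrow 0$ while keeping $\kappa>0$ fixed, and to exploit the fact that a fixed $\kappa$ upgrades the boundary regularity. First I would interpret the family $\{(u_\varepsilon,\mu_\varepsilon,\xi_\varepsilon,u_{\Gamma,\varepsilon},\mu_{\Gamma,\varepsilon},\xi_{\Gamma,\varepsilon})\}_{\varepsilon\in(0,1]}$ as the solutions to $({\rm P})_{\varepsilon\kappa}$ (recovered in the limit $\lambda\to0$) and observe that all the bounds in Lemmas~\ref{FE}--\ref{L6} are uniform with respect to $\varepsilon$. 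Since now $\kappa$ is a fixed positive constant, the $\sqrt\kappa$- and $\kappa^{3/2}$-weighted estimates of Lemma~\ref{L6} become genuine $\varepsilon$-uniform bounds: in particular $\xi_{\Gamma,\varepsilon}$ is bounded in $L^2(0,T;H_\Gamma)$, $\Delta_\Gamma u_{\Gamma,\varepsilon}$ is bounded in $L^2(0,T;H_\Gamma)$ so that $u_{\Gamma,\varepsilon}$ is bounded in $L^2(0,T;W_\Gamma)$, and $\partial_{\boldsymbol{\nu}} u_\varepsilon$ is bounded in $L^2(0,T;H_\Gamma)$. Combining the bound on $\Delta u_\varepsilon$ in $L^2(0,T;H)$ from Lemma~\ref{L5} with the Dirichlet datum $u_{\Gamma,\varepsilon}\in L^2(0,T;W_\Gamma)\subset L^2(0,T;H^{3/2}(\Gamma))$ and invoking elliptic regularity, I would then deduce the $\varepsilon$-uniform bound on $\|u_\varepsilon\|_{L^2(0,T;W)}$, which is precisely the extra regularity appearing in the statement.

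Next I would extract a vanishing subsequence $\{\varepsilon_k\}$ along which the weak and weak-star convergences \eqref{cv1e}, \eqref{cv2e-bis}, \eqref{cv4e}, \eqref{cv5e}, \eqref{cv7e}, \eqref{cv8e} hold, by Banach--Alaoglu applied to the bounds just collected. To upgrade these to the strong convergences \eqref{cv2e} and \eqref{cv6e} I would apply the Aubin--Lions--Simon compactness theorem of~\cite{Sim87}: the bounds of $u_{\varepsilon_k}$ in $H^1(0,T;H)\cap L^2(0,T;W)$ together with $W\hookrightarrow\hookrightarrow V\hookrightarrow\hookrightarrow H$ give strong convergence in $C([0,T];H)\cap L^2(0,T;V)$, while the bounds of $u_{\Gamma,\varepsilon_k}$ in $H^1(0,T;V_\Gamma')\cap L^2(0,T;W_\Gamma)$ together with $W_\Gamma\hookrightarrow\hookrightarrow V_\Gamma\hookrightarrow\hookrightarrow H_\Gamma\hookrightarrow V_\Gamma'$ give strong convergence in $C([0,T];H_\Gamma)\cap L^2(0,T;V_\Gamma)$. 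Assumption (A3) then yields $\pi(u_{\varepsilon_k})\to\pi(u)$ in $C([0,T];H)$ and $\pi_\Gamma(u_{\Gamma,\varepsilon_k})\to\pi_\Gamma(u_\Gamma)$ in $C([0,T];H_\Gamma)$, and the limit $u_\Gamma$ inherits the $C([0,T];H_\Gamma)$ regularity claimed.

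The crucial new feature is the disappearance of the bulk potential $\mu$. Here I would use $\sqrt\varepsilon\,\|\mu_\varepsilon\|_{L^2(0,T;V)}\le M_4$ from Lemma~\ref{L4} to get $\|\varepsilon_k\mu_{\varepsilon_k}\|_{L^2(0,T;V)}\le\sqrt{\varepsilon_k}\,M_4\to0$, which is \eqref{cv3e}; likewise, since $\Delta\mu_\varepsilon=0$ a.e., the estimate \eqref{normal1} gives $\varepsilon\|\partial_{\boldsymbol{\nu}}\mu_\varepsilon\|_{L^2(0,T;Z_\Gamma')}\le C\sqrt\varepsilon\,M_1\to0$ and $\varepsilon\,\|\nabla\mu_\varepsilon\|_{L^2(0,T;H)}\le\sqrt\varepsilon\,M_1\to0$. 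Passing to the limit in the weak formulation \eqref{ek1and6}, the $\varepsilon$-bulk term drops; observing that every $z_\Gamma\in V_\Gamma$ admits a lift $z\in V$ with $z_{|_\Gamma}=z_\Gamma$, so that $(z,z_\Gamma)\in\boldsymbol{V}$, I recover the decoupled boundary equation \eqref{main2-1}.

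Finally I would pass to the limit in the remaining relations. Weak convergence of $\partial_t u_{\varepsilon_k}$, $\Delta u_{\varepsilon_k}$, $\xi_{\varepsilon_k}$ and strong convergence of $\pi(u_{\varepsilon_k})$ give the Allen--Cahn equation in \eqref{e3}, and the inclusion $\xi\in\beta(u)$ follows from the demiclosedness of the maximal monotone operator induced by $\beta$ on $L^2(Q)$, using the strong $L^2(Q)$ convergence of $u_{\varepsilon_k}$ against the weak convergence \eqref{cv4e}. Since $\kappa$ is fixed, the boundary equation \eqref{k7} is a linear identity among terms that all converge weakly in $L^2(0,T;H_\Gamma)$ (the nonlinear one strongly via \eqref{cv6e}), so it passes to the limit directly; the inclusion $\xi_\Gamma\in\beta_\Gamma(u_\Gamma)$ then follows again by demiclosedness on $L^2(\Sigma)$ from \eqref{cv6e} and \eqref{cv8e}, while the initial conditions \eqref{e5} and \eqref{e8} are preserved by the $C([0,T];H)$ and $C([0,T];H_\Gamma)$ convergences. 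The main obstacle I anticipate is making the uniform $L^2(0,T;W)$ and $L^2(0,T;W_\Gamma)$ regularity rigorous for fixed $\kappa$ and verifying that every $\varepsilon$-weighted bulk quantity, notably the flux $\varepsilon\partial_{\boldsymbol{\nu}}\mu_\varepsilon$ in \eqref{ek6}, genuinely vanishes, so that $\mu$ is eliminated consistently; by contrast, the identification of $\xi_\Gamma$ is considerably easier than in Theorem~\ref{thm1}, since the surviving surface diffusion keeps $\xi_\Gamma\in L^2(0,T;H_\Gamma)$ and spares us the $Z_\Gamma'$-valued variational-inequality argument.
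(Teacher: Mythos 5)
Your proposal is correct and follows essentially the same route as the paper: reuse the $\varepsilon$-uniform estimates of Lemmas~\ref{FE}--\ref{L6}, note that for fixed $\kappa>0$ the $\sqrt{\kappa}$- and $\kappa^{3/2}$-weighted bounds upgrade to genuine uniform bounds yielding $u_{\Gamma,\varepsilon}\in L^2(0,T;W_\Gamma)$ and, via elliptic regularity with the Dirichlet datum, $u_\varepsilon\in L^2(0,T;W)$, then pass to the limit by weak/weak-star compactness, Aubin--Lions--Simon, the vanishing of the $\varepsilon$-weighted bulk terms from $\sqrt{\varepsilon}\,\|\mu_\varepsilon\|_{L^2(0,T;V)}\le M_4$, and demiclosedness for the inclusions. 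This matches the paper's proof of Theorem~\ref{thm2} step for step.
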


\begin{proof} 
\pier{Let now the family 
$\{(u_\varepsilon,\mu_\varepsilon, \xi_\varepsilon, u_{\Gamma,\varepsilon},$ $\mu_{\Gamma,\varepsilon}, \xi_{\Gamma,\varepsilon})\}_{\varepsilon \in (0,1]}$ 
denote the solutions to~$({\rm P})_{\varepsilon\kappa}$, obtained by passing to the limit as $\lambda\to 0$ in the approximating problem (cf.~Subsection~3.1). Then, the uniform estimate~\eqref{pier5} can be 
confirmed for $(u_\varepsilon,\mu_\varepsilon, \xi_\varepsilon, u_{\Gamma,\varepsilon},$ $\mu_{\Gamma,
\varepsilon}, \xi_{\Gamma,\varepsilon})$. Moreover, in view of Lemma~\ref{L6}, we also point out the 
estimate 
\begin{align}
   &{\kappa}^{1/2} \|u_\varepsilon\| _{L^2(0,T;H^{3/2}(\Omega))} +
	{\kappa}^{1/2} \|\partial_{\boldsymbol{\nu}} u_\varepsilon\| _{L^2(0,T;H_\Gamma)} 
	+ {\kappa}^{3/2} \|\Delta_\Gamma u_{\Gamma,\kappa} \|_{L^2(0,T;H_\Gamma)} 
	 \le C. \label{pier10}
\end{align}%
Then, by elliptic regularity on the boundary we have that 
\begin{equation}
	\kappa^{3/2} \| u_{\Gamma,\varepsilon}\|_{L^2(0,T;W_\Gamma)} \le C. \label{unie4}
\end{equation}%
Then, owing to the boundary condition~\eqref{e4} and elliptic regularity we deduce that (see, e.g., \cite[Theorem~3.2, p.~1.79]{BG87}) 
\begin{equation}
	\kappa^{3/2} \| u_{\varepsilon}\|_{L^2(0,T;W)} \le C. \label{unie5}
\end{equation}
Hence, based on the uniform estimates, there exists a quintuple
 $(u, \xi, u_\Gamma, \mu_\Gamma, \xi_\Gamma)$ and 
 a subsequence $\{\varepsilon_k \}_{k \in \mathbb{N}}$ such that 
the \pier{weak and weak star convergences} stated in \eqref{cv1e}--\eqref{cv8e} hold as $k \to +\infty$.
Furthermore, from \eqref{pier5} we obtain the strong convergence~\eqref{cv3e}. 
By applying the the Aubin--Lions compactness theorems~(see~\cite[Sect.~8, Cor.~4]{Sim87}), 
we also derive the strong convergences \eqref{cv2e} and \eqref{cv6e}. These, in particular, ensure that $u$ and $u_\Gamma$  satisfy the initial conditions~\eqref{e5} and~\eqref{e8}, respectively.
Moreover, due to the {L}ipschitz continuity of $\pi$ and $\pi_\Gamma$, we obtain the strong convergences 
of $\pi(u_{\varepsilon_k})$ and $\pi_\Gamma(u_{\varepsilon_k})$, as in the proof of Theorem~\ref{thm1}.
Then, it is straightforward to pass to the limit in the equations in \eqref{ek3} and \eqref{ek7}.
Additionally, by the standard demi-closedness property of maximal monotone operators~\cite{Bar10, Bre73}, the inclusions in \eqref{e3} and \eqref{e7} follow directly. The trace condition~\eqref{e4} is also a direct consequence of the strong convergences \eqref{cv2e} and \eqref{cv6e}. 
Finally, the variational equation~\eqref{main2-1} is obtained immediately from \eqref{ek1and6}, inlight of~\eqref{cv3e}. This completes the proof of the theorem.}
\end{proof}
\smallskip

\subsection{Both parameters tending to zero}

\pier{In order to deal with the limiting procedure $({\rm P})_{\varepsilon\kappa} \to ({\rm P})$, here we let $\{(u_{\takeshi{\varepsilon, \kappa}},\mu_{\takeshi{\varepsilon, \kappa}}, 
\xi_{\takeshi{\varepsilon, \kappa}}, u_{\Gamma,\takeshi{\varepsilon, \kappa}},$ 
$\mu_{\Gamma,\takeshi{\varepsilon, \kappa}}, \xi_{\Gamma,\takeshi{\varepsilon, \kappa}})
\}_{\takeshi{\varepsilon, \kappa} \in (0,1]}$ 
denote the solution of $({\rm P})_{\varepsilon\kappa}$. 
We consider the case $\varepsilon \to 0$ and $\kappa\to 0$. 
Recalling \eqref{ek1}--\eqref{ek8} and Proposition~\ref{ANA}, it is clear that 
$u_{\takeshi{\varepsilon, \kappa}}$, $\mu_{\takeshi{\varepsilon, \kappa}}$, 
$\xi_{\takeshi{\varepsilon, \kappa}}$, $u_{\Gamma,\takeshi{\varepsilon, \kappa}}$,  
$\mu_{\Gamma,\takeshi{\varepsilon, \kappa}}$, $\xi_{\Gamma,\takeshi{\varepsilon, \kappa}}$
 satisfy 
\begin{align} 
	\int_\Omega \partial_t u_\ke z \dx 
	+ 
	\int_\Omega \nabla u_\ke \cdot \nabla z\dx 
	+
	\kappa
	\int_\Gamma \nabla _\Gamma u_\Gke \cdot \nabla _\Gamma z_\Gamma \dg 
	\notag \\
	+
	\int_\Omega \bigl( \xi_\ke + \pi(u_\ke) \bigr) z \dx 
	+
	\int_\Gamma \bigl( \xi_\Gke+
	\pi_\Gamma(u_\Gke) \bigr) z_\Gamma \dg
	= 
	\int_\Omega f z \dx 
	\notag \\
	+
	\int_\Gamma \bigl(
	f_\Gamma +\mu_\Gke \bigr)z_\Gamma \dg\quad {\rm for~all~} (z,z_\Gamma ) \in \boldsymbol{V}, \ {\rm a.e.~in~} (0,T),
	\label{ee1}
	\\[2mm]
   (u_\ke)_{|_\Gamma} = u_\Gke
   \quad {\rm a.e.\ on~} \Sigma, \quad 
  (\mu_\ke)_{|_\Gamma} =\mu_\Gke
   \quad {\rm a.e.\ on~} \Sigma, \label{pier11} \\[2mm]
   \xi_{\ke} \in \beta (u_{\ke}) 
   \quad {\rm a.e.\ in~} Q, \quad 
  \xi_{\Gke} \in \beta_{\Gamma} (u_\Gke) 
   \quad {\rm a.e.\ on~} \Sigma, \label{pier12}
	\\[2mm]
	\langle \partial_t u_\Gke,z_\Gamma \rangle_{V_\Gamma',V_\Gamma}
	+ \varepsilon \int_\Omega \nabla \mu_\ke \cdot \nabla z \dx 
	+ \int_\Gamma \nabla _\Gamma \mu_\Gke \cdot \nabla_\Gamma z_\Gamma \dg
	=0 \notag \\
	\quad {\rm for~all~} (z,z_\Gamma ) \in \boldsymbol{V}, \ {\rm a.e.~in~} (0,T), \label{ee2}\\[2mm]
     u_\ke(0) = u_0 \quad {\rm a.e.\ in~} \Omega,
 \quad 
  u_\Gke (0) = u_{0\Gamma} 
    \quad {\rm a.e.\ on~} \Gamma. 
   \label{pier13}
\end{align}
We also recall the uniform estimate \eqref{pier5}, which is still useful for the proof of the following result.}
\pier{%
\begin{theorem}\label{thm3}
Under the assumptions {\rm (A1)}--{\rm (A5)}, there exists at least one \pier{quintuple} $(u, \xi, u_\Gamma, \mu_\Gamma, \xi_\Gamma)$ fulfilling
\begin{gather*}
	u \in H^1(0,T;H) \cap L^\infty(0,T;V),\quad 
	\Delta u \in L^2(0,T;H), \quad 
	\xi \in L^2(0,T;H), 	
	\\
	u_\Gamma \in H^1(0,T;V_\Gamma') \cap C\bigl([0,T];H_\Gamma \bigr) \cap L^\infty (0,T;Z_\Gamma), \\
	\mu_\Gamma \in L^2(0,T;V_\Gamma), \quad \xi_\Gamma \in L^2(0,T;Z_\Gamma')
\end{gather*}
and satisfying \eqref{e3}--\eqref{e5}, \eqref{e8}, and the equation \eqref{k6} and the conditions \eqref{e7} in the following weak sense: 
\begin{align}
	\langle \partial_t u_{\Gamma},z_\Gamma \rangle_{V_\Gamma',V_\Gamma}
	+ \int_\Gamma \nabla _\Gamma \mu_{\Gamma} \cdot \nabla_\Gamma z_\Gamma \dg
	=0 \quad 
	\text{ for all } z_\Gamma \in V_\Gamma, 
	\text{ a.e.\ in } (0,T),
	\label{main3-2} \\
	\noalign{\smallskip}
	\int_\Gamma \mu_\Gamma z_\Gamma \dg =
	\langle \partial_{\boldsymbol{\nu}} u + \xi _{\Gamma},z_\Gamma \rangle_{Z_\Gamma',Z_\Gamma}
	+ \int_\Gamma \bigl( \pi_\Gamma(u_\Gamma) -f_\Gamma \bigr) z_\Gamma \dg \quad \hbox{and} 
	\notag 
	\\
	\langle \xi_\Gamma ,z_\Gamma-u_\Gamma \rangle_{Z_\Gamma',Z_\Gamma } 
	+ \int_\Gamma \widehat{\beta}_\Gamma (u_\Gamma) \dg 
	\le \int_\Gamma \widehat{\beta}_\Gamma(z_\Gamma)\, \dg \notag \\
	\mbox{for all } z_\Gamma \in Z_\Gamma, 
	\mbox{ a.e.\ in } (0,T).
	\label{main3-5}
\end{align}
Moreover, the \pier{quintuple} $(u, \xi, u_\Gamma, \mu_\Gamma, \xi_\Gamma)$ is obtained as limit of the family $\{(u_\ke,\mu_\ke, \xi_\ke,$ $u_\Gke,\mu_\Gke, \xi_\Gke)\}_{\takeshi{\varepsilon,\kappa} \in (0,1]}
$ of solutions to $({\rm P})_{\varepsilon\kappa}$ as $(\ke) \to (0,0)$ in the following sense:
there is a subsequence $\{(\kek) \}_{k \in \mathbb{N}}$ such that, as $k \to +\infty$, 
\begin{align}
	&u_{\kek} \to u \quad \text{weakly~star~in~} H^1(0,T;H) \cap L^\infty (0,T;V),
	\label{cv1}	\\
	&u_{\kek} \to u \quad \text{strongly~in~} C\bigl([0,T];H \bigr),
	\label{cv2} \\
	&\varepsilon_k \mu_{\kek} \to 0 \quad \text{strongly~in~} L^2 (0,T;V),
	\label{cv3} \\
	&\xi_{\kek} \to \xi \quad  \text{weakly~in~} L^2 (0,T;H),
	\label{cv4} \\
	&u_\Gkek \to u_\Gamma \quad \text{weakly~star~in~} H^1(0,T;V_\Gamma') \cap L^\infty (0,T;Z_\Gamma),
	\label{cv5} \\
	&u_\Gkek \to u_\Gamma \quad \text{strongly~in~} C\bigl([0,T];H_\Gamma \bigr),
	\label{cv6} \\
	&\kappa_k u_\Gkek  \to 0 \quad \text{strongly~in~} L^\infty (0,T;V_\Gamma),
	\label{cv6b} \\
	&\mu_\Gkek  \to \mu_\Gamma \quad \text{weakly~in~} L^2 (0,T;V_\Gamma),
	\label{cv7} \\
	&\xi_\Gkek  \to \xi_\Gamma \quad \text{weakly~in~} L^2 (0,T;V_\Gamma'),
	\label{cv8} \\
	& (-\kappa_k \Delta_\Gamma u_\Gkek  + \xi_\Gkek)  \to \xi_\Gamma \quad 
	\text{weakly~in~} L^2 (0,T;Z_\Gamma').
	\label{cv8b}
\end{align}
\end{theorem}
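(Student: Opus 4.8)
The plan is to combine the two limiting techniques already developed for Theorem~\ref{thm1} (the limit $\kappa\to 0$) and Theorem~\ref{thm2} (the limit $\varepsilon\to 0$), since the present statement concerns the simultaneous limit $(\ke)\to(0,0)$. The starting point is the uniform estimate~\eqref{pier5}, which holds for the solutions $(u_\ke,\mu_\ke,\xi_\ke,u_\Gke,\mu_\Gke,\xi_\Gke)$ of $({\rm P})_{\varepsilon\kappa}$ independently of both parameters. By the Banach--Alaoglu theorem I would extract a subsequence $\{(\kek)\}_{k\in\mathbb{N}}$ along which all the weak and weak star convergences~\eqref{cv1}, \eqref{cv4}, \eqref{cv5}, \eqref{cv7}, \eqref{cv8}, and~\eqref{cv8b} hold. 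The convergence~\eqref{cv3} follows from the bound $\sqrt{\varepsilon}\,\|\mu_\ke\|_{L^2(0,T;V)}\le C$ in~\eqref{pier5}, since then $\varepsilon_k\mu_\kek=\sqrt{\varepsilon_k}\,(\sqrt{\varepsilon_k}\,\mu_\kek)\to 0$; likewise, \eqref{cv6b} follows from $\sqrt{\kappa}\,\|u_\Gke\|_{L^\infty(0,T;V_\Gamma)}\le C$.

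Next, I would invoke the Aubin--Lions--Simon compactness theorem~\cite[Sect.~8, Cor.~4]{Sim87}, together with the compact embeddings $V\hookrightarrow\hookrightarrow H$ and $Z_\Gamma\hookrightarrow\hookrightarrow H_\Gamma$, to upgrade~\eqref{cv1} and~\eqref{cv5} to the strong convergences~\eqref{cv2} and~\eqref{cv6}. From these I would obtain the initial conditions~\eqref{e5} and~\eqref{e8}, the trace condition~\eqref{e4}, and, by the Lipschitz continuity (A3), the strong convergence of $\pi(u_\kek)$ and $\pi_\Gamma(u_\Gkek)$. Passing to the limit in the bulk equation~\eqref{ek3} and using the demiclosedness of the maximal monotone operator induced by $\beta$~\cite{Bar10}, together with~\eqref{cv2} and~\eqref{cv4}, yields the inclusion $\xi\in\beta(u)$ and hence the full equation~\eqref{e3}; a comparison of terms then gives $\Delta u\in L^2(0,T;H)$.

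To recover the boundary dynamics, I would first pass to the limit in~\eqref{ee2}: the bulk term $\varepsilon\int_\Omega\nabla\mu_\ke\cdot\nabla z\dx$ vanishes by~\eqref{cv3}, while the remaining terms converge by~\eqref{cv5} and~\eqref{cv7}, producing exactly~\eqref{main3-2}. For the boundary condition I would take test pairs $(z,z_\Gamma)\in\boldsymbol{Z}$ and pass to the limit in the combined weak identity~\eqref{ee1}; here the decisive point is that the surface diffusion contribution and the maximal monotone term can be grouped as $\langle -\kappa_k\Delta_\Gamma u_\Gkek+\xi_\Gkek,z_\Gamma\rangle_{Z_\Gamma',Z_\Gamma}$, whose weak limit is $\langle\xi_\Gamma,z_\Gamma\rangle_{Z_\Gamma',Z_\Gamma}$ precisely by~\eqref{cv8b}, the remaining $\kappa_k$ surface term being absorbed through~\eqref{cv6b}. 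This delivers the limiting variational identity on $\boldsymbol{Z}$, from which the equality in~\eqref{main3-5} follows by the characterization~\eqref{normal} of the normal derivative and a cancellation against the bulk equation~\eqref{e3}, exactly as in the proof of Theorem~\ref{thm1}.

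The main obstacle is the convex inequality in~\eqref{main3-5}, since $\xi_\Gamma$ is only known to lie in $L^2(0,T;Z_\Gamma')$, so that the inclusion $\xi_\Gamma\in\beta_\Gamma(u_\Gamma)$ cannot be read off directly from demiclosedness in $H_\Gamma$. Here I would reproduce the $\limsup$ argument of Theorem~\ref{thm1}: testing~\eqref{ee1} with $(z,z_\Gamma)=(u_\kek,u_\Gkek)$ and combining weak-strong convergence with the lower semicontinuity of the seminorms, I would establish
\begin{equation*}
	\limsup_{k\to+\infty}\iint_\Sigma\xi_\Gkek\, u_\Gkek\dg\dt
	\le\int_0^T\langle\xi_\Gamma,u_\Gamma\rangle_{Z_\Gamma',Z_\Gamma}\dt,
\end{equation*}
relying crucially on~\eqref{cv8b} to identify the weak limit of $-\kappa_k\Delta_\Gamma u_\Gkek+\xi_\Gkek$. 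Combining this bound with the lower semicontinuity of the convex extension of $\widehat{\beta}_\Gamma$ to $L^2(\Sigma)$ gives the subdifferential inequality first for test functions $\zeta_\Gamma\in L^2(0,T;V_\Gamma)$, and then, by the density argument via the resolvents $\zeta_{\Gamma,n}-\tfrac1n\Delta_\Gamma\zeta_{\Gamma,n}=\zeta_\Gamma$ and~\cite[Lemma~A.1]{CF16}, for all $\zeta_\Gamma\in L^2(0,T;Z_\Gamma)$, which is precisely the formulation~\eqref{main3-5}. The remaining regularity assertions on $(u,\xi,u_\Gamma,\mu_\Gamma,\xi_\Gamma)$ follow at once from the uniform estimate~\eqref{pier5} and the lower semicontinuity of norms.
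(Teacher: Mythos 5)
Your proposal is correct and follows essentially the same route as the paper, whose own proof of Theorem~\ref{thm3} is only a sketch stating that one combines the uniform estimate~\eqref{pier5} with the limiting arguments of Theorems~\ref{thm1} and~\ref{thm2}; you have filled in precisely those details (compactness, demiclosedness in the bulk, the $\limsup$ argument with~\eqref{cv8b} for the boundary subdifferential, and the resolvent density step), including the auxiliary weak convergences of $\Delta u_\kek$ and $\partial_{\boldsymbol{\nu}} u_\kek$ that the paper singles out.
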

\begin{proof} 
The proof proceeds along the lines of the arguments used in Theorems~\ref{thm1} and~\ref{thm2}, beginning with the uniform estimates and then passing to the limit via weak and weak star compactness along a suitable subsequence $\{(\kek) \}.$ In addition to the convergences stated in~\eqref{cv1}--\eqref{cv8b} we also observe the following additional convergence properties 
\begin{align*}
	&\Delta u_{\kek} \to \Delta u \quad \text{weakly~in~} L^2 (0,T;H),
	\\
	& \partial _{\boldsymbol{\nu}} u_\kek \to \partial _{\boldsymbol{\nu}} u 
	\quad \text{weakly~in~} L^2 (0,T;Z_\Gamma'), 
\end{align*} 
which are also useful in the passage to the limit. At this point, it suffices to closely follow the arguments employed in the proofs of the two preceding asymptotic results in order to arrive at the desired conclusion. 
\end{proof}}%

\begin{corollary}\label{cor3}
\pier{In the same framework of Theorem~\ref{thm3} and under the further assumption~{\rm (A6)}, 
the found \pier{quintuple} $(u,\mu, \xi, u_\Gamma, \mu_\Gamma, \xi_\Gamma)$ additionally fulfils 
\begin{equation*}
	u \in L^2\bigl( 0,T;H^{3/2}(\Omega) \bigr), \quad 
	\partial_{\boldsymbol{\nu}} u \in L^2(0,T;H_\Gamma), \quad 
	u_\Gamma \in L^2(0,T;V_\Gamma), \quad
	\xi_\Gamma \in L^2(0,T;H_\Gamma).
\end{equation*}
Moreover, the further convergence properties 
\begin{align}
	&\xi_{\Gamma,\takeshi{\varepsilon_k,\kappa_k}} \to \xi_\Gamma \quad \text{weakly~in~} L^2 (0,T;H_\Gamma), \label{pier14}
	\\
	&\partial _{\boldsymbol{\nu}} u_{\takeshi{\varepsilon_k,\kappa _k}} - {\kappa_k} \Delta_\Gamma u_{\Gamma,\takeshi{\varepsilon_k,\kappa_k}}  \to \partial _{\boldsymbol{\nu}} u \quad \text{weakly~in~} 
	L^2 (0,T;H_\Gamma).  \label{pier15}
\end{align} 
hold and the conditions in~\eqref{main3-5} can be equivalently formulated as~\eqref{e7}.}
\end{corollary}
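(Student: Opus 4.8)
The plan is to follow, line by line, the argument already carried out for Corollary~\ref{cor1}, now applied to the family $\{(u_\ke,\mu_\ke,\xi_\ke,u_\Gke,\mu_\Gke,\xi_\Gke)\}$ of solutions to $({\rm P})_{\varepsilon\kappa}$ along the subsequence $\{(\kek)\}$ of Theorem~\ref{thm3}. This is possible because all the uniform bounds collected in Lemmas~\ref{L5}--\ref{L6} are independent of \emph{both} $\varepsilon$ and $\kappa$. The first step is to return to the approximating problem~\eqref{ek3lamb}--\eqref{ek8lamb} and exploit assumption~(A6): by~\cite[Appendix]{CF20b} the left-hand inequality in (A6) survives for the Yosida approximations, so that
\begin{equation*}
	\frac{1}{2C_\beta^2}\int_\Gamma \bigl|\beta_{\Gamma,\lambda}(u_\Gke)\bigr|^2\dg
	\le \int_\Gamma \Bigl\{ \bigl|\beta_\lambda(u_\Gke)\bigr|^2 + C_\beta^2\Bigr\}\dg
\end{equation*}
a.e.\ in $(0,T)$. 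Combining this with the bound on $\|\beta_\lambda(u_\Gke)\|_{L^2(0,T;H_\Gamma)}$ from Lemma~\ref{L5} and letting $\lambda\to 0$ yields a uniform estimate $\|\xi_\Gke\|_{L^2(0,T;H_\Gamma)}\le C$, independent of $\varepsilon$ and $\kappa$.

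Next, a comparison of terms in~\eqref{ek6lamb}, together with the uniform estimate~\eqref{pier5} and the control of $\mu_\Gke$ in $L^2(0,T;V_\Gamma)$, gives (exactly as in~\eqref{uniest01})
\begin{equation*}
	\bigl\| \partial_{\boldsymbol{\nu}} u_\ke - \kappa\Delta_\Gamma u_\Gke \bigr\|_{L^2(0,T;H_\Gamma)}
	+ \bigl\| -\kappa\Delta_\Gamma u_\Gke \bigr\|_{L^2(0,T;Z_\Gamma')}\le C.
\end{equation*}
From these two uniform bounds I would extract weak limits in $L^2(0,T;H_\Gamma)$ along $\{(\kek)\}$, obtaining~\eqref{pier14} and~\eqref{pier15}; the identification of the two limits with $\xi_\Gamma$ and $\partial_{\boldsymbol{\nu}} u$ then follows from uniqueness of weak limits, since the same sequences already converge to $\xi_\Gamma$ and $\partial_{\boldsymbol{\nu}} u$ in the weaker topology recorded in the proof of Theorem~\ref{thm3} (namely~\eqref{cv8},~\eqref{cv8b} and $\partial_{\boldsymbol{\nu}} u_\kek\to\partial_{\boldsymbol{\nu}} u$ weakly in $L^2(0,T;Z_\Gamma')$). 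In particular $\xi_\Gamma,\partial_{\boldsymbol{\nu}} u\in L^2(0,T;H_\Gamma)$.

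The regularity of $u$ then comes from the elliptic estimate~\eqref{LM2}: since $u\in L^\infty(0,T;V)$, $\Delta u\in L^2(0,T;H)$ and now $\partial_{\boldsymbol{\nu}} u\in L^2(0,T;H_\Gamma)$, I conclude $u\in L^2(0,T;H^{3/2}(\Omega))$, whence the trace inequality~\eqref{statre1} with $s=3/2$ gives $u_\Gamma\in L^2(0,T;V_\Gamma)$. Finally, with $\partial_{\boldsymbol{\nu}} u,\xi_\Gamma\in L^2(0,T;H_\Gamma)$ and $u_\Gamma\in L^2(0,T;V_\Gamma)$, every duality pairing $\langle\cdot,\cdot\rangle_{Z_\Gamma',Z_\Gamma}$ appearing in~\eqref{main3-5} reduces to an $H_\Gamma$ inner product, so the first identity becomes the pointwise equation $\mu_\Gamma=\partial_{\boldsymbol{\nu}} u+\xi_\Gamma+\pi_\Gamma(u_\Gamma)-f_\Gamma$ a.e.\ on $\Sigma$, while the variational inequality, upgraded by the density of $Z_\Gamma$ in $H_\Gamma$, delivers the inclusion $\xi_\Gamma\in\beta_\Gamma(u_\Gamma)$ a.e.\ on $\Sigma$; this is precisely~\eqref{e7}. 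I expect the one genuinely delicate point to be the identification of the $L^2(0,T;H_\Gamma)$ weak limit of $\partial_{\boldsymbol{\nu}} u_\ke-\kappa\Delta_\Gamma u_\Gke$: the summand $\kappa\Delta_\Gamma u_\Gke$ is \emph{not} uniformly bounded in $L^2(0,T;H_\Gamma)$ (only $\kappa^{3/2}\Delta_\Gamma u_\Gke$ is, by Lemma~\ref{L6}), so the boundedness of the combination is a structural cancellation inside~\eqref{ek6lamb}, and its limit must be pinned down in the weaker space $L^2(0,T;Z_\Gamma')$, where $\partial_{\boldsymbol{\nu}} u_\ke$ converges to $\partial_{\boldsymbol{\nu}} u$ and the surface-diffusion contribution is controlled separately.
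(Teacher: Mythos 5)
Your proposal is correct and follows essentially the same route as the paper, which in fact omits the proof of this corollary by declaring it identical to that of Corollary~\ref{cor1}: the (A6)-based uniform $L^2(0,T;H_\Gamma)$ bound on the Yosida approximations $\beta_{\Gamma,\lambda}(u_{\Gamma,\varepsilon,\kappa})$, the comparison in the boundary equation giving the bound on $\partial_{\boldsymbol{\nu}} u_{\varepsilon,\kappa}-\kappa\Delta_\Gamma u_{\Gamma,\varepsilon,\kappa}$, the identification of the strengthened weak limits via uniqueness against the $Z_\Gamma'$-convergences of Theorem~\ref{thm3}, and the upgrade of \eqref{main3-5} to \eqref{e7} through \eqref{LM2}, \eqref{statre1} and density. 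Your closing remark on why the limit of the combination must be pinned down in the weaker space is exactly the right reading of the cancellation at work.
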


\smallskip
\pier{The proof of this result follows identically from that of Corollary~\ref{cor1} and is therefore omitted for brevity.}

\section{Continuous dependence results}
\label{cont-dep}

\pier{In this section, we address the continuous dependence results for the problems $({\rm P})_{\varepsilon}$, $({\rm P})_{\kappa}$, and 
$({\rm P})$, respectively. Throughout the discussion, we assume that conditions  
{\rm (A1)}--{\rm (A5)} are satisfied. With this assumption in place, all theorems presented in this section imply the uniqueness of the functions  $u$ and $u_\Gamma$ corresponding to each of the problems~$({\rm P})_{\varepsilon}$, $({\rm P})_{\kappa}$, and $({\rm P})$. 
Moreover, if the graphs $\beta$ and $\beta_\Gamma$ are single-valued functions, then the remaining unknowns are also uniquely determined (cf.~Theorems~\ref{thm1}, \ref{thm2}, and \ref{thm3}).
It is important to note that although the same notation $\bar{u}$ is used throughout this section, it refers to different functions in each subsection, depending on the specific problem under consideration.}

\subsection{Continuous dependence for $({\rm P})_{\varepsilon}$}
\pier{Throughout this subsection, let
$(u^{(i)},\mu^{(i)}, \xi^{(i)},$ $ u_\Gamma^{(i)},\mu_\Gamma^{(i)}, \xi_\Gamma^{(i)})$, $i=1,2$, denote 
two solutions 
of Problem~$({\rm P})_{\varepsilon}$ corresponding to the data $\{u_{0}^{(i)}, u_{0\Gamma}^{(i)}, f^{(i)}, f_\Gamma^{(i)}\}$,
$i=1,2$, that satisfy the assumptions {\rm (A4)} and {\rm (A5)}. We further assume that
\begin{equation}
	(u_{0\Gamma}^{(1)}-u_{0\Gamma}^{(2)}, 1)_{H_\Gamma} = 0,
\label{pier16}
\end{equation}
so that the mean value $m_\Gamma$ is the same for both initial data on the boundary. 
By ``solutions'' to Problem~$({\rm P})_{\varepsilon}$, we mean that the sextuplets~$(u^{(i)},\mu^{(i)}, \xi^{(i)}, u_\Gamma^{(i)}, \mu_\Gamma^{(i)}, \xi_\Gamma^{(i)})$, $i=1,2$, possess the regularity properties stated in~Theorem~\ref{thm1} and satisfy the conditions~\eqref{e1}--\eqref{e5}, \eqref{e8}, \eqref{e6weak}, \eqref{e7weak} in terms of  their respective data.  
We now put $\bar{u}=u^{(1)}-u^{(2)}$ and analogously use the bar notation for the differences of the other functions. With this notation in place, we can derive the estimate stated in the following result.}

\begin{theorem}\label{thm1b}
There exists a positive constant \pier{$C$, independent of~$\varepsilon \in (0,1]$,} such that
\begin{align*}
	& \|\bar{u}\|_{C([0,T];H)} + \|\bar{u}\|_{L^2(0,T;V)} +
	\|\bar{u}_\Gamma\|_{C([0,T];V_\Gamma')}
	\notag \\ 
	& \le\pier{C} \left( \|\bar{u}_0\|_H + \|\bar{u}_{0\Gamma}\|_{V_\Gamma'} 
	+ \|\bar{f}\|_{L^2(0,T;V')} + \|\bar{f}_\Gamma\|_{L^2(0,T;H_\Gamma)}\right).
\end{align*}
\end{theorem}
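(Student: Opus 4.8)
The plan is to run an energy estimate on the difference of two solutions, coupling a bulk test with $\bar u$ to a dual (inverse-operator) estimate on the boundary that accounts for the Cahn--Hilliard structure of \eqref{e6weak}--\eqref{e7weak}. Writing the system for the differences, $\bar u$ solves $\partial_t\bar u-\Delta\bar u+\bar\xi+\bigl(\pi(u^{(1)})-\pi(u^{(2)})\bigr)=\bar f$ with $\bar u(0)=\bar u_0$, while on $\Gamma$ one has $\bar\mu_\Gamma=\partial_{\boldsymbol{\nu}}\bar u+\bar\xi_\Gamma+\bigl(\pi_\Gamma(u_\Gamma^{(1)})-\pi_\Gamma(u_\Gamma^{(2)})\bigr)-\bar f_\Gamma$ together with the weak dynamics. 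First I would record the conservation of the boundary mean: choosing the constant test pair $(1,1)\in\boldsymbol{V}$ in the $\boldsymbol{V}$-form of \eqref{e6} gives $\langle\partial_t\bar u_\Gamma,1\rangle_{V_\Gamma',V_\Gamma}=0$, so by assumption \eqref{pier16} we get $\int_\Gamma\bar u_\Gamma(t)\dg=0$ for all $t$. This zero-mean property is exactly what makes the dual approach available.

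Next I would introduce, for each $t$, the solution $(\bar y,\bar y_\Gamma)\in\boldsymbol{V}$ with $\int_\Gamma\bar y_\Gamma\dg=0$ of the auxiliary variational problem \eqref{aux} with right-hand side $\bar u_\Gamma(t)$ in place of $u_{\Gamma,\kappa}-m_\Gamma$; it is solvable precisely because $\bar u_\Gamma$ is zero-mean. Setting $\|\bar u_\Gamma\|_*^2:=\varepsilon\|\nabla\bar y\|_H^2+\|\nabla_\Gamma\bar y_\Gamma\|_{H_\Gamma}^2=\langle\bar u_\Gamma,\bar y_\Gamma\rangle$, the Poincar\'e inequalities \eqref{Poin1}--\eqref{Poin2}, the recovery operator \eqref{recov}--\eqref{recover}, and the bound $\varepsilon\le1$ yield on zero-mean functions the two-sided estimate $c\|\bar u_\Gamma\|_{V_\Gamma'}\le\|\bar u_\Gamma\|_*\le C\|\bar u_\Gamma\|_{V_\Gamma'}$ with constants \emph{independent of $\varepsilon$} (the lower bound by testing \eqref{aux} with the extension $\bigl(\mathcal R z_\Gamma,z_\Gamma\bigr)$ of an arbitrary $z_\Gamma$ and invoking $Z_\Gamma'\hookrightarrow V_\Gamma'$). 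Since the solution operator is linear, symmetric and time-independent, the identity $\frac12\frac{d}{dt}\|\bar u_\Gamma\|_*^2=\langle\partial_t\bar u_\Gamma,\bar y_\Gamma\rangle$ holds; moreover testing \eqref{e6} (in the $\boldsymbol{V}$-form, equivalent to \eqref{e6weak} since $\Delta\bar\mu=0$ by \eqref{e1}) with $(\bar y,\bar y_\Gamma)$ and testing \eqref{aux} with $(\bar\mu,\bar\mu_\Gamma)\in\boldsymbol{V}$, then comparing, gives $\langle\partial_t\bar u_\Gamma,\bar y_\Gamma\rangle=-\langle\bar u_\Gamma,\bar\mu_\Gamma\rangle$. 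Hence $-\langle\bar\mu_\Gamma,\bar u_\Gamma\rangle=\frac12\frac{d}{dt}\|\bar u_\Gamma\|_*^2$.

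Then I would test the bulk equation with $\bar u$, integrate by parts via \eqref{normal}, and replace $\partial_{\boldsymbol{\nu}}\bar u$ using the boundary equation \eqref{e7}; combined with the previous identity this yields
\begin{align*}
& \frac{1}{2}\frac{d}{dt}\|\bar u\|_H^2+\frac{1}{2}\frac{d}{dt}\|\bar u_\Gamma\|_*^2+\|\nabla\bar u\|_H^2+\langle\bar\xi_\Gamma,\bar u_\Gamma\rangle_{Z_\Gamma',Z_\Gamma}+\int_\Omega\bar\xi\,\bar u\dx \\
& =\langle\bar f,\bar u\rangle_{V',V}+\int_\Gamma\bar f_\Gamma\bar u_\Gamma\dg-\int_\Gamma\bigl(\pi_\Gamma(u_\Gamma^{(1)})-\pi_\Gamma(u_\Gamma^{(2)})\bigr)\bar u_\Gamma\dg-\int_\Omega\bigl(\pi(u^{(1)})-\pi(u^{(2)})\bigr)\bar u\dx.
\end{align*}
The two monotonicity contributions $\langle\bar\xi_\Gamma,\bar u_\Gamma\rangle\ge0$ and $\int_\Omega\bar\xi\,\bar u\ge0$ are dropped. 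On the right, $\langle\bar f,\bar u\rangle\le\delta\|\bar u\|_V^2+C_\delta\|\bar f\|_{V'}^2$; the $\pi$-term is bounded by $L\|\bar u\|_H^2$ via (A3); and the $\pi_\Gamma$-term and $\int_\Gamma\bar f_\Gamma\bar u_\Gamma\le\|\bar f_\Gamma\|_{H_\Gamma}\|\bar u_\Gamma\|_{H_\Gamma}$ are absorbed through the trace--interpolation inequality \eqref{comp2}, $\|\bar u_\Gamma\|_{H_\Gamma}^2\le\delta\|\bar u\|_V^2+C_\delta\|\bar u\|_H^2$, into $\delta\|\bar u\|_V^2+C_\delta\|\bar u\|_H^2+C_\delta\|\bar f_\Gamma\|_{H_\Gamma}^2$. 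Choosing $\delta$ small to absorb $\delta\|\nabla\bar u\|_H^2$ on the left, integrating over $(0,t)$, and using $\|\bar u_{0\Gamma}\|_*\le C\|\bar u_{0\Gamma}\|_{V_\Gamma'}$, I would close the estimate with the Gronwall lemma, controlling $\|\bar u(t)\|_H^2+\|\bar u_\Gamma(t)\|_*^2+\int_0^t\|\nabla\bar u\|_H^2\ds$ by the data. Recovering the full $V$-norm by adding the $\|\bar u\|_H$ bound, and converting $\|\bar u_\Gamma\|_*$ back to $\|\bar u_\Gamma\|_{V_\Gamma'}$ through the uniform equivalence, yields the asserted inequality.

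The main obstacle is handling the boundary term $-\langle\bar\mu_\Gamma,\bar u_\Gamma\rangle$ uniformly in $\varepsilon$: one cannot test directly with $\bar u_\Gamma$, so the inverse norm $\|\cdot\|_*$ must be introduced, and the crux is to verify both that this $\varepsilon$-dependent norm is equivalent to $\|\cdot\|_{V_\Gamma'}$ with $\varepsilon$-independent constants (this is where $\varepsilon\le1$ and $Z_\Gamma'\hookrightarrow V_\Gamma'$ enter) and that the time-derivative identity for $\|\bar u_\Gamma\|_*^2$ holds. Everything else is a routine energy argument closed by Gronwall, with monotonicity of $\beta,\beta_\Gamma$ and Lipschitz continuity of $\pi,\pi_\Gamma$ doing the remaining work.
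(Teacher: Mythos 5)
Your proposal is correct and follows essentially the same route as the paper: the auxiliary zero-mean elliptic problem defining the $\varepsilon$-weighted inverse norm, the identity $-\langle\bar\mu_\Gamma,\bar u_\Gamma\rangle=\tfrac12\tfrac{d}{dt}\|\bar u_\Gamma\|_*^2$, the $\varepsilon$-uniform comparison with $\|\cdot\|_{V_\Gamma'}$ via the recovery operator, and the closing energy/Gronwall argument with \eqref{comp2} all match the paper's proof of Theorem~\ref{thm1b}. The only cosmetic difference is that you state the two-sided norm equivalence at every time, whereas the paper uses the lower bound for general $t$ and the upper bound only at $t=0$; both suffice.
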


\begin{proof}
Taking the difference of the \pier{equations~\eqref{main1-3} and choosing}
$(z,z_\Gamma):=(\bar{u}, \bar{u}_\Gamma)$ we have  
\begin{align} 
	& \frac{1}{2} \frac{d}{dt}
	\int_\Omega |\bar{u}|^2 \dx 
	+ 
	\int_\Omega |\nabla \bar{u}|^2  \dx 
	+ 
	\int_\Omega \bar{\xi} \bar{u} \dx 
	+
	\langle \bar{\xi}_\Gamma, \bar{u}_\Gamma \rangle_{Z_\Gamma',Z_\Gamma}
	- \int_\Gamma \bar{\mu}_\Gamma  \bar{u}_\Gamma \dg
	\notag \\
	\quad {}
	& = 
	- 
	\int_\Omega
	\bigl( 
	\pi(u^{(1)})-\pi(u^{(2)}) \bigr) \bar{u} \dx 
	-
	\int_\Gamma 
	\bigl( \pi_\Gamma(u^{(1)}_\Gamma) - \pi_\Gamma \pier{(u_\Gamma^{(2)})} \bigr) \bar{u}_\Gamma \dg
	\notag \\
	& \quad {}
	+ 
	\int_\Omega \bar{f} \bar{u} \dx 
	+
	\int_\Gamma 
	\bar{f}_\Gamma \bar{u}_\Gamma \dg
	\label{eqforconti}
\end{align}
a.e.\ in $(0,T)$. 
Here, in the same way of the proof of Lemma~\ref{L4}, 
we define $(\bar{y}, \bar{y}_{\Gamma}) \in H^1(0,T;\boldsymbol{V})$ as the solution to 
\begin{align}
	\varepsilon \int_\Omega \nabla \bar{y}(t) \cdot \nabla z \dx 
	+ \int_\Gamma \nabla _\Gamma \bar{y}_{\Gamma}(t) \cdot \nabla _\Gamma z_\Gamma \dg
	= 
	\bigl\langle \bar{u}_{\Gamma}(t), z_\Gamma \bigr\rangle _{V_\Gamma',V_\Gamma}\nonumber\\
	\hbox{for all $t \in [0,T]$ and $(z,z_\Gamma) \in \boldsymbol{V}$}	\label{subeq2}
\end{align}
that satisfies $\int_\Gamma \bar{y}_{\Gamma} d\Gamma=0$.
Of course, it is important that 
\begin{equation} 
	\langle \bar{u}_\Gamma, 1 \rangle_{V_\Gamma',V_\Gamma}
	= 0 \quad {\rm in~} (0,T),
	\label{pier16-1}
\end{equation}
and this condition is ensured from \pier{(cf. \eqref{e6weak} and \eqref{e1})}
\begin{equation*} 
	\langle \bar{u}_{0\Gamma}, 1 \rangle_{V_\Gamma',V_\Gamma}
	= 0.
\end{equation*}
Taking $(z,z_\Gamma)=(\bar{\mu}, \bar{\mu}_\Gamma)$ in \eqref{subeq2}, and 
 $(z,z_\Gamma)=(\bar{y}, \bar{y}_\Gamma)$ in 
the difference between \pier{the equalities~\eqref{e6weak} written for $u^{(1)}$ and $u^{(2)}$, we easily compare and, with the help of~\eqref{e1}, deduce that}
\begin{equation}
	\langle \bar{u}_\Gamma, \bar{\mu}_\Gamma \rangle_{V_\Gamma', V_\Gamma} = 
	- \langle \partial_t \bar{u}_{\Gamma},\bar{y}_\Gamma \rangle_{V_\Gamma',V_\Gamma}.
	\label{eqforconti2}
\end{equation}
Moreover, differentiating \eqref{subeq2} with respect to time, then taking 
$(z,z_\Gamma)=(\bar{y}, \bar{y}_\Gamma)$, we have from integration of the resultant that 
\begin{gather}
	\frac{\varepsilon}{2} \bigl\| \nabla \bar{y}(t) \bigr\|_H^2 
	 + \frac{1}{2} \bigl\| \nabla_\Gamma \bar{y}_\Gamma (t) \bigr\|_{H_\Gamma}^2  
	 - \frac{\varepsilon}{2} \| \nabla \bar{y}_0 \|_H^2 
	 - \frac{1}{2} \| \nabla_\Gamma \bar{y}_{0\Gamma}\|_{H_\Gamma}^2 
	 = \int_0^t \langle \partial_t \bar{u}_\Gamma, \bar{y}_\Gamma \rangle_{V_\Gamma',V_\Gamma} \ds \label{base}
\end{gather}
for all $t \in [0,T]$. \pier{On the other hand, from~\eqref{subeq2} it follows that} 
\begin{align*}
	\bigl\| \bar{u}_\Gamma (t) \bigr\|_{V_\Gamma'} 
	& = \sup_{
	\substack{z_\Gamma \in V_\Gamma \\ \|z_\Gamma\|_{V_\Gamma} \le 1}}
	\bigl| \langle \bar{u}_\Gamma(t), z_\Gamma \rangle_{V_\Gamma',V_\Gamma} \bigr|
	\notag \\
	& = \sup_{
	\substack{z_\Gamma \in V_\Gamma \\ \|z_\Gamma \|_{V_\Gamma} \le 1}}
	\left| \,
	\varepsilon \!\int_\Omega \nabla \bar{y}(t) \cdot \nabla 
	{\mathcal R}\pier{z_\Gamma }\dx 
	+ \int_\Gamma \nabla _\Gamma \bar{y}_{\Gamma}(t) \cdot \nabla _\Gamma z_\Gamma \dg
	\right|
	\notag \\
	& \le \sup_{
	\substack{z_\Gamma \in V_\Gamma \\ \|z_\Gamma\|_{V_\Gamma} \le 1}}
	\left\{ 
	\pier{C}\,\varepsilon  \bigl\| \nabla \bar{y}(t) \bigl\|_H 
	\|z_\Gamma\|_{V_\Gamma} 
	+ \bigl\| \nabla _\Gamma \bar{y}_{\Gamma}(t) \bigr\|_{H_\Gamma} 
	\|z_\Gamma\|_{V_\Gamma}
	\right\},
\end{align*}
that is, there exists a \pier{constant $c>0$, independent of $\varepsilon \in (0,1]$,} such that 
\begin{equation}
	\frac{\varepsilon}{2} \bigl\| \nabla \bar{y}(t) \bigr\|_H^2 
	 + \frac{1}{2} \bigl\| \nabla_\Gamma \bar{y}_\Gamma (t) \bigr\|_{H_\Gamma}^2 
	 \ge \pier{c} \bigl\| \bar{u}_\Gamma (t) \bigr\|_{V_\Gamma'}^2
	 \label{eqc}
\end{equation}
for all $t \in [0,T]$. Next, \pier{by considering \eqref{subeq2} at $t=0$, we have that}
\begin{align*}
	\varepsilon \| \nabla \bar{y}_0 \|_H^2 
	+  \| \nabla_\Gamma \bar{y}_{0\Gamma}\|_{H_\Gamma}^2 
	& = 
	\langle \bar{u}_{0\Gamma}, \bar{y}_{0\Gamma} \rangle _{V_\Gamma',V_\Gamma} \\
	& \le \frac{\delta}{2} \left( 
	2\| \bar{y}_{0\Gamma}  \|_{H_\Gamma}^2 + 2\| \nabla _\Gamma \bar{y}_{0\Gamma}  \|_{H_\Gamma}^2 \right)
	+ \frac{1}{2\delta }\|\bar{u}_{0\Gamma}\|_{V_\Gamma'}^2 
	\notag \\
	& \le \delta(\pier{C_{\rm P}+1}) 
	\| \nabla \bar{y}_{0\Gamma}  \|_{H_\Gamma}^2
	+ \frac{1}{2\delta }\|\bar{u}_{0\Gamma}\|_{V_\Gamma'}^2 \pier{,}
\end{align*}
where we \pier{have used~\eqref{Poin2} along with the condition} $\int_\Gamma  \bar{y}_{0\Gamma} \dg =0$. Thus, 
choosing $\delta=1/2(\pier{C_{\rm P}+1})$, we see that there exists a positive constant $C>0$ such that
\begin{equation}
	\frac{\varepsilon}{2} \| \nabla \bar{y}_0\|_H^2 
	 + \frac{1}{2} \| \nabla_\Gamma \bar{y}_{0\Gamma} \|_{H_\Gamma}^2 
	 \le C \| \bar{u}_{0\Gamma} \bigr\|_{V_\Gamma'}^2. 
	 \label{eqC}
\end{equation}
Now, we go back to \eqref{eqforconti} \pier{and add $\|\bar{u}\|_H^2$ to both sides. Then,
in the light of \eqref{eqforconti2}, \eqref{eqc}, \eqref{eqC} and by integrating with respect to time,} the above equation~\eqref{base} allows us to infer that 
\begin{align*} 
	& \frac{1}{2} 
	\bigl\| \bar{u}(t) \bigr\|_H^2
	+ \frac{1}{2}\int_0^t \|\bar{u}\|_V^2 \ds
	+ 
	\pier{c} \bigl\| \bar{u}_\Gamma (t) \bigr\|_{V_\Gamma'}^2
	\notag \\
	\quad {}
	& \le \frac{1}{2} 
	\| \bar{u}_0 \|_{H}^2
	+
	C \| \bar{u}_{0\Gamma} \|_{V_\Gamma'}^2
	+
	(1+L)
	\int_0^t 
	\| 
	 \bar{u} \|_H^2 \ds 
	+\left(L_\Gamma+\frac{1}{2} \right)
	\int_0^t 
	\|
	\bar{u}_\Gamma
	\|_{H_\Gamma}^2 \ds
	\notag \\
	& \quad {}
	+ \frac{1}{2}
	\int_0^t \|\bar{f}\|_{V'}^2 \ds 
	+\frac{1}{2}
	\int_0^t 
	\| \bar{f}_\Gamma\|_{H_\Gamma}^2 \ds
\end{align*}
for all $t \in [0,T]$, where \pier{the monotonicity of $\beta$ and $\beta_\Gamma$ has been taken into account.} 
Additionally, we can use the compactness inequality \eqref{comp2} to 
control the term \pier{involving} $\|\bar{u}_\Gamma\|_{H_\Gamma}^2$. Thus, \pier{with the help of the Gronwall lemma} we easily arrive at the conclusion.
\end{proof}

\subsection{Continuous dependence for $({\rm P})_{\kappa}$}

Throughout this subsection, we denote by 
\pier{$(u^{(i)}, \xi^{(i)}, u_\Gamma^{(i)}, \mu_\Gamma^{(i)}, \xi_\Gamma^{(i)})$}, $i=1,2$, two \pier{solutions of Problem~$({\rm P})_{\kappa}$ corresponding to the respective data $\{u_{0}^{(i)}, u_{0\Gamma}^{(i)}, f^{(i)}, f_\Gamma^{(i)}\}$,
$i=1,2$. The data are supposed to satify the assumptions~{\rm (A4)} and {\rm (A5)}, together with \eqref{pier16}.
Put $\bar{u}=u^{(1)}-u^{(2)}$, 
and adopt the same notation with the bar for the differences of other functions. 
Then, we can show the following result.}
\begin{theorem}\label{thm2b}
There exists a positive constant \pier{$C_\kappa$, which depends on $\kappa \in (0,1]$,} such~that
\begin{align*}
	& \|\bar{u}\|_{C([0,T];H)} + \|\bar{u}\|_{L^2(0,T;V)}+
	\|\bar{u}_\Gamma\|_{C([0,T];V_\Gamma')} +\|\bar{u}_\Gamma\|_{L^2(0,T;V_\Gamma)}\\
	& \le C_\kappa\left( \|\bar{u}_0\|_H + \|\bar{u}_{0\Gamma}\|_{V_\Gamma'} + \|\bar{f}\|_{L^2(0,T;V')} + \|\bar{f}_\Gamma\|_{L^2(0,T;V_\Gamma')}\right).
\end{align*}
\end{theorem}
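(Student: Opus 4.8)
The plan is to mirror the argument of Theorem~\ref{thm1b}, adapting it to account for the surface diffusion term $\kappa\Delta_\Gamma u_\Gamma$ and for the weaker regularity $\bar f_\Gamma\in L^2(0,T;V_\Gamma')$. First I would derive, for each solution, the combined weak formulation obtained by testing \eqref{e3} with $z$, integrating by parts, and inserting \eqref{k7} for $\partial_{\boldsymbol{\nu}}u$; since here $u\in L^2(0,T;W)$ and $\xi_\Gamma\in L^2(0,T;H_\Gamma)$, this yields for all $(z,z_\Gamma)\in\boldsymbol V$ an identity of the same shape as \eqref{ee1}, carrying the boundary diffusion term $\kappa\int_\Gamma\nabla_\Gamma u_\Gamma\cdot\nabla_\Gamma z_\Gamma\dg$. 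Taking the difference of the two formulations and choosing $(z,z_\Gamma)=(\bar u,\bar u_\Gamma)$ produces
\begin{align*}
&\frac12\frac{d}{dt}\|\bar u\|_H^2 + \|\nabla\bar u\|_H^2 + \kappa\|\nabla_\Gamma\bar u_\Gamma\|_{H_\Gamma}^2 + \int_\Omega\bar\xi\,\bar u\dx + \int_\Gamma\bar\xi_\Gamma\,\bar u_\Gamma\dg - \int_\Gamma\bar\mu_\Gamma\bar u_\Gamma\dg \\
&= -\int_\Omega\bigl(\pi(u^{(1)})-\pi(u^{(2)})\bigr)\bar u\dx - \int_\Gamma\bigl(\pi_\Gamma(u_\Gamma^{(1)})-\pi_\Gamma(u_\Gamma^{(2)})\bigr)\bar u_\Gamma\dg + \int_\Omega\bar f\,\bar u\dx + \langle\bar f_\Gamma,\bar u_\Gamma\rangle_{V_\Gamma',V_\Gamma},
\end{align*}
where the monotonicity of $\beta$ and $\beta_\Gamma$ renders the terms $\int_\Omega\bar\xi\,\bar u$ and $\int_\Gamma\bar\xi_\Gamma\,\bar u_\Gamma$ nonnegative, hence harmless.

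Next, exactly as in the proof of Lemma~\ref{L4} and Theorem~\ref{thm1b}, I would dispose of the only delicate term $-\int_\Gamma\bar\mu_\Gamma\bar u_\Gamma\dg$ through a purely boundary auxiliary problem: since now there is no bulk chemical potential, I define $\bar y_\Gamma(t)\in V_\Gamma$ with $\int_\Gamma\bar y_\Gamma\dg=0$ by $\int_\Gamma\nabla_\Gamma\bar y_\Gamma(t)\cdot\nabla_\Gamma z_\Gamma\dg=\langle\bar u_\Gamma(t),z_\Gamma\rangle_{V_\Gamma',V_\Gamma}$ for all $z_\Gamma\in V_\Gamma$. Choosing the constant test function in \eqref{main2-1} shows that the boundary mean of $u_\Gamma$ is conserved, so assumption \eqref{pier16} guarantees $\langle\bar u_\Gamma,1\rangle=0$ and the auxiliary problem is solvable. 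Testing the difference of \eqref{main2-1} with $\bar y_\Gamma$ and the auxiliary problem with $\bar\mu_\Gamma$ gives $\int_\Gamma\bar\mu_\Gamma\bar u_\Gamma\dg=-\langle\partial_t\bar u_\Gamma,\bar y_\Gamma\rangle_{V_\Gamma',V_\Gamma}$, and differentiating the auxiliary problem in time and testing again with $\bar y_\Gamma$ turns this into $-\frac12\frac{d}{dt}\|\nabla_\Gamma\bar y_\Gamma\|_{H_\Gamma}^2$. The norm equivalences $\|\bar u_\Gamma(t)\|_{V_\Gamma'}\le\|\nabla_\Gamma\bar y_\Gamma(t)\|_{H_\Gamma}$ and $\|\nabla_\Gamma\bar y_{0\Gamma}\|_{H_\Gamma}^2\le C\|\bar u_{0\Gamma}\|_{V_\Gamma'}^2$ then follow verbatim from \eqref{eqc} and \eqref{eqC} with $\varepsilon=0$.

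Finally I would integrate in time and estimate the right-hand side. The Lipschitz contributions of $\pi,\pi_\Gamma$ are bounded by $L\|\bar u\|_H^2$ and $L_\Gamma\|\bar u_\Gamma\|_{H_\Gamma}^2$, the latter controlled by the trace-compactness inequality \eqref{comp2}, so that a small multiple of $\|\bar u\|_V^2$ is absorbed into $\|\nabla\bar u\|_H^2$ while the remainder is left for Gronwall; the term $\int_\Omega\bar f\,\bar u$ is handled by $\|\bar f\|_{V'}\|\bar u\|_V$ and Young's inequality. The genuinely new step -- and the one dictating the $\kappa$-dependence of the constant -- is the boundary forcing $\langle\bar f_\Gamma,\bar u_\Gamma\rangle_{V_\Gamma',V_\Gamma}\le\|\bar f_\Gamma\|_{V_\Gamma'}\|\bar u_\Gamma\|_{V_\Gamma}$: using the zero-mean property together with the Poincar\'e--Wirtinger inequality \eqref{Poin2} to upgrade $\kappa\|\nabla_\Gamma\bar u_\Gamma\|_{H_\Gamma}^2$ into a coercive multiple $\kappa\,c\|\bar u_\Gamma\|_{V_\Gamma}^2$, I absorb half of the forcing and pay $C_\kappa\|\bar f_\Gamma\|_{V_\Gamma'}^2$ on the right. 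Collecting the coercive quantities $\|\bar u(t)\|_H^2$, $\|\bar u\|_{L^2(0,T;V)}^2$, $\|\bar u_\Gamma(t)\|_{V_\Gamma'}^2$ and $\kappa\|\bar u_\Gamma\|_{L^2(0,T;V_\Gamma)}^2$ on the left and invoking the Gronwall lemma yields the asserted estimate. The main obstacle is precisely this last absorption: the only coercivity available on the boundary is the $\kappa$-weighted $V_\Gamma$ seminorm, and dominating a $V_\Gamma'$-forcing with it is what forces the division by $\kappa$, explaining why the constant must degenerate as $\kappa\to0$ and hence depends on $\kappa\in(0,1]$.
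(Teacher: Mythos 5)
Your proposal is correct and follows essentially the same route as the paper: the same combined weak formulation tested with $(\bar u,\bar u_\Gamma)$, the same conversion of $\int_\Gamma\bar\mu_\Gamma\bar u_\Gamma\dg$ into $-\tfrac12\tfrac{d}{dt}$ of a dual norm, the same Young absorption with $\delta\sim\kappa$ against the $\kappa$-weighted boundary coercivity (which is exactly where $C_\kappa$ degenerates), and the same use of \eqref{comp2} plus Gronwall. The only difference is presentational: you realize the $V_\Gamma'$-type norm through the auxiliary elliptic problem $\bar y_\Gamma$ (the Theorem~\ref{thm1b} device with $\varepsilon=0$), whereas the paper introduces the operator $F_\Gamma^{-1}$ on $V_{\Gamma,0}$ and the induced inner product on $V_{\Gamma,0}'$ — these are the same object, since $\bar y_\Gamma=F_\Gamma^{-1}\bar u_\Gamma$ and $\|\nabla_\Gamma\bar y_\Gamma\|_{H_\Gamma}^2=\|\bar u_\Gamma\|_{V_{\Gamma,0}'}^2$.
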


\begin{proof}
\pier{We take the differences of the equations in \eqref{e3} and \eqref{k7}. Then, using a pair 
$(z,z_\Gamma)\in \boldsymbol{V}$ as test function, it is not difficult to derive the variational equality 
\begin{align}
	&\int_\Omega \partial_t \bar{u} z \dx 
	+ 
	\int_\Omega \nabla \bar{u} \cdot \nabla z\dx 
	+ 
	\kappa\int_\Gamma \nabla_\Gamma \bar{u}_\Gamma \cdot \nabla_\Gamma z_\Gamma\d\Gamma
	\notag \\
	&{}+ 
	\int_\Omega \bigl( \bar\xi + \pi(u^{(1)})-\pi(u^{(2)})\bigr) z \dx 
	+
	\int_\Gamma \bigl(\bar\xi_\Gamma + 
	\pi_\Gamma(u^{(1)}_\Gamma) - \pi_\Gamma (u^{(2)_\Gamma})  \bigr) z_\Gamma \dg
	\notag \\
	&{}
	= 
	\int_\Omega \bar f  z \dx 
	+
	\int_\Gamma ( 
	\bar f_\Gamma + \bar\mu_\Gamma )z_\Gamma \dg
	\quad 
	\text{for all } (z,z_\Gamma) \in \boldsymbol{V}, 
	\text{ a.e.\ in } (0,T).
	\label{pier17}
\end{align}
Then, choosing $(z,z_\Gamma):=(\bar{u}, \bar{u}_\Gamma)$ and using the Lipschitz continuity of $\pi$ and $\pi_\gamma$ lead to
\begin{align}
	& \frac{1}{2}\frac{d}{dt} \int_\Omega |\bar{u}|^2 \dx
	+ \int_\Omega |\nabla \bar{u}|^2 \dx
	+\kappa \int_\Gamma |\nabla_\Gamma \bar{u}_\Gamma|^2 \dg 
	\notag \\
	& +{}
	\int_\Omega \bar{\xi} \bar{u} \dx 
	+ 
	\int_\Gamma \bar{\xi}_\Gamma \bar{u}_\Gamma \dg	
	-\int _\Gamma \bar{\mu}_\Gamma \bar{u}_\Gamma \dg 
	\notag \\
	& \leq 
	L
	\| \bar{u} \|^2_H
	+ 
	L_\Gamma 
	\| \bar{u}_\Gamma \|_{H_\Gamma} ^2 +
	 \|\bar{f}\|_{V'} \| \bar{u} \|_V
	+ 
	\| \bar{f}_\Gamma\|_{V_\Gamma'} \| \bar{u}_\Gamma \|_{V_\Gamma},
	\label{base2}
\end{align}
a.e.\ in $(0,T)$. Next, we define $V_{\Gamma,0}:=\{ z _\Gamma \in V_\Gamma : \int _\Gamma z_\Gamma \dg =0 \}$ and consider the linear operator $F_\Gamma :V_{\Gamma,0} \to V_{\Gamma,0}'$ specified by 
\begin{equation*}
	\langle F_\Gamma z_\Gamma, \tilde{z}_\Gamma \rangle_{V_{\Gamma,0}',V_{\Gamma,0}}
	:=\int_\Gamma \nabla _\Gamma z_\Gamma \cdot \nabla _\Gamma \tilde{z}_\Gamma \dg
	\quad {\rm for~all~} z_\Gamma, \tilde{z}_\Gamma \in V_{\Gamma,0}.
\end{equation*}
Hence, from the Poincar\'e inequality we see that there exists a 
positive constant $c_{\rm P}>0$ such that 
\begin{equation}
	c_{\rm  P} \|z_\Gamma \|_{V_\Gamma}^ 2 \le \langle F_\Gamma z_\Gamma, z_\Gamma \rangle_{V_{\Gamma,0}',V_{\Gamma,0}} =: \|z_\Gamma \|_{V_{\Gamma,0}}^2
	\quad {\rm for~all~} z_\Gamma \in V_{\Gamma,0}.
	\label{pier18} 
\end{equation}
Thanks to the fact that $\|z_\Gamma\|_{V_{\Gamma,0}} \le \|z_\Gamma\|_{V_\Gamma}$
for all $z_\Gamma \in V_\Gamma$, we see that $\|\cdot \|_{V_{\Gamma}}$ and 
$\|\cdot \|_{V_{\Gamma,0}}$ are equivalent norms on $V_{\Gamma,0}$ and 
then $F_\Gamma : V_{\Gamma,0} \to V_{\Gamma,0}'$ is a duality mapping. 
Moreover, since the kernel $\ker (F_\Gamma) $ contains only the null function, it turns out that  $F_\Gamma^{-1}:R(F_\Gamma)=V_{\Gamma,0}' \to V_{\Gamma,0}$ is linear continuous.} 
Additionally, we can define the inner product in $V_{\Gamma,0}'$ by
\begin{equation*}
	(z_\Gamma^*, \tilde{z}_\Gamma^*)_{V_{\Gamma,0}'} := \langle z_\Gamma^*, F^{-1}_\Gamma \tilde{z}_{\Gamma}^* \rangle_{V_{\Gamma,0}', V_{\Gamma,0}}
	\quad {\rm for~all~} z_\Gamma^*, \tilde{z}_\Gamma^* \in V_{\Gamma,0}'.
\end{equation*}
\pier{Now, taking the difference of the two equations~\eqref{main2-1} we obtain
\begin{equation*}
	\langle \partial_t \bar{u}_{\Gamma},z_\Gamma \rangle_{V_\Gamma',V_\Gamma}
	+ \int_\Gamma \nabla _\Gamma \bar{\mu}_{\Gamma} \cdot \nabla_\Gamma z_\Gamma \dg
	=0 \quad 
	\text{ for all } z_\Gamma \in V_\Gamma, 
	\text{ a.e.\ in } (0,T).
\end{equation*}
and here we are allowed to choose $z_\Gamma =F_\Gamma^{-1} \bar{u}_\Gamma$ (cf.~\eqref{pier16} and \eqref{pier16-1}). Then, using this in \eqref{base2}, and   
adding $\|\bar{u}\|_H^2$ to both sides of \eqref{base2}, the integration of the resultant over $[0,t]$ yields}
\begin{align}
	& \frac{1}{2} \bigl\| \bar{u}(t) \bigr\|_{H}^2 
	+ \frac{1}{2} \bigl\| \bar{u}_\Gamma(t) \bigr\|_{V_{\Gamma,0}'}^2
	+ \frac{1}{2}\int_0^t \|\bar{u}\|_V^2 \ds
 	+\kappa \int_0^t \|\nabla_\Gamma \bar{u}_\Gamma\|_{H_\Gamma}^2 \ds
	\notag \\
	& \le \frac{1}{2} \|\bar{u}_0\|_{H}^2
	+ \frac{1}{2} \|\bar{u}_{0\Gamma}\|_{V_{\Gamma,0}'}^2 +
	(1+L) 
	\int_0^t
	\| \bar{u} \|^2_H \ds
	+ 
	L_\Gamma 
	\int_0^t
	\| \bar{u}_\Gamma \|_{H_\Gamma} ^2 \ds
	\notag \\
	& \quad {}
	+ \frac{1}{2} \int_0^t \|\bar{f}\|_{V'}^2 \ds 
	+ 
	\frac{1}{4\delta}
	\int_0^t 
	\| \bar{f}_\Gamma\|_{V_\Gamma'}^2 \ds + 
	\delta \int_0^t \| \bar{u}_\Gamma \|_{V_\Gamma}^2 \ds 
	\label{sim} 
\end{align}
for all $t \in [0,T]$. \pier{Therefore, by virtue of \eqref{pier18} we may take 
$\delta :=(c_{\rm P}\kappa)/2$ and thus gain the contribution of the last term in the right-hand side of \eqref{sim}. At this point, we can conclude as in the proof of Theorem~\ref{thm1b},
by the compactness inequality~\eqref{comp2} and the Gronwall lemma.}
\end{proof}

\subsection{Continuous dependence for $({\rm P})$}
\pier{As before,  
we let 
$(u^{(i)}, \xi^{(i)}, u_\Gamma^{(i)}, \mu_\Gamma^{(i)}, \xi_\Gamma^{(i)})$, $i=1,2$, be two  solutions of the problem~$({\rm P})$ corresponding to the data 
$\{u_{0}^{(i)}, u_{0\Gamma}^{(i)}, f^{(i)}, f_\Gamma^{(i)}\}$,
$i=1,2$, respectively. The data are assumed to fulfill the assumptions~{\rm (A4)}, {\rm (A5)} and condition \eqref{pier16}. We use the notation $\bar{u}=u^{(1)}-u^{(2)}$ and similarly  
for the differences of other functions. 
Here, we have the following result.}

\begin{theorem}\label{thm3b}
There exists a positive constant $C>0$ such that
\begin{align*}
	& \|\bar{u}\|_{C([0,T];H)} + \|\bar{u}\|_{L^2(0,T;V)}
	+\|\bar{u}_\Gamma\|_{C([0,T];V_\Gamma')} 
	\\
	& \le C\left( \|\bar{u}_0\|_H + \|\bar{u}_{0\Gamma}\|_{V_\Gamma'} + \|\bar{f}\|_{L^2(0,T;V')} 
	+ \|\bar{f}_\Gamma\|_{L^2(0,T;H_\Gamma)}\right).
\end{align*}
\end{theorem}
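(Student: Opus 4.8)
The plan is to merge the arguments of Theorems~\ref{thm1b} and~\ref{thm2b}, now specialized to the degenerate case $\varepsilon=\kappa=0$. First I would subtract the combined weak identity of the form~\eqref{main1-3}, written for the two solutions, and test the difference with $(z,z_\Gamma)=(\bar u,\bar u_\Gamma)\in\boldsymbol{Z}$. This produces an energy identity whose left-hand side carries $\frac12\frac{d}{dt}\|\bar u\|_H^2+\|\nabla\bar u\|_H^2$ together with the two monotone contributions $\int_\Omega\bar\xi\,\bar u\dx$ and $\langle\bar\xi_\Gamma,\bar u_\Gamma\rangle_{Z_\Gamma',Z_\Gamma}$; the former is nonnegative by monotonicity of $\beta$ (since $\xi^{(i)}\in\beta(u^{(i)})$), the latter by subtracting the two variational inequalities in~\eqref{main3-5}, so both may be discarded. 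The right-hand side then consists of the Lipschitz remainders in $\pi,\pi_\Gamma$, the forcing terms $\int_\Omega\bar f\,\bar u\dx+\int_\Gamma\bar f_\Gamma\bar u_\Gamma\dg$, and the crucial coupling term $\int_\Gamma\bar\mu_\Gamma\bar u_\Gamma\dg$.

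The heart of the proof is the treatment of $\int_\Gamma\bar\mu_\Gamma\bar u_\Gamma\dg$, handled exactly as in Theorem~\ref{thm2b}, since the boundary dynamics~\eqref{main3-2} here coincide with~\eqref{main2-1}. I would reuse the operator $F_\Gamma:V_{\Gamma,0}\to V_{\Gamma,0}'$ and the dual structure on $V_{\Gamma,0}'$ introduced there (see~\eqref{pier18}). The conservation property $\int_\Gamma\bar u_\Gamma\dg=0$, obtained by testing the difference of~\eqref{main3-2} with the constant $1$ and invoking~\eqref{pier16} (as in~\eqref{pier16-1}), ensures that $F_\Gamma^{-1}\bar u_\Gamma$ is well defined. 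Choosing $z_\Gamma=F_\Gamma^{-1}\bar u_\Gamma$ in the difference of~\eqref{main3-2} and exploiting the symmetry of the Laplace--Beltrami bilinear form gives
\begin{equation*}
	\int_\Gamma\bar\mu_\Gamma\bar u_\Gamma\dg=-\frac12\frac{d}{dt}\bigl\|\bar u_\Gamma\bigr\|_{V_{\Gamma,0}'}^2,
\end{equation*}
so that, once moved to the left, this term becomes a perfect time derivative.

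It remains to bound the surviving right-hand side, which I would do as in Theorem~\ref{thm1b}: estimate $\int_\Omega\bar f\,\bar u\dx\le\|\bar f\|_{V'}\|\bar u\|_V$ and, decisively, $\int_\Gamma\bar f_\Gamma\bar u_\Gamma\dg\le\|\bar f_\Gamma\|_{H_\Gamma}\|\bar u_\Gamma\|_{H_\Gamma}$, after which $\|\bar u_\Gamma\|_{H_\Gamma}^2=\|\gamma_0\bar u\|_{H_\Gamma}^2$ and the Lipschitz boundary term are absorbed through the compactness inequality~\eqref{comp2} in the form $\delta\|\bar u\|_V^2+C_\delta\|\bar u\|_H^2$. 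Adding $\|\bar u\|_H^2$ to both sides to reconstruct $\|\bar u\|_V^2$ on the left, integrating over $[0,t]$, choosing $\delta$ small, and applying the Gronwall lemma then yields the asserted estimate.

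The main obstacle, and the one genuine structural difference from Theorem~\ref{thm2b}, is that the surface diffusion term $\kappa\int_\Gamma|\nabla_\Gamma\bar u_\Gamma|^2\dg$ is no longer present and hence cannot be used to absorb boundary contributions. This forces the boundary datum into the stronger class $\bar f_\Gamma\in L^2(0,T;H_\Gamma)$ (as in Theorem~\ref{thm1b}) and requires every boundary term to be routed through the trace inequality~\eqref{comp2}. Because no $\kappa$-dependent quantity enters the estimates, the final constant $C$ is genuinely independent of the parameters.
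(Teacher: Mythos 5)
Your proposal is correct and follows essentially the same route as the paper: the paper likewise proceeds as in Theorem~\ref{thm2b}, uses $F_\Gamma^{-1}\bar u_\Gamma$ to turn $\int_\Gamma\bar\mu_\Gamma\bar u_\Gamma\dg$ into $-\frac12\frac{d}{dt}\|\bar u_\Gamma\|_{V_{\Gamma,0}'}^2$, and identifies the same key change, namely bounding $(\bar f_\Gamma,\bar u_\Gamma)_{H_\Gamma}$ by $\|\bar f_\Gamma\|_{H_\Gamma}\|\bar u_\Gamma\|_{H_\Gamma}$ and absorbing $\|\bar u_\Gamma\|_{H_\Gamma}^2$ via \eqref{comp2} before applying Gronwall. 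Your extra care in handling $\langle\bar\xi_\Gamma,\bar u_\Gamma\rangle_{Z_\Gamma',Z_\Gamma}$ through the two variational inequalities in \eqref{main3-5} is a welcome (and needed) precision that the paper leaves implicit.
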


\begin{proof}
We proceed exactly as in the proof of Theorem~\ref{thm2b}, arriving at an inequality 
very similar to~\eqref{sim} but without the term $\kappa \|\nabla _\Gamma \bar{u}_\Gamma\|_{H_\Gamma}^2$, that is,
\begin{align}
	& \frac{1}{2} \bigl\| \bar{u}(t) \bigr\|_{H}^2 
	+ \frac{1}{2} \bigl\| \bar{u}_\Gamma(t) \bigr\|_{V_{\Gamma,0}'}^2
	+ \frac{1}{2}\int_0^t \|\bar{u}\|_V^2 \ds
	\notag \\
	& \le \frac{1}{2} \|\bar{u}_0\|_{H}^2
	+ \frac{1}{2} \|\bar{u}_{0\Gamma}\|_{V_{\Gamma,0}'}^2 +
	(1+L) 
	\int_0^t
	\| \bar{u} \|^2_H \ds
	+ 
	L_\Gamma 
	\int_0^t
	\| \bar{u}_\Gamma \|_{H_\Gamma} ^2 \ds
	\notag \\
	& \quad {}
	+ \frac{1}{2} \int_0^t \|\bar{f}\|_{V'}^2 \ds 
	+ 
	\frac{1}{2}
	\int_0^t 
	\| \bar{f}_\Gamma\|_{H_\Gamma}^2 \ds + 
	\frac{1}{2} \int_0^t \| \bar{u}_\Gamma \|_{H_\Gamma}^2 \ds  
\label{pier19}	
\end{align}
for all $t \in [0,T]$. Note that in the proof of Theorem~\ref{thm2b}
the term $(\bar{f}_\Gamma, \bar{u}_\Gamma)_{H_\Gamma}$ was 
controlled by	$\| \bar{f}_\Gamma\|_{V_\Gamma'} \| \bar{u}_\Gamma \|_{V_\Gamma}$, while now we have to bound it by $\| \bar{f}_\Gamma\|_{H_\Gamma} \| \bar{u}_\Gamma \|_{H_\Gamma}$ in order to arrive at ~\eqref{pier19}. Then, we can use the compactness inequality~\eqref{comp2} and the Gronwall lemma as in the previous proofs. 
\end{proof}

\section{Error estimates}
In this section, we present the error estimates. For simplicity, we fix the same data 
$ u_0, u_{0\Gamma}, f, f_\Gamma $ for all problems under consideration. The same symbol 
$ \bar{u} $ is used throughout the section; however, its meaning may vary between subsections, 
as was the case in the previous section. Therefore, care should be taken to interpret 
$ \bar{u} $ appropriately in each context.
\smallskip

From this point on, we denote the convolution product of two time functions $ a $ and $ b $~by
\[
(a * b)(t) := \int_0^t a(t - s)\, b(s)\, \mathrm{d}s.
\]

\subsection{Error estimate for $({\rm P})_{\varepsilon}$}
\pier{In this subsection, we define $ \bar{u} = u_{\varepsilon, \kappa} - u_\varepsilon $, representing the 
difference between the solution component $ u_{\varepsilon, \kappa} $ of the original problem 
$(\mathrm{P})_{\takeshi{\varepsilon\kappa}}$ and the corresponding component $ u_\varepsilon $ of $(\mathrm{P})_\varepsilon$, 
as established in Theorem~\ref{thm1}. Analogously, we use the notation 
$ \bar{\mu} := \mu_{\varepsilon, \kappa} - \mu_\varepsilon $, and similarly for other functions.}

\pier{To derive the error estimate, the additional assumption \textnormal{(A6)} becomes essential. 
In particular, this assumption ensures further regularity for the unknown function on the boundary. 
See Corollary~\ref{cor1} for further details.}

\begin{theorem}\label{thm1c} 
Assume {\rm (A1)}, {\rm (A3)}--{\rm (A6)}.~\pier{Then
there exists a positive constants $C$, independent of $\kappa\in (0,1]$,} such that
\begin{gather}
	\pier{\|\bar{u}\|_{L^\infty(0,T;H)\cap L^2(0,T;V)}}
	+ \sqrt{\varepsilon}  
	\bigl\| \nabla (1*\bar{\mu}) \bigr\|_{L^\infty(0,T;H)} 
	+ 
	\bigl\| \nabla _\Gamma (1 * \bar{\mu}_{\Gamma}) \bigr\|_{L^\infty(0,T;H_\Gamma)}
	\notag \\
	{} + \sqrt{\kappa}
	\|\nabla _\Gamma u_{\Gamma,\takeshi{\varepsilon, \kappa}}\|_{L^2(0,T;H_\Gamma)} 
	\le C \sqrt{\kappa}, 
	\label{errora}\\[2mm]
	\|\bar{u}_{\Gamma}\|_{L^\infty(0,T;V_\Gamma')} 
	\le C  \sqrt{\kappa}.
	\label{errorb}
\end{gather}
\end{theorem}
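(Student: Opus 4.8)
The plan is to derive a single Gronwall inequality for the difference of the two solutions at fixed $\varepsilon$, the crucial point being to absorb the surface--diffusion residual $-\kappa\Delta_\Gamma u_{\Gamma,\varepsilon,\kappa}$ that distinguishes $({\rm P})_{\varepsilon\kappa}$ from $({\rm P})_\varepsilon$. Setting $\bar u=u_{\varepsilon,\kappa}-u_\varepsilon$, $\bar\mu=\mu_{\varepsilon,\kappa}-\mu_\varepsilon$, $\bar u_\Gamma=u_{\Gamma,\varepsilon,\kappa}-u_{\Gamma,\varepsilon}$, $\bar\mu_\Gamma=\mu_{\Gamma,\varepsilon,\kappa}-\mu_{\Gamma,\varepsilon}$, and likewise for $\bar\xi,\bar\xi_\Gamma$, I would subtract the weak formulation \eqref{main1-3}, written for both problems, in which the extra term $\kappa\int_\Gamma\nabla_\Gamma u_{\Gamma,\varepsilon,\kappa}\cdot\nabla_\Gamma z_\Gamma\dg$ is present only for $({\rm P})_{\varepsilon\kappa}$, and separately the chemical--potential identities \eqref{e6weak}. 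Since the data are common, $\bar u(0)=0$ and $\bar u_\Gamma(0)=0$; moreover, testing \eqref{e6weak} with $z_\Gamma=1$ gives mass conservation, whence $\int_\Gamma\bar u_\Gamma\dg=0$ on $(0,T)$, which legitimizes the dual constructions below.

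The first decisive step is to exploit the extra regularity granted by assumption (A6). By Corollary~\ref{cor1} the limit solution satisfies $u_{\Gamma,\varepsilon}\in L^2(0,T;V_\Gamma)$ with a bound independent of $\kappa$, and the same elliptic--trace argument applied to $({\rm P})_{\varepsilon\kappa}$ yields $u_{\Gamma,\varepsilon,\kappa}\in L^2(0,T;V_\Gamma)$. Hence $\bar u_\Gamma\in L^2(0,T;V_\Gamma)$, the pair $(\bar u,\bar u_\Gamma)$ is an admissible test function in $\boldsymbol V$ for a.a.\ $t$, and the residual may be integrated by parts as $\langle-\kappa\Delta_\Gamma u_{\Gamma,\varepsilon,\kappa},\bar u_\Gamma\rangle=\kappa\int_\Gamma\nabla_\Gamma u_{\Gamma,\varepsilon,\kappa}\cdot\nabla_\Gamma\bar u_\Gamma\dg$.

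I would then handle the coupling exactly as in Theorem~\ref{thm1b}, but with the time--integrated potentials $1*\bar\mu$ and $1*\bar\mu_\Gamma$ in the role of the auxiliary functions there: integrating the difference of \eqref{e6weak} in time and testing it with $(\bar\mu,\bar\mu_\Gamma)\in\boldsymbol V$ converts the coupling into a total derivative,
\begin{equation*}
\int_\Gamma\bar\mu_\Gamma\,\bar u_\Gamma\dg=-\frac12\frac{d}{dt}\Bigl(\varepsilon\bigl\|\nabla(1*\bar\mu)\bigr\|_H^2+\bigl\|\nabla_\Gamma(1*\bar\mu_\Gamma)\bigr\|_{H_\Gamma}^2\Bigr),
\end{equation*}
while the same elliptic identity gives the coercivity $\varepsilon\|\nabla(1*\bar\mu)(t)\|_H^2+\|\nabla_\Gamma(1*\bar\mu_\Gamma)(t)\|_{H_\Gamma}^2\ge c\,\|\bar u_\Gamma(t)\|_{V_\Gamma'}^2$ (cf.\ \eqref{eqc}). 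Testing the difference of \eqref{main1-3} with $(\bar u,\bar u_\Gamma)$ produces $\frac12\frac{d}{dt}\|\bar u\|_H^2+\|\nabla\bar u\|_H^2$, the nonnegative monotone contributions $\int_\Omega\bar\xi\,\bar u\dx\ge0$ and $\int_\Gamma\bar\xi_\Gamma\,\bar u_\Gamma\dg\ge0$ (to be discarded), and the Lipschitz terms, controlled by $L\|\bar u\|_H^2$ and, through \eqref{comp2}, by $\delta\|\bar u\|_V^2+C_\delta\|\bar u\|_H^2$.

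The central computation is the treatment of the residual: writing $\bar u_\Gamma=u_{\Gamma,\varepsilon,\kappa}-u_{\Gamma,\varepsilon}$,
\begin{equation*}
\kappa\int_\Gamma\nabla_\Gamma u_{\Gamma,\varepsilon,\kappa}\cdot\nabla_\Gamma\bar u_\Gamma\dg
=\kappa\|\nabla_\Gamma u_{\Gamma,\varepsilon,\kappa}\|_{H_\Gamma}^2
-\kappa\int_\Gamma\nabla_\Gamma u_{\Gamma,\varepsilon,\kappa}\cdot\nabla_\Gamma u_{\Gamma,\varepsilon}\dg,
\end{equation*}
so the first term is a favourable sign term kept on the left---precisely the quantity $\sqrt\kappa\|\nabla_\Gamma u_{\Gamma,\varepsilon,\kappa}\|_{L^2(0,T;H_\Gamma)}$ in \eqref{errora}---while Young's inequality bounds the cross term by $\tfrac\kappa2\|\nabla_\Gamma u_{\Gamma,\varepsilon,\kappa}\|_{H_\Gamma}^2+\tfrac\kappa2\|\nabla_\Gamma u_{\Gamma,\varepsilon}\|_{H_\Gamma}^2$, whose first half is reabsorbed and whose second half, by the $\kappa$--independent bound on $u_{\Gamma,\varepsilon}$ in $L^2(0,T;V_\Gamma)$, integrates to a term of order $\kappa$. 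Collecting all contributions, integrating over $(0,t)$ with vanishing initial terms, choosing $\delta$ small and applying the Gronwall lemma would give $\|\bar u(t)\|_H^2+\int_0^t\|\nabla\bar u\|_H^2\ds+\kappa\int_0^t\|\nabla_\Gamma u_{\Gamma,\varepsilon,\kappa}\|_{H_\Gamma}^2\ds+\varepsilon\|\nabla(1*\bar\mu)(t)\|_H^2+\|\nabla_\Gamma(1*\bar\mu_\Gamma)(t)\|_{H_\Gamma}^2\le C\kappa$, which yields \eqref{errora} directly and, through the coercivity above, also \eqref{errorb}. The main obstacle is exactly this residual surface--diffusion term paired against the low--regularity difference $\bar u_\Gamma$: without (A6) the trace $u_{\Gamma,\varepsilon}$ would only lie in $Z_\Gamma$, the integration by parts would be meaningless, and the cross term could not be shown to be $O(\kappa)$; the regularity furnished by Corollary~\ref{cor1} is what makes the scheme rigorous.
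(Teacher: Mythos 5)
Your proposal is correct and follows essentially the same route as the paper: subtracting the weak formulations, time-integrating the difference of the chemical-potential identities and testing with $(\bar\mu,\bar\mu_\Gamma)$ to turn the coupling term $\int_\Gamma\bar\mu_\Gamma\bar u_\Gamma\dg$ into a total time derivative of $\tfrac{\varepsilon}{2}\|\nabla(1*\bar\mu)\|_H^2+\tfrac12\|\nabla_\Gamma(1*\bar\mu_\Gamma)\|_{H_\Gamma}^2$, testing the difference of the bulk/boundary equations with $(\bar u,\bar u_\Gamma)$ (admissible precisely because of Corollary~\ref{cor1} under (A6)), splitting the residual $\kappa\int_\Gamma\nabla_\Gamma u_{\Gamma,\varepsilon,\kappa}\cdot\nabla_\Gamma\bar u_\Gamma\dg$ so that $\kappa\|\nabla_\Gamma u_{\Gamma,\varepsilon,\kappa}\|_{H_\Gamma}^2$ stays on the left and the cross term is $O(\kappa)$ thanks to the uniform $L^2(0,T;V_\Gamma)$ bound on $u_{\Gamma,\varepsilon}$, and concluding by Gronwall, with \eqref{errorb} recovered from the time-integrated identity. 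This matches the paper's argument in all essential steps.
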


\begin{proof}
\pier{In view of~\eqref{e1}, by} subtracting \eqref{e6weak} for $({\rm P})_\varepsilon$ from \pier{the the variational equality  \eqref{ek1and6} for $({\rm P})_{\varepsilon\kappa}$, we have that} 
\begin{equation*}
	\langle \partial_t \bar{u}_{\Gamma},z_\Gamma \rangle_{V_\Gamma',V_\Gamma}
	+ \varepsilon \int_\Omega 
	\nabla \bar{\mu} \cdot \nabla z \dx
	+ \int_\Gamma \nabla _\Gamma \bar{\mu}_{\Gamma} \cdot \nabla_\Gamma z_\Gamma \dg
	= 0
\end{equation*}
for all $(z,z_\Gamma) \in \boldsymbol{V}$, a.e.\ in $(0,T)$. 
Now, integrating this equality \pier{with respect to time, we deduce that}  
\begin{equation}
	\bigl 
	\langle \bar{u}_{\Gamma}(t),z_\Gamma 
	\bigr\rangle_{V_\Gamma',V_\Gamma}
	 = 
	 - \varepsilon \int_\Omega 
	\nabla (1* \bar{\mu})(t) \cdot \nabla z \dx
	- \int_\Gamma \nabla _\Gamma (1 * \bar{\mu}_{\Gamma})(t) \cdot \nabla_\Gamma z_\Gamma \dg
	\label{error2}
\end{equation}
for all $(z,z_\Gamma) \in \boldsymbol{V}$ and all $t \in [0,T]$, where 
we used the same initial value for $u_{\Gamma, \varepsilon\kappa}$ and $u_\Gamma$. 
Then, \pier{in~\eqref{error2} we choose $(z,z_\Gamma)=(\bar{\mu}, \bar{\mu}_\Gamma)$ and obtain}
\begin{align}
	\int_\Gamma \bar{u}_{\Gamma}(t) \bar{\mu}_\Gamma(t) \dg
	 & = \bigl 
	\langle \bar{u}_{\Gamma}(t),\bar{\mu}_\Gamma(t)
	\bigr\rangle_{V_\Gamma',V_\Gamma} 
	\notag \\
	& =
	- \frac{\varepsilon}{2} \frac{d}{dt} \int_\Omega 
	\bigl| \nabla (1* \bar{\mu})(t) \bigr| ^2 \dx
	- \frac{1}{2} \frac{d}{dt} \int_\Gamma 
	\bigl| \nabla _\Gamma (1 * \bar{\mu}_{\Gamma})(t) \bigr|^2
	 \dg.
	 \label{replace1}
\end{align}
On the other hand, take the difference between the weak form of the first equation in 
\eqref{ek3} \pier{coupled with the first equation in \eqref{ek7} for $({\rm P})_{\varepsilon\kappa}$
 and the one in \eqref{e3} with \eqref{e7} for $({\rm P})_{\varepsilon}$. Then, we arrive at}
\begin{align*}
	& \int_\Omega \partial_t \bar{u} z \dx 
	+ 
	\int_\Omega \nabla \bar{u} \cdot \nabla z\dx 
	+
	\int_\Omega \bar{\xi} z \dx 
	+
	\langle \bar{\xi}_\Gamma, z_\Gamma \rangle_{Z_\Gamma',Z_\Gamma}
	- \int_\Gamma \bar{\mu}_\Gamma
	z_\Gamma \dg
	\notag \\
	\quad 
	& = 
	- \kappa \int_\Gamma \nabla _\Gamma u_{\Gamma,\takeshi{\varepsilon, \kappa}} \cdot \nabla _\Gamma
	z_\Gamma \dg
	- 
	\int_\Omega
	\bigl( 
	\pi(u_{\takeshi{\varepsilon, \kappa}})-\pi(u_\varepsilon) \bigr) z \dx 
	-
	\int_\Gamma 
	\bigl( \pi_\Gamma(u_{\Gamma,\takeshi{\varepsilon, \kappa}}) 
	- \pi_\Gamma (u_{\Gamma,\varepsilon}) \bigr) z_\Gamma \dg
\end{align*}
for all \pier{$(z,z_\Gamma) \in \boldsymbol{V}$}, a.e.\ in $(0,T)$. 
Now, \pier{we take $(z,z_\Gamma) = (\bar{u}, \bar{u}_\Gamma)$, which is possible on account of Corollary ~\ref{cor1},
and rewrite the term $\int_\Gamma \bar{\mu}_\Gamma \bar{u}_\Gamma \dg$ on account of~\eqref{replace1}.}
Using the monotonicity of $\beta$ and $\beta_\Gamma$, 
integrating the resultant \pier{with respect to time, and 
adding $\int_0^t \| \bar{u} \|_{H}^2 \ds$ to both sides,}
we infer that 
\begin{align*}
	& \frac{1}{2} \bigl\| \bar{u}(t) \bigr\|_H^2 
	+ 
	\int_0^t \| \bar{u} \|_V^2 \ds 
	 + \frac{\varepsilon}{2} 
	\bigl\| \nabla (1*\bar{\mu})(t) \bigr\|_H ^2 
	\notag \\
	\quad 
	& \quad {}
	+ \frac{1}{2} 
	\bigl\| \nabla _\Gamma (1 * \bar{\mu}_{\Gamma})(t) \bigr\|_{H_\Gamma}^2
	+ \kappa \int_0^t\| \nabla _\Gamma u_{\Gamma,\takeshi{\varepsilon, \kappa}}\|_{H_\Gamma}^2  \ds	
	\notag \\
	\quad 
	& \le
	- \kappa \int_0^t 
	( 
	\nabla _\Gamma u_{\Gamma,\takeshi{\varepsilon, \kappa}}, \nabla _\Gamma
	u_{\Gamma,\varepsilon} )_{H_\Gamma} \ds + (1+L) \int_0^t \|\bar{u}\|_H^2 \ds
	+ L_\Gamma \int_0^t \|\bar{u}_\Gamma\|_{H_\Gamma}^2 \ds
	\notag \\
	& \le \frac{\kappa}{2} \int_0^t 
	\|
	\nabla _\Gamma u_{\Gamma,\takeshi{\varepsilon, \kappa}}
	\|_{H_\Gamma}^2 \ds
	+ 
	\frac{\kappa}{2} \int_0^t 
	\|
	\nabla _\Gamma
	u_{\Gamma,\varepsilon} \|_{H_\Gamma}^2 \ds
		+ \frac{1}{2} \int_0^t \| \bar{u}\|_{V}^2 \ds + C \int_0^t \|\bar{u}\|_H^2 \ds
\end{align*}
for all $t \in [0,T]$, 
where \pier{\eqref{comp2} has been used to estimate the term with factor $L_\Gamma$.}
The point of emphasis is \pier{now  the uniform estimate (cf.~\eqref{statre1})}
\begin{align*}
	\int_0^t \| \nabla_\Gamma u_{\Gamma,\varepsilon} \|_{H_\Gamma}^2 \ds 
	& \le 
	C_{\rm tr} \int_0^t \|  u_{\varepsilon} \|_{H^{3/2}(\Omega)}^2 \ds 
\end{align*}
\pier{which allows us to conclude for the uniform boundedness as in Corollary \ref{cor1} under the additional assumption {\rm (A6)}.} 
Therefore, applying the Gronwall inequality, we derive~\pier{\eqref{errora}.
Finally, from \eqref{errora} and a comparison in \eqref{error2} we easily infer that~\eqref{errorb} holds for all $\varepsilon \in (0,1]$.}
\end{proof}

\subsection{Error estimate for $({\rm P})_{\kappa}$}
\pier{In this subsection, we set $ \bar{u} = u_{\varepsilon, \kappa} - u_\kappa $, representing the difference 
between the first component $ u_{\varepsilon, \kappa} $ of the solution to the original problem 
$(\mathrm{P})_{\takeshi{\varepsilon\kappa}}$ and the corresponding component $ u_\kappa $ of the solution to 
$(\mathrm{P})_\kappa$, as established in Theorem~\ref{thm2}. The same bar notation is used analogously 
for the other functions.}

\pier{For the validity of the following theorem, the assumption \textnormal{(A6)} is not required.}

\begin{theorem}\label{thm2c} 
Assume {\rm (A1)}--{\rm (A5)}. \pier{Then there exists a positive constant $C$, independent of $\varepsilon\in (0,1]$, such that}
\begin{gather}
	\pier{ \|\bar{u}\|_{L^\infty(0,T;H) \cap L^2(0,T;V)} 
	+ \sqrt{\kappa}
	\|\nabla _\Gamma \bar{u}_{\Gamma}\|_{L^2(0,T;H_\Gamma)} 	
	+
	\sqrt{\varepsilon}  
	\bigl\| \nabla (1*\mu_{\takeshi{\varepsilon, \kappa}}) \bigr\|_{L^\infty(0,T;H)} }
	\notag
	\\
	\pier{{}+ \bigl\| \nabla _\Gamma (1 * \bar{\mu}_{\Gamma}) \bigr\|_{L^\infty(0,T;H_\Gamma)}
	\le C \sqrt{\varepsilon}, }
	\label{error2a}
	\\[2mm]
	\pier{\|\bar{u}_{\Gamma}\|_{L^\infty(0,T;V_\Gamma')} 
	\le C \sqrt{\varepsilon}.}
	\label{error2b}
\end{gather}%
\end{theorem}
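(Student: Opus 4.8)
The plan is to follow closely the proof of Theorem~\ref{thm1c}, the only structural novelty being that Problem~$({\rm P})_{\kappa}$ possesses no bulk chemical potential, so that in the boundary balance only $\mu_{\varepsilon,\kappa}$ survives in $\Omega$. First I would subtract the weak equation~\eqref{main2-1} of $({\rm P})_{\kappa}$ from the variational equality~\eqref{ek1and6} of $({\rm P})_{\varepsilon\kappa}$ and integrate in time. As the two problems share the same boundary datum $u_{0\Gamma}$, this gives, for every $(z,z_\Gamma)\in\boldsymbol{V}$ and every $t\in[0,T]$, the analogue of~\eqref{error2}:
\[
\langle\bar{u}_\Gamma(t),z_\Gamma\rangle_{V_\Gamma',V_\Gamma}
+\varepsilon\int_\Omega\nabla(1*\mu_{\varepsilon,\kappa})(t)\cdot\nabla z\dx
+\int_\Gamma\nabla_\Gamma(1*\bar{\mu}_\Gamma)(t)\cdot\nabla_\Gamma z_\Gamma\dg=0 .
\]

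The decisive step is to recover an energy identity in the spirit of~\eqref{replace1}. Here the clean single choice $(\bar{\mu},\bar{\mu}_\Gamma)$ of Theorem~\ref{thm1c} is not available, because the bulk test function $\mu_{\varepsilon,\kappa}$ has trace $\mu_{\Gamma,\varepsilon\kappa}\neq\bar{\mu}_\Gamma$. Instead I would test the relation above with the two admissible pairs $(\mu_{\varepsilon,\kappa},\mu_{\Gamma,\varepsilon\kappa})$ and $(\mathcal R\mu_{\Gamma,\kappa},\mu_{\Gamma,\kappa})\in\boldsymbol{V}$, the latter built from the recovery operator of~\eqref{recov}, and subtract. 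Since the boundary components differ precisely by $\bar{\mu}_\Gamma$, the outcome is
\[
\int_\Gamma\bar{u}_\Gamma\bar{\mu}_\Gamma\dg
=-\frac{\varepsilon}{2}\frac{d}{dt}\bigl\|\nabla(1*\mu_{\varepsilon,\kappa})\bigr\|_H^2
-\frac12\frac{d}{dt}\bigl\|\nabla_\Gamma(1*\bar{\mu}_\Gamma)\bigr\|_{H_\Gamma}^2
+\varepsilon\int_\Omega\nabla(1*\mu_{\varepsilon,\kappa})\cdot\nabla\mathcal R\mu_{\Gamma,\kappa}\dx ,
\]
the price of the mismatch being the last term, which however now carries an explicit factor $\varepsilon$.

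Next, exactly as in~\eqref{base2}, I would take the difference of the Allen--Cahn equation~\eqref{e3} and of the chemical-potential relation~\eqref{k7} (identical in form for both problems), test with $(\bar{u},\bar{u}_\Gamma)$, integrate in time, and insert the above identity, discarding the nonnegative monotone terms $\int_\Omega\bar{\xi}\,\bar{u}\dx$ and $\int_\Gamma\bar{\xi}_\Gamma\bar{u}_\Gamma\dg$. The Lipschitz contributions of $\pi$ and $\pi_\Gamma$ are absorbed through the compactness inequality~\eqref{comp2} and Gronwall's lemma, as before. The new cross term is handled by Young's inequality: I would split $\varepsilon\,\nabla(1*\mu_{\varepsilon,\kappa})\cdot\nabla\mathcal R\mu_{\Gamma,\kappa}$ into $\tfrac{\varepsilon}{2}\|\nabla(1*\mu_{\varepsilon,\kappa})\|_H^2$ --- which is exactly the bulk energy and is reabsorbed by Gronwall --- plus $\tfrac{\varepsilon}{2}\|\nabla\mathcal R\mu_{\Gamma,\kappa}\|_H^2$; by~\eqref{recover} and the uniform bound $\|\mu_{\Gamma,\kappa}\|_{L^2(0,T;V_\Gamma)}\le C$, inherited from~\eqref{pier5} through~\eqref{cv7e}, this second piece is of order $\varepsilon$. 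Gronwall's lemma then yields the full energy $\le C\varepsilon$, i.e.~\eqref{error2a}. Finally,~\eqref{error2b} follows by comparison: taking $z=\mathcal R z_\Gamma$ in the integrated boundary relation and passing to the supremum over $\|z_\Gamma\|_{V_\Gamma}\le1$ bounds $\|\bar{u}_\Gamma(t)\|_{V_\Gamma'}$ by $\varepsilon\|\nabla(1*\mu_{\varepsilon,\kappa})(t)\|_H+\|\nabla_\Gamma(1*\bar{\mu}_\Gamma)(t)\|_{H_\Gamma}\le C\sqrt{\varepsilon}$.

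The main obstacle is precisely this decisive step: the absence of a bulk potential in $({\rm P})_{\kappa}$ destroys the symmetric test-function structure exploited in Theorem~\ref{thm1c}, and the residual cross term must be shown to contribute at order $\varepsilon$ rather than $\sqrt{\varepsilon}$ --- otherwise the convergence rate would degrade to $\varepsilon^{1/4}$. This is exactly why the explicit $\varepsilon$ prefactor generated by the two-test-function subtraction, combined with the weighted Young splitting against the bulk energy, is indispensable for preserving the rate $1/2$.
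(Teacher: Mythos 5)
Your proposal is correct and follows the same overall architecture as the paper's proof: subtract the boundary balances, integrate in time, derive an energy identity for $\int_\Gamma\bar u_\Gamma\bar\mu_\Gamma\dg$, test the difference of the bulk relations by $(\bar u,\bar u_\Gamma)$ (keeping the nonnegative $\kappa\int_\Gamma|\nabla_\Gamma\bar u_\Gamma|^2\dg$ on the left), absorb the cross term by Young, close with \eqref{comp2} and Gronwall, and obtain the $V_\Gamma'$ bound by comparison in the integrated boundary relation. The one place where you genuinely deviate is the derivation of the energy identity. The paper tests the integrated relation with the single pair $(\mu_{\varepsilon,\kappa}-{\mathcal H}\mu_{\Gamma,\kappa},\bar\mu_\Gamma)=({\mathcal H}\bar\mu_\Gamma,\bar\mu_\Gamma)$, where ${\mathcal H}$ is the harmonic extension \eqref{harmonic}; the resulting cross term involves $\nabla({\mathcal H}\mu_{\Gamma,\kappa})$ and then requires the auxiliary estimate \eqref{idea} (obtained by testing \eqref{harmonic} with ${\mathcal H}\mu_{\Gamma,\kappa}-{\mathcal R}\mu_{\Gamma,\kappa}\in H_0^1(\Omega)$) to reduce it to $\tfrac{\varepsilon}{2}C_{\mathcal R}^2\|\mu_{\Gamma,\kappa}\|_{Z_\Gamma}^2$. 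You instead test with the two admissible pairs $(\mu_{\varepsilon,\kappa},\mu_{\Gamma,\varepsilon\kappa})$ and $({\mathcal R}\mu_{\Gamma,\kappa},\mu_{\Gamma,\kappa})$ and subtract, which produces the same identity with ${\mathcal R}\mu_{\Gamma,\kappa}$ directly in place of ${\mathcal H}\mu_{\Gamma,\kappa}$, so the cross term is bounded at once by \eqref{recover} and the uniform $L^2(0,T;V_\Gamma)$ bound on $\mu_{\Gamma,\kappa}$. This bypasses the harmonic extension and \eqref{idea} entirely, at no cost: both routes yield a squared error of order $\varepsilon$ and hence the rate $1/2$, and your observation that the explicit $\varepsilon$ prefactor on the cross term is what preserves this rate matches the mechanism in the paper.
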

\begin{proof}
The proof is similar to one of Theorem \ref{thm1c}. 
Subtracting \eqref{main2-1} for $({\rm P})_\kappa$ from \pier{the the variational equality~\eqref{ek1and6} for $({\rm P})_{\varepsilon\kappa}$, 
we obtain}
\begin{equation*}
	\langle \partial_t \bar{u}_{\Gamma},z_\Gamma \rangle_{V_\Gamma',V_\Gamma}
	+ \int_\Gamma \nabla _\Gamma \bar{\mu}_{\Gamma} \cdot \nabla_\Gamma z_\Gamma \dg
	= - \varepsilon \int_\Omega 
	\nabla \mu_{\takeshi{\varepsilon, \kappa}} \cdot \nabla z \dx
\end{equation*}
for all \pier{$(z,z_\Gamma) \in \boldsymbol{V}$,} a.e.\ in $(0,T)$, where \pier{particular attention must be paid to the last term.}
Now, integrating this equality over $[0,t]$, we have that 
\begin{equation}
	\bigl 
	\langle \bar{u}_{\Gamma}(t),z_\Gamma 
	\bigr\rangle_{V_\Gamma',V_\Gamma}
	 = 
	 - \varepsilon \int_\Omega 
	\nabla (1*\mu_{\takeshi{\varepsilon, \kappa}})(t) \cdot \nabla z \dx
	- \int_\Gamma \nabla _\Gamma (1 * \bar{\mu}_{\Gamma})(t) \cdot \nabla_\Gamma z_\Gamma \dg
	\label{error-ek}
\end{equation}
for all \pier{$(z,z_\Gamma) \in \boldsymbol{V}$} and all $t \in [0,T]$. 
Then, in the above we want to choose $\pier{(z,z_\Gamma)}=(\mu_{\takeshi{\varepsilon, \kappa}}-{\mathcal H} \mu_{\Gamma,\kappa}, \bar{\mu}_\Gamma)$, where ${\mathcal H}:Z_\Gamma \to V$ is the harmonic extension defined by 
\begin{equation}
	\begin{cases}
	\displaystyle
	\varepsilon \int _\Omega \nabla {\mathcal H} v_\Gamma \cdot \nabla z \dx = 0
	& \quad {\rm for~all~} z \in H_0^1(\Omega), \\
	\bigl( {\mathcal H} v_\Gamma \bigr)|_\Gamma \pier{{}= v_\Gamma}
	& \quad {\rm a.e.\ on~}\Gamma
	\end{cases}
	\label{harmonic}
\end{equation}
for all $v_\Gamma \in Z_\Gamma$. 
Here, from \pier{\eqref{e1}} we see that 
\begin{equation*}
	\varepsilon \int_\Omega \nabla \mu_{\takeshi{\varepsilon, \kappa}} (t) \cdot \nabla z \dx 
	= 0
	\quad {\rm for~all~} z \in H^1_0(\Omega).
\end{equation*}
Moreover, using \pier{\eqref{e2} we deduce  that} ${\mathcal H} \mu_{\Gamma,\takeshi{\varepsilon, \kappa}}=\mu_{\takeshi{\varepsilon, \kappa}}$, that is, $\mu_{\takeshi{\varepsilon, \kappa}}-{\mathcal H} \mu_{\Gamma,\kappa}={\mathcal H} \bar{\mu}_\Gamma$. 
Hence, \pier{letting $\pier{(z,z_\Gamma)}=(\mu_{\takeshi{\varepsilon, \kappa}}-{\mathcal H} \mu_{\Gamma,\kappa}, \bar{\mu}_\Gamma)=({\mathcal H} \bar{\mu}_\Gamma, \bar{\mu}_\Gamma) $ in \eqref{error-ek}, we infer that}
\begin{align}
	&\int_\Gamma \bar{u}_{\Gamma}(t) \bar{\mu}_\Gamma(t) \dg
	 = \bigl 
	\langle \bar{u}_{\Gamma}(t),\bar{\mu}_\Gamma(t)
	\bigr\rangle_{V_\Gamma',V_\Gamma} 
	\notag \\
	&\quad {} =
	 - \frac{\varepsilon}{2} \frac{d}{dt} \int_\Omega 
	\bigl| \nabla (1*\mu_{\takeshi{\varepsilon, \kappa}})(t) \bigr| ^2 \dx
	+ \varepsilon \int_\Omega 
	\nabla (1*\mu_{\takeshi{\varepsilon, \kappa}})(t) \cdot \nabla ({\mathcal H} \pier{\mu_{\Gamma,\kappa}})(t) \dx
	\notag \\
	&\quad\quad\   {} 
	- \frac{1}{2} \frac{d}{dt} \int_\Gamma 
	\bigl| \nabla _\Gamma (1 * \bar{\mu}_{\Gamma})(t) \bigr|^2
	 \dg
	.
	\label{replace2}
\end{align}
On the other hand, take the difference between the weak form of the first equation in 
\eqref{ek3} \pier{complemented by} the first equation in \eqref{ek7} for $({\rm P})_{\varepsilon\kappa}$
 and \pier{the one in \eqref{e3} with \eqref{k7}} for $({\rm P})_{\kappa}$. \pier{As the coefficient $\kappa> 0$ is present there, we  
 test by $(z,z_\Gamma) = (\bar{u}, \bar{u}_\Gamma)$ and use the monotonicity of $\beta$ and $\beta_\Gamma$ 
 to infer that}
\begin{align*}
	& \frac{1}{2} \frac{d}{dt} \int_\Omega |\bar{u}|^2 \dx 
	+ \int_\Omega | \nabla \bar{u} | ^2 \dx 
	+
	\kappa \int_\Gamma | \nabla _\Gamma \bar{u}_{\Gamma}|^2 \dg
	- \int_\Gamma \bar{\mu}_\Gamma
	\bar{u}_\Gamma \dg
	\notag \\
	\quad 
	& \le 
	- 
	\int_\Omega
	\bigl( 
	\pi(u_{\takeshi{\varepsilon, \kappa}})-\pi(u_\kappa) \bigr) \bar{u} \dx
	-
	\int_\Gamma 
	\bigl( \pi_\Gamma(u_{\Gamma,\takeshi{\varepsilon, \kappa}}) 
	- \pi_\Gamma (u_{\Gamma,\kappa}) \bigr) \bar{u}_\Gamma \dg
\end{align*}
\pier{a.e.\ in $(0,T)$.}
Then, in the above, replacing the term $\int_\Gamma \bar{\mu}_\Gamma \bar{u}_\Gamma \dg$ by \pier{the expression in~\eqref{replace2}, 
integrating the resultant over $(0,t)$ and adding $\int_0^t \| \bar{u} \|_H^2 \ds$ to both sides,
we deduce that}
\begin{align}
	& \frac{1}{2} \bigl\|\bar{u} (t) \bigr\|_H^2 
	+ 
	\int_0^t \| \bar{u} \|_V ^2 \ds 
	+ \kappa \int_0^t\| \nabla _\Gamma \bar{u}_{\Gamma}\|_{H_\Gamma}^2 \ds
	 \notag \\
	& \quad {}
	+ \frac{\varepsilon}{2} 
	\bigl\| \nabla (1*\mu_{\takeshi{\varepsilon, \kappa}})(t) \bigr\|_H ^2 
	+ \frac{1}{2} 
	\bigl\| \nabla _\Gamma (1 * \bar{\mu}_{\Gamma})(t) \bigr\|_{H_\Gamma}^2
	\notag \\
	\quad 
	& \le -  \varepsilon \int_0^t \bigl(
	\nabla (1*\mu_{\takeshi{\varepsilon, \kappa}}), \nabla ({\mathcal H} \mu_{\Gamma,\kappa}) \bigr)_H \ds
	 + (1+L) \int_0^t \|\bar{u}\|_H^2 \ds
	+ L_\Gamma \int_0^t \|\bar{u}_\Gamma\|_{H_\Gamma}^2 \ds
	\notag \\
	& \le \frac{\varepsilon}{2} \int_0^t \bigl\|
	\nabla (1*\mu_{\takeshi{\varepsilon, \kappa}})\bigr\|_H^2 \ds
	+
	\frac{\varepsilon}{2} \int_0^t \bigl\|
	\nabla ({\mathcal H} \mu_{\Gamma,\kappa}) \bigr\|_H^2 \ds
	\notag \\
	& \quad{}	+ \pier{C}\int_0^t \|\bar{u}\|_H^2 \ds
	+ \frac{1}{2} \int_0^t \| \bar{u}\|_{V}^2 \ds
	\label{pier20}
\end{align}
for all $t \in [0,T]$, where we used \eqref{comp2} again. Now, 
\pier{employing the recovering operator ${\mathcal R}:Z_\Gamma \to V$ specified by~\eqref{recov},}
we see that $z := {\mathcal H} \mu_{\Gamma,\kappa}-{\mathcal R} \mu_{\Gamma,\kappa} \in H_0^1(\Omega)$.
Therefore, taking $z := {\mathcal H} \mu_{\Gamma,\kappa}-{\mathcal R} \mu_{\Gamma,\kappa}$ in \eqref{harmonic}
\pier{and recalling~\eqref{recover}}, we have that
\begin{align}
	\varepsilon \int_\Omega |\nabla {\mathcal H} \mu_{\Gamma,\kappa}|^2\ dx 
	& = \varepsilon \int_\Omega \nabla {\mathcal H} \mu_{\Gamma,\kappa}\cdot \nabla {\mathcal R} \mu_{\Gamma, \kappa} \dx \notag \\
	& \le \frac{\varepsilon}{2} \int_\Omega |\nabla {\mathcal H} \mu_{\Gamma,\kappa}|^2 \dx
	+ \frac{\varepsilon}{2} \int_\Omega |\nabla {\mathcal R} \mu_{\Gamma,\kappa}|^2 \dx \notag \\
		& \le 
		\frac{\varepsilon}{2} \int_\Omega |\nabla {\mathcal H} \mu_{\Gamma,\kappa}|^2 \dx
		+ \frac{\varepsilon }{2}C_{\mathcal R}^2 \|\mu_{\Gamma,\kappa} \|_{Z_\Gamma}^2
		\label{idea}
\end{align}
a.e in $(0,T)$. \pier{In view of Lemma~\ref{L4}, from \eqref{pier20} and \eqref{idea} it follows that
\begin{align*}
	& \frac{1}{2} \bigl\|\bar{u} (t) \bigr\|_H^2 + 
	\frac{1}{2}\int_0^t \| \bar{u} \|_V ^2 \ds 
	+ \kappa \int_0^t\| \nabla _\Gamma \bar{u}_{\Gamma}\|_{H_\Gamma}^2 \ds
          \notag \\
	& \quad {}
	+ \frac{\varepsilon}{2} 
	\bigl\| \nabla (1*\mu_{\takeshi{\varepsilon, \kappa}})(t) \bigr\|_H ^2 
	+ \frac{1}{2} 
	\bigl\| \nabla _\Gamma (1 * \bar{\mu}_{\Gamma})(t) \bigr\|_{H_\Gamma}^2
	\notag \\
	& \le 
	C \int_0^t \|\bar{u}\|_H^2 \ds
	+
	\frac{\varepsilon}{2} \int_0^t \bigl\|
	\nabla (1*\mu_{\takeshi{\varepsilon, \kappa}})\bigr\|_H^2 \ds
	+
	{\varepsilon}\,\pier{C} M_4^2 
\end{align*}
for all $t \in [0,T]$.} Therefore, applying the Gronwall inequality, we \pier{derive \eqref{error2a}.
Finally, from \eqref{error2a} and a comparison of terms in \eqref{error-ek} we arrive at}~\eqref{error2b}. 
\end{proof}

\subsection{Error estimate for $({\rm P})$}
In this subsection, we set $\bar{u}:=u_{\takeshi{\varepsilon, \kappa}}-u$ as the difference between the solution $u_{\takeshi{\varepsilon, \kappa}}$ of the starting problem $({\rm P})_{\varepsilon\kappa}$ and the solution $u$ of $({\rm P})$ obtained in Theorem \ref{thm3}. As \pier{in Theorem~\ref{thm1c}}, we need the additional regularity for the unknown function on the boundary, obtained in 
Corollary~\ref{cor3} under the assumption~{\rm (A6)}.

\begin{theorem}\label{thm3c} 
Assume {\rm (A1)}, {\rm (A3)}--{\rm (A6)}. ~\pier{Then
there exists a positive constants $C$, independent of $\varepsilon, \kappa \in (0,1]$,} such that
\begin{gather}
\pier{\|\bar{u}\|_{L^\infty(0,T;H)\cap \takeshi{L^2}(0,T;V)} + 
	\sqrt{\varepsilon}  
	\bigl\| \nabla (1*\mu_{\takeshi{\varepsilon, \kappa}}) \bigr\|_{L^\infty(0,T;H)} 
	+
	\bigl\| \nabla _\Gamma (1 * \bar{\mu}_{\Gamma}) \bigr\|_{L^\infty(0,T;H_\Gamma)}}
	\notag\\
	{}+ \sqrt{\kappa}
	\|\nabla _\Gamma u_{\Gamma,\takeshi{\varepsilon, \kappa}}\|_{L^2(0,T;H_\Gamma)} 
	\le C (\sqrt{\varepsilon}+\sqrt{\kappa}), \label{error3a}\\[2mm]
	\|\bar{u}_{\Gamma}\|_{L^\infty(0,T;V_\Gamma')} 
	\le C (\sqrt{\varepsilon}+\sqrt{\kappa}). \label{error3b}
\end{gather}
\end{theorem}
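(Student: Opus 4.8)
The plan is to superpose the two mechanisms already developed for Theorems~\ref{thm1c} and~\ref{thm2c}, since the limit problem $({\rm P})$ carries neither the bulk coupling parameter $\varepsilon$ nor the surface diffusion parameter $\kappa$, so that \emph{both} error sources appear simultaneously. The crucial structural observation is that they enter additively through \emph{different} equations of the coupled system and therefore do not interfere. First I would subtract the weak boundary mass balance~\eqref{main3-2} of $({\rm P})$ from the corresponding equation~\eqref{ek1and6} of $({\rm P})_{\varepsilon\kappa}$; because $({\rm P})$ lacks the term $\varepsilon\int_\Omega\nabla\mu\cdot\nabla z\dx$, this leaves the source $-\varepsilon\int_\Omega\nabla\mu_{\ke}\cdot\nabla z\dx$ exactly as in Theorem~\ref{thm2c}. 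Integrating in time introduces the convolutions $1*\mu_{\ke}$ and $1*\bar{\mu}_\Gamma$, and testing with the harmonic-extension pair $(z,z_\Gamma)=({\mathcal H}\bar{\mu}_\Gamma,\bar{\mu}_\Gamma)$ defined through~\eqref{harmonic}---admissible since $\mu_{\ke}={\mathcal H}\mu_{\Gke}$ by the bulk Laplace equation~\eqref{e1}, whence $\mu_{\ke}-{\mathcal H}\mu_\Gamma={\mathcal H}\bar{\mu}_\Gamma$---produces the time-derivative structure for $\sqrt{\varepsilon}\,\|\nabla(1*\mu_{\ke})\|_H$ and $\|\nabla_\Gamma(1*\bar{\mu}_\Gamma)\|_{H_\Gamma}$ together with a cross term $\varepsilon\int_0^t(\nabla(1*\mu_{\ke}),\nabla({\mathcal H}\mu_\Gamma))_H\ds$.

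Next I would treat the Allen--Cahn block by subtracting the weak form of~\eqref{e3} coupled with~\eqref{e7} for $({\rm P})$ from that of~\eqref{ek3} coupled with~\eqref{ek7} for $({\rm P})_{\varepsilon\kappa}$. Since~\eqref{ek7} contains $-\kappa\Delta_\Gamma u_{\Gke}$ while~\eqref{e7} does not, testing with $(z,z_\Gamma)=(\bar{u},\bar{u}_\Gamma)$---admissible thanks to the extra regularity from Corollary~\ref{cor3}---yields the source $-\kappa\int_\Gamma\nabla_\Gamma u_{\Gke}\cdot\nabla_\Gamma\bar{u}_\Gamma\dg$ exactly as in Theorem~\ref{thm1c}. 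Using the monotonicity of $\beta$ and $\beta_\Gamma$, the Lipschitz continuity of $\pi$ and $\pi_\Gamma$, and then substituting the term $\int_\Gamma\bar{\mu}_\Gamma\bar{u}_\Gamma\dg$ by the expression obtained in the boundary step, I would arrive at a single differential inequality collecting $\tfrac12\|\bar{u}\|_H^2$, $\int_0^t\|\bar{u}\|_V^2\ds$, the two convolution energies $\tfrac{\varepsilon}{2}\|\nabla(1*\mu_{\ke})\|_H^2$ and $\tfrac12\|\nabla_\Gamma(1*\bar{\mu}_\Gamma)\|_{H_\Gamma}^2$, and the favourable surface term $\kappa\int_0^t\|\nabla_\Gamma u_{\Gke}\|_{H_\Gamma}^2\ds$ on the left-hand side.

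The two residual contributions are then controlled as in the earlier theorems. Writing $\bar{u}_\Gamma=u_{\Gke}-u_\Gamma$, the $\kappa$-source splits into the absorbable $-\kappa\|\nabla_\Gamma u_{\Gke}\|_{H_\Gamma}^2$ and a cross term bounded by Young's inequality by $\tfrac{\kappa}{2}\|\nabla_\Gamma u_{\Gke}\|_{H_\Gamma}^2+\tfrac{\kappa}{2}\|\nabla_\Gamma u_\Gamma\|_{H_\Gamma}^2$; the first half is swallowed on the left, while $\kappa\int_0^t\|\nabla_\Gamma u_\Gamma\|_{H_\Gamma}^2\ds=O(\kappa)$ because $u_\Gamma\in L^2(0,T;V_\Gamma)$ by Corollary~\ref{cor3} under~(A6). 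The $\varepsilon$-cross term is handled by the harmonic-extension estimate $\varepsilon\int_\Omega|\nabla{\mathcal H}\mu_\Gamma|^2\dx\le\varepsilon C_{\mathcal R}^2\|\mu_\Gamma\|_{Z_\Gamma}^2$ (obtained as in~\eqref{idea} via the recovering operator~\eqref{recov}, \eqref{recover}) together with Young's inequality, and here $\varepsilon\int_0^T\|\mu_\Gamma\|_{Z_\Gamma}^2\ds=O(\varepsilon)$ follows from the uniform bound of Lemma~\ref{L4} passed to the limit. After absorbing the term $L_\Gamma\int_0^t\|\bar{u}_\Gamma\|_{H_\Gamma}^2\ds$ via the compactness inequality~\eqref{comp2}, the Gronwall lemma yields $\|\bar{u}\|_{L^\infty(0,T;H)\cap L^2(0,T;V)}^2+\cdots\le C(\varepsilon+\kappa)$, which is~\eqref{error3a}. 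Finally,~\eqref{error3b} follows by comparison from the time-integrated boundary equation, estimating $\|\bar{u}_\Gamma\|_{V_\Gamma'}$ through $\sqrt{\varepsilon}\,\|\nabla(1*\mu_{\ke})\|_H$ and $\|\nabla_\Gamma(1*\bar{\mu}_\Gamma)\|_{H_\Gamma}$.

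The main obstacle I anticipate is organizational rather than conceptual: one must keep the two independent smallness mechanisms from interfering, verifying in particular that the harmonic-extension test function remains admissible in $\boldsymbol{V}$ when the surface gradient term is simultaneously present, and that the limiting data $u_\Gamma\in L^2(0,T;V_\Gamma)$ and $\mu_\Gamma\in L^2(0,T;V_\Gamma)$ inherited from Corollary~\ref{cor3} and Lemma~\ref{L4} indeed furnish the $O(\kappa)$ and $O(\varepsilon)$ bounds \emph{uniformly} in the respective other parameter, so that the final constant $C$ in~\eqref{error3a}--\eqref{error3b} is independent of both $\varepsilon$ and $\kappa$ in $(0,1]$.
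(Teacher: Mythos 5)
Your proposal is correct and follows essentially the same route as the paper: the paper likewise derives the identity for $\int_\Gamma\bar{\mu}_\Gamma\bar{u}_\Gamma\dg$ by subtracting \eqref{main3-2} from the time-integrated boundary equation and testing with $({\mathcal H}\bar{\mu}_\Gamma,\bar{\mu}_\Gamma)$ as in Theorem~\ref{thm2c}, combines it with the Allen--Cahn difference tested by $(\bar{u},\bar{u}_\Gamma)$ as in Theorem~\ref{thm1c}, and controls the $\kappa$- and $\varepsilon$-cross terms by Young's inequality using $u_\Gamma\in L^2(0,T;V_\Gamma)$ from Corollary~\ref{cor3} and the bound \eqref{idea} with $\mu_\Gamma\in L^2(0,T;V_\Gamma)$, before concluding with \eqref{comp2}, Gronwall, and a comparison in the integrated boundary equation for \eqref{error3b}.
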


\begin{proof}
Take the difference between the weak form of the first equation in 
\eqref{ek3} \pier{supplied with the first equation in \eqref{ek7} for $({\rm P})_{\varepsilon\kappa}$
 and the corresponding ones for $(\rm P)$ (see~\eqref{e3} and~\eqref{e7}). Then we test by $\bar{u}$
 (which is possible thanks to Corollary~\ref{cor3}) and obtain
\begin{align*}
	&  \frac{1}{2} \frac{d}{dt} \int_\Omega |\bar{u}|^2 \dx 
	+ \int_\Omega | \nabla \bar{u} | ^2 \dx 
	+
	\int_\Omega \bar{\xi} \bar{u} \dx 
	+
	\langle \bar{\xi}_\Gamma, \bar{u}_\Gamma \rangle_{Z_\Gamma',Z_\Gamma}
	- \int_\Gamma \bar{\mu}_\Gamma
	\bar{u}_\Gamma \dg
	\notag \\
	\quad 
	& {}= {}
	- \kappa \int_\Gamma \nabla _\Gamma u_{\Gamma,\takeshi{\varepsilon, \kappa}} \cdot \nabla _\Gamma
	\bar{u}_\Gamma \dg
	- 
	\int_\Omega
	\bigl( 
	\pi(u_{\takeshi{\varepsilon, \kappa}})-\pi(u) \bigr) \bar{u} \dx
	\notag \\
	& \quad\ {}
	-
	\int_\Gamma 
	\bigl( \pi_\Gamma(u_{\Gamma,\takeshi{\varepsilon, \kappa}}) 
	- \pi_\Gamma (u_\Gamma) \bigr) \bar{u}_\Gamma \dg
\end{align*}
a.e.\ in $(0,T)$. 
This is exactly the same type of result as in Theorem~\ref{thm1c}. On the other hand, 
subtracting \eqref{main3-2} for $({\rm P})$ from \eqref{weak1} for $({\rm P})_{\varepsilon\kappa}$
and arguing as in the proof of Theorem~\ref{thm2c}, we deduce~\eqref{replace2}. Hence,
replacing the term $\int_\Gamma \bar{\mu}_\Gamma \bar{u}_\Gamma \dg$ in the above 
by~\eqref{replace2}, integrating the resultant over $(0,t)$ with respect to time, and adding $\int_0^t \| \bar{u}\|_H ^2 \ds$, we infer that} 
\begin{align*}
	& \frac{1}{2} \bigl\| \bar{u}(t) \bigr\|_H^2 
	+ 
	\int_0^t \| \bar{u} \|_V ^2 \ds 
	 + \frac{\varepsilon}{2} 
	\bigl\| \nabla (1*\mu_{\takeshi{\varepsilon, \kappa}})(t) \bigr\|_H ^2 
	\notag \\
	& \quad {}
	+ \frac{1}{2} 
	\bigl\| \nabla _\Gamma (1 * \bar{\mu}_{\Gamma})(t) \bigr\|_{H_\Gamma}^2
	+ \kappa \int_0^t\| \nabla _\Gamma u_{\Gamma,\takeshi{\varepsilon, \kappa}}\|_{H_\Gamma}^2 \ds 
	\notag \\
	& \le -  \varepsilon \int_0^t \bigl(
	\nabla (1*\mu_{\takeshi{\varepsilon, \kappa}}), \nabla ({\mathcal H} \mu_\Gamma) \bigr)_H \ds
	- \kappa \int_0^t 
	( 
	\nabla _\Gamma u_{\Gamma,\takeshi{\varepsilon, \kappa}}, \nabla _\Gamma
	u_\Gamma )_{H_\Gamma} \ds
	\notag \\
	& {} \quad + (1+L) \int_0^t \|\bar{u}\|_H^2 \ds
	+ L_\Gamma \int_0^t \|\bar{u}_\Gamma\|_{H_\Gamma}^2 \ds
	\notag \\
	& \le \frac{\varepsilon}{2} \int_0^t \bigl\|
	\nabla (1*\mu_{\takeshi{\varepsilon, \kappa}})\bigr\|_H^2 \ds
	+
	\frac{\varepsilon}{2} C_{\mathcal R}^2
	\int_0^t  \|\mu_{\Gamma} \|_{Z_\Gamma}^2 ds 
	+ \frac{\kappa}{2} \int_0^t 
	\|
	\nabla _\Gamma u_{\Gamma,\takeshi{\varepsilon, \kappa}}
	\|_{H_\Gamma}^2 \ds
		\notag \\
	& {} \quad
	+ 
	\frac{\kappa}{2} \int_0^t 
	\|
	\nabla _\Gamma
	u_\Gamma \|_{H_\Gamma}^2 \ds
	+ \pier{C} \int_0^t \|\bar{u}\|_H^2 \ds
	+ \frac{1}{2} \int_0^t \|\bar{u}\|_{V}^2 \ds
\end{align*}
for all $t \in [0,T]$, where we used the same \pier{bound}~\eqref{idea} as in the proof of Theorem \ref{thm3b}.  
The point of emphasis is the regularity of $u_\Gamma \in L^2(0,T;V_\Gamma)$ which is obtained 
in Corollary~\ref{cor3} under the additional assumption~{\rm (A6)}\pier{. In fact,} we deduce  
\begin{align*}
	&\frac{1}{2} \bigl\| \bar{u}(t) \bigr\|_H^2 
	+ \frac{1}{2}
	\int_0^t \| \bar{u} \|_V ^2 \ds 
	 + \frac{\varepsilon}{2} 
	\bigl\| \nabla (1*\mu_{\takeshi{\varepsilon, \kappa}})(t) \bigr\|_H ^2 
	\notag \\
	& \quad \pier{{}
	+ \frac{1}{2} 
	\bigl\| \nabla _\Gamma (1 * \bar{\mu}_{\Gamma})(t) \bigr\|_{H_\Gamma}^2
	+ \frac{\kappa}{2} \int_0^t\| \nabla _\Gamma u_{\Gamma,\takeshi{\varepsilon, \kappa}}\|_{H_\Gamma}^2 \ds }
	\notag \\
	& \le 
	{}\pier{C} \int_0^t \|\bar{u}\|_H^2 \ds
	+
	\frac{\varepsilon}{2} \int_0^t \bigl\|
	\nabla (1*\mu_{\takeshi{\varepsilon, \kappa}})\bigr\|_H^2 \ds
+
	\frac{\varepsilon}{2} C_{\mathcal R}^2 \| \mu_\Gamma \|_{L^2(0,T;V_\Gamma)}^2
	+ \frac{\kappa}{2} \| u_\Gamma \|_{L^2(0,T;V_\Gamma)}^2 
\end{align*}
for all $t \in [0,T]$. 
\pier{Hence the Gronwall lemma allows us to conclude the proof of~\eqref{error3a}. Then \eqref{error3b} can be derived as before with the help of \eqref{error-ek}.} 
\end{proof}

\section*{Acknowledgments}
\pier{This work was supported by the Research Institute for Mathematical Sciences, an International Joint Usage/Research Center located at Kyoto University. 
In addition, P.C.\ acknowledges the support of the Next Generation EU Project No.\ P2022Z7ZAJ (\emph{A unitary mathematical framework for modelling muscular dystrophies}) and of the GNAMPA (Gruppo Nazionale per l'Analisi Matematica, la Probabilit\`a e le loro Applicazioni) of INdAM (Istituto Nazionale di Alta Matematica). 
T.F.\ also acknowledges support from the JSPS KAKENHI Grant-in-Aid for Scientific Research (C), Japan, Grant Number 21K03309.}

\end{document}